\renewcommand{\O}{{\mathcal O}}
\newcommand{\rk}{{\rm rk}}
\newcommand{\Bs}{{\rm Bs}}
\newcommand{\sJ}{{\mathcal J}}
\newtheorem{lemma1}{}[section]
\newenvironment{lemma}{\begin{lemma1}{\bf Lemma.}}{\end{lemma1}}
\newenvironment{example}{\begin{lemma1}{\bf Example.}\rm}{\end{lemma1}}
\newenvironment{theorem}{\begin{lemma1}{\bf Theorem.}}{\end{lemma1}}
\newenvironment{proposition}{\begin{lemma1}{\bf Proposition.}}{\end{lemma1}}
\newenvironment{corollary}{\begin{lemma1}{\bf Corollary.}}{\end{lemma1}}
\newenvironment{remark}{\begin{lemma1}{\bf Remark.}\rm}{\end{lemma1}}
\newenvironment{definition}{\begin{lemma1}{\bf Definition.}}{\end{lemma1}}
\newenvironment{setup}{\begin{lemma1}{\bf Setup.}}{\end{lemma1}}
\newenvironment{conjecture}{\begin {lemma1}{\bf Conjecture.}}{\end{lemma1}}
\newenvironment{remark*}{{\bf Remark.}}{}
\newenvironment{example*}{{\bf Example.}}{}
\newcommand{\R}{\ensuremath{\mathbb{R}}}
\newcommand{\Q}{\ensuremath{\mathbb{Q}}}
\newcommand{\Z}{\ensuremath{\mathbb{Z}}}
\newcommand{\N}{\ensuremath{\mathbb{N}}}
\newcommand{\PP}{\ensuremath{\mathbb{P}}}
\newcommand{\merom}[3]{\ensuremath{#1\colon #2 \dashrightarrow #3}}
\newcommand{\holom}[3]{\ensuremath{#1\colon #2  \rightarrow #3}}
\newcommand{\fibre}[2]{\ensuremath{#1^{-1} (#2)}}
\newcommand\sE{{\mathcal E}}
\newcommand\sF{{\mathcal F}}
\newcommand\sG{{\mathcal G}}
\newcommand\sI{{\mathcal I}}
\newcommand\sO{{\mathcal O}}
\newcommand\Hom{{\rm Hom}}
\DeclareMathOperator{\nd}{nd}
\DeclareMathOperator{\Id}{Id}
\newcommand{\NE}[1]{ \ensuremath{ \overline { \mbox{NE} }(#1)} }
\title{Manifolds with nef anticanonical bundle} 
\date{May 5, 2013}
\author{Junyan Cao}
\author{Andreas H\"oring}
\subjclass[2000]{14D06, 14E30, 14J40, 32J25, 32J27, 32L20}
\keywords{anticanonical bundle, positivity of direct images, MMP}
\address{Junyan Cao, 
Universit{\'e} de Grenoble I, Institut Fourier, 38402 Saint-Martin d'H\`{e}res, France}
\email{junyan.cao@ujf-grenoble.fr}
\address{Andreas H\"oring, Universit{\'e} Pierre et Marie Curie, Institut de math{\'e}matiques de Jussieu,
Projet Topologie et g{\'e}om{\'e}trie alg{\'e}briques, Case 247,  4 place Jussieu, 75005 Paris, France}
\email{hoering@math.jussieu.fr}
\begin{document}

\begin{abstract}
Let $X$ be a compact K\"ahler manifold such that the anticanonical bundle $-K_X$ is nef.
A classical conjecture claims that the Albanese map $X \rightarrow T$ is submersive.
We prove this conjecture if the general fibre is a weak Fano manifold. If $X$ is projective
we prove the conjecture also for fibres of dimension at most two.
\end{abstract}

\maketitle

\section{Introduction}

Let $X$ be a compact K\"ahler manifold such that the anticanonical bundle $-K_X$ is nef, and 
let \holom{\pi}{X}{T} be the Albanese map. By the work of Zhang \cite{Zha96}, P\v aun \cite{Pau12}
and the first named author \cite{Cao13} we know that $\pi$ is a fibration, i.e. $\pi$ is surjective and has connected fibres.
The aim of this paper is to give evidence for the following:

\begin{conjecture} \cite{DPS94} \label{conjecturealbanese}
Let $X$ be a compact K\"ahler manifold such that  $-K_X$ is nef, and 
let \holom{\pi}{X}{T} be the Albanese map.
Then the fibration $\pi$ is smooth. 
If the general $\pi$-fibre is simply connected, the fibration $\pi$ is locally trivial in the analytic topology.
\end{conjecture}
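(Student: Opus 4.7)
The plan is to combine the canonical bundle formula with positivity of direct images, exploiting that the base $T$ is a torus. Since $K_T\equiv 0$ and $-K_X$ is nef, the relative anticanonical bundle $-K_{X/T}$ is nef, so a general fibre $F$ has nef anticanonical class. For the smoothness statement, the goal is to show that the discriminant locus $D\subset T$ of $\pi$ is empty; I would first reduce to a one-dimensional base by restricting to a general complete intersection curve $C\subset T$ meeting $D$ transversely, and aim for a contradiction from $C\cap D\neq\emptyset$.

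On the curve base, I would invoke the canonical bundle formula of Fujino-Mori, schematically $K_X+\Delta\sim_{\Q}\pi^{*}(K_T+B_T+M_T)$, where $B_T$ is the discriminant part (effective, supported on $D$) and $M_T$ is the semipositive moduli part. Combining $-K_X$ nef with $K_T\equiv 0$ forces $B_T+M_T$ to be anti-nef on $C$, hence numerically trivial there; since $M_T$ is pseudo-effective and $B_T$ is effective, both must vanish. This eliminates the contribution of multiple fibres, but not yet all singular ones. To rule out the latter, I would analyze $\pi_{*}\sO_X(-mK_{X/T})$ for $m$ such that $-mK_F$ is effective: any quotient with strictly positive first Chern class would, via the universal property of the Albanese, produce a non-trivial factorization $T\to A'$ through a positive-dimensional abelian quotient, contradicting surjectivity of $\pi_1(F)\to\pi_1(X)\to\pi_1(T)$ being trivial on the fibre. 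Hence $\pi_{*}\sO_X(-mK_{X/T})$ should be numerically flat, which combined with $B_T=0$ should force $\pi$ to be smooth in the weak Fano fibre case, and more delicate case-by-case arguments (using classification of surfaces with nef anticanonical bundle) should handle fibres of dimension at most two in the projective setting.

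For the second statement, smoothness together with simple connectedness of $F$ reduces local triviality to isotriviality. Here I would use that a simply connected compact K\"ahler manifold $F$ with $-K_F$ nef is expected to admit, possibly after a finite \'etale cover, a Beauville-Bogomolov-type decomposition into rationally connected and Calabi-Yau factors, each of which is rigid in an appropriate moduli-theoretic sense; the classifying map from the torus $T$ to the relevant moduli stack must then be constant, and Ehresmann's theorem upgrades this to local triviality in the analytic topology. The main obstacle I foresee is the positivity input in the middle step: the direct image theorems of Berndtsson-P\v aun type give positivity of $\pi_{*}\omega_{X/T}^{m}$, which is the \emph{opposite} direction from what is needed for $\pi_{*}\sO_X(-mK_{X/T})$. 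Establishing the required positivity outside the Fano-type or low-dimensional regimes seems to need genuinely new ingredients, which is likely why the paper proves only the partial results announced in the abstract.
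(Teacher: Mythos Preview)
The statement you are attempting to prove is stated in the paper as an open \emph{conjecture}, not a theorem; the paper proves only the special cases announced in Theorems \ref{theoremmain} and \ref{theoremmaintwo}. So there is no ``paper's own proof'' to compare against, and your closing paragraph correctly anticipates that a full proof is out of reach. That said, several steps of your outline would not go through even in the cases the paper does handle.

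First, your invocation of the Fujino--Mori canonical bundle formula is misplaced. That formula applies to lc-trivial fibrations, i.e.\ when $(K_X+\Delta)|_F \sim_{\Q} 0$; here the general fibre has $-K_F$ nef but typically \emph{not} numerically trivial (indeed in the weak Fano case $-K_F$ is big). There is no discriminant/moduli decomposition $B_T+M_T$ available in this setting, so the argument ``$-K_X$ nef forces $B_T=M_T=0$'' has no content. The paper's substitute for ruling out bad fibres over a curve is Lemma \ref{lemmaimportant}, which uses positivity of direct images of log-pluricanonical sheaves (Campana) rather than a canonical bundle formula.

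Second, your reduction to a general complete intersection curve $C\subset T$ presupposes that $T$ carries such curves. In the genuinely K\"ahler case the torus $T$ may contain no curves whatsoever, so this cut is unavailable; this is precisely why Section \ref{sectionnumericaldimension} and Subsection \ref{subsectionsemistable} of the paper develop analytic replacements (vanishing theorems via curvature estimates, and stable filtrations with respect to degenerate polarisations) that work without algebraic curves. Your outline would at best address the projective case.

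Third, the mechanism you propose for numerical flatness of $\pi_*\sO_X(-mK_{X/T})$ --- a $\pi_1$ factorisation through an abelian quotient --- is not how the paper proceeds and I do not see how to make it rigorous. The paper instead proves nefness of the direct image directly (Lemma \ref{lemmanef}, via a delicate gluing of Bergman-kernel metrics and \'etale pullbacks along isogenies of $T$), establishes separately that $\nd(-K_X)=\dim X-\dim T$ (Proposition \ref{propositionnumericaldimension}), and then combines these two facts in an intersection-theoretic argument (Lemma \ref{lemmanumericallyflat}) to force $c_1=0$. Your observation that Berndtsson--P\u aun positivity goes the wrong way is exactly the point: the nefness of $\pi_*(\omega_X^{\otimes -m})$ is genuinely hard and is one of the main technical contributions of the paper.

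Finally, for the locally trivial statement under simple connectedness of $F$, your moduli-theoretic sketch is in the right spirit but too optimistic: rigidity of rationally connected or Calabi--Yau factors is not known in the generality you invoke. The paper's argument (Proposition \ref{propositionloctriv}) is more concrete: once the direct images and the ideal sheaves $S_{m,d}$ are numerically flat, Simpson's correspondence makes them local systems, and the defining equations of the fibres are then locally constant. This step, however, only runs once one has the numerical flatness input, which again restricts to the weak Fano or low-dimensional fibre cases.
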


This conjecture has been proven under the 
stronger assumption that $T_X$ is nef or $-K_X$ is hermitian 
semipositive \cite{CP91, DPS93, DPS94, DPS96, CDP12}, but the general case is 
very much open: so far it is only known in the case where $q(X)=\dim X$, i.e. the Albanese map is birational \cite{Zha96, Fan06}. If $X$ is projective we also know that $\pi$ is equidimensional and has reduced fibres \cite{LTZZ10}.
In low dimension explicit computations based on the minimal model program (MMP) allow to say more:

\begin{theorem} \label{theoremPS} \cite[Thm.]{PS98} 
Let $X$ be a projective manifold such that  $-K_X$ is nef, and 
let \holom{\pi}{X}{T} be the Albanese map. If $\dim X \leq 3$, then $\pi$ is smooth.
\end{theorem}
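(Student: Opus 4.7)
The plan is to proceed by case analysis on $\dim T$, using that in the projective setting $\pi$ is already known to be an equidimensional fibration with reduced fibres by [LTZZ10]. The cases $\dim X \leq 2$ follow quickly from the classification of surfaces with nef anticanonical bundle, so the main work concentrates on $\dim X = 3$, which splits into the three subcases $\dim T \in \{1,2,3\}$.

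The case $\dim T = 3$ should go through cleanly. Here $T$ is an abelian threefold and $\pi$ is birational; writing $K_X = \pi^* K_T + \sum a_i E_i = \sum a_i E_i$ with $a_i > 0$ for the $\pi$-exceptional prime divisors, a nonzero term would make $K_X$ a nonzero effective divisor, and intersecting with $H^2$ for $H$ ample would contradict the nefness of $-K_X$. So there are no $\pi$-exceptional divisors; since $T$ is smooth, the exceptional locus of a birational morphism is either empty or pure of codimension one, so $\pi$ is finite and, having connected fibres, an isomorphism.

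In the case $\dim T = 2$, the generic fibre is a smooth curve with nef anticanonical bundle, hence $\PN^1$ or elliptic. I would rule out singular fibres in each subcase. For the elliptic case, the Kodaira canonical bundle formula $K_X \equiv \pi^*(K_T + L) + \sum \tfrac{m_i-1}{m_i} F_i$ with $L$ pseudo-effective, together with $K_T \equiv 0$, reducedness, and nefness of $-K_X$, should force $L \equiv 0$ and no singular fibres. For the rational case, a relative MMP over $T$ yields a conic bundle, whose discriminant can be controlled by $(-K_{X/T})^2 \cdot \pi^* H$ for $H$ ample on $T$, forcing it to be empty.

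The main obstacle is the case $\dim T = 1$, where $T$ is an elliptic curve and the fibres are surfaces. Here I would run a relative minimal model program over $T$. Since $-K_X$ is nef, every $K_X$-negative extremal ray is contracted to a point of $T$, so each MMP step is either a divisorial contraction or a flip whose exceptional locus lies in fibres, or a Mori fibre space contraction over $T$. I would verify that nefness of $-K_X$ is preserved at each step, using that the discrepancies of a smooth threefold together with $-K_X$ nef tightly constrain the contractible curves. The output is then either a relative Mori fibre space or a relative minimal model with $K_{X'/T} \equiv 0$; in both cases, the structure theory of fibrations of surfaces over an elliptic curve forces the resulting map to be smooth. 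The final step, which I expect to be the technical heart of the argument, is to show that each birational step of the MMP is an isomorphism in a neighbourhood of any fibre of $\pi$ that could be singular, so that smoothness of the output map descends to smoothness of the original Albanese map $\pi$.
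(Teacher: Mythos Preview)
This theorem is quoted from \cite{PS98}; the present paper does not give a self-contained proof of it but rather recovers it as a special case of Corollary~\ref{corollarymain} (for $q(X)=\dim X-1$) and Theorem~\ref{theoremmaintwo} (for $q(X)=\dim X-2$), together with the known birational case $q(X)=\dim X$. Your case split along $\dim T$ matches this structure, and your treatment of $\dim T=3$ is fine. The $\dim T=2$ sketch is plausible, though the paper's route there is quite different: for rational fibres it uses the positivity of the direct images $\pi_*(\omega_X^{\otimes -m})$ (Theorem~\ref{theoremmain}) rather than a relative MMP and discriminant count.

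The real problem is in your $\dim T=1$ case. Two of your assertions are incorrect and hide exactly the difficulty the paper (and \cite{PS98}) has to work hard to overcome. First, it is not true that the exceptional locus of each MMP step lies in fibres: divisorial contractions can contract a divisor onto a curve that surjects onto $T$ (cf.\ Lemma~\ref{lemmanefoverC}). Second, and more seriously, the nefness of $-K_X$ is \emph{not} preserved by the MMP in general. The introduction explicitly flags this: there is a rationally connected threefold $M$ with $-K_M$ nef whose blow-down $M'$ has $-K_{M'}$ not nef \cite[Ex.~4.8]{a8}. So the step ``verify that nefness of $-K_X$ is preserved at each step'' cannot be carried out directly.

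What the paper actually does in Section~\ref{sectiondimensiontwo} is to replace nefness by two weaker properties that \emph{are} preserved --- ``nef in codimension one'' and ``nef over $C$'' (Corollary~\ref{corollarynefoverC}) --- then prove a posteriori that $-K_{X_k/C}$ is nef on the terminal Mori fibre space (Proposition~\ref{propositionMMPnef}), and finally work backwards through the MMP using the restriction to an effective pluri-anticanonical divisor (Lemma~\ref{lemmanu2}, Proposition~\ref{propositionMMPtotal}). The existence of such a divisor is itself nontrivial and requires separate nonvanishing arguments (Lemmas~\ref{lemmanonvanishing} and \ref{lemmaGRR}). Your outline would need to incorporate this machinery, or something equivalent, to close the gap.
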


We prove Conjecture \ref{conjecturealbanese} when the general fibre is a weak Fano manifold:

\begin{theorem} \label{theoremmain}
Let $X$ be a compact K\"ahler manifold such that  $-K_X$ is nef, and 
let \holom{\pi}{X}{T} be the Albanese map.
Let $F$ be a general $\pi$-fibre. If $-K_F$ is nef and big, then $\pi$ is locally trivial
in the analytic topology.
\end{theorem}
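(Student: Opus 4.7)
My plan is to embed $X$ relatively over $T$ into a flat projective bundle coming from a hermitian flat direct image sheaf, and then deduce local triviality from the flat structure pulled back to the universal cover of $T$.

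First I would invoke the Kawamata--Shokurov base-point-free theorem on the weak Fano fibre $F$: for some $m_0$ and every $m$ divisible by $m_0$, the linear system $|{-}mK_F|$ is base-point-free and defines a birational morphism onto the anticanonical model of $F$. Fixing such an $m$, set $E_m := \pi_* \sO_X(-mK_{X/T})$; since $K_T = 0$ we have $-K_{X/T} = -K_X$ nef, and the positivity results for anticanonical direct images (Berndtsson--P\u aun, together with \cite{Cao13}) then show that $E_m$ is nef wherever it is locally free. Kawamata--Viehweg vanishing on the smooth fibres (applicable because $-K_{F_t}$ is nef and big) ensures that $E_m$ is locally free of rank $r_m = h^0(F, -mK_F)$ and commutes with base change over the open locus of smooth fibres.

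The crucial step is to promote nefness to hermitian flatness by showing $\det E_m$ is numerically trivial on $T$. Grothendieck--Riemann--Roch expresses $c_1(E_m)$ as a $\pi$-pushforward of polynomials in $c_1(-K_{X/T})$, and the input I want to exploit is the vanishing of intersections of $-K_X$ with top powers of pullbacks of ample classes on $T$: this should follow from $K_T = 0$ together with the fact that the restriction of $-K_X$ to the general fibre controls all the positivity (so that transverse directions from $T$ contribute trivially). The outcome should be that $c_1(\det E_m) \cdot H^{\dim T - 1} = 0$ for every ample class $H$ on $T$; combined with nefness on the abelian variety $T$ (and Hodge-index type estimates), this forces numerical triviality. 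Demailly--Peternell--Schneider \cite{DPS94} then upgrades $E_m$ to a hermitian flat vector bundle, corresponding to a unitary representation $\rho : \pi_1(T) \to U(r_m)$. Extracting this intersection-theoretic vanishing from the nef-anticanonical condition is the principal technical obstacle.

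Once $E_m$ is hermitian flat, $\mathbb{P}(E_m) \to T$ is analytically locally trivial, and on the universal cover $\mathbb{C}^{\dim T} \to T$ it becomes the product $\mathbb{P}^{r_m-1} \times \mathbb{C}^{\dim T}$. The relative linear system $|{-}mK_{X/T}|$ embeds the relative anticanonical model $Y$ of $X/T$ as a closed subvariety of $\mathbb{P}(E_m)$; pulling back to $\mathbb{C}^{\dim T}$ yields a $\rho$-equivariant holomorphic family of subvarieties of $\mathbb{P}^{r_m-1}$ with constant Hilbert polynomial. Since the corresponding Hilbert scheme is projective and $\mathbb{C}^{\dim T}$ is connected and Stein, this family is constant, so $Y \to T$ is analytically locally trivial. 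Finally, since $-K_F$ is nef and big, the birational contraction $X \to Y$ has rigid fibrewise structure (the $(-K_X)$-trivial extremal rays, viewed in the relative Chow or Hilbert scheme of $X$ over $Y$, vary holomorphically with the product structure on $Y$), so local triviality descends back to $\pi : X \to T$, which together with simple connectedness of the Fano-type fibres completes the proof.
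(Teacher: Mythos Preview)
Your overall architecture matches the paper's, but two steps contain genuine gaps.

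\textbf{The Hilbert scheme argument does not work.} A holomorphic map from $\mathbb{C}^{\dim T}$ to a projective variety is \emph{not} constant in general (e.g.\ $z\mapsto [1:z]$ from $\mathbb{C}$ to $\mathbb{P}^1$), so ``Stein and connected'' is not enough. Concretely: if $T$ is an elliptic curve and $E_m$ is trivial, then $\mathbb{P}(E_m)=T\times\mathbb{P}^{r-1}$, and nothing prevents a non-constant family of hypersurfaces in $\mathbb{P}^{r-1}$ parametrised by $T$. Equivariance under a unitary $\rho$ does not help either. The paper closes this gap by showing that the \emph{ideal sheaf} direct images $S_{m,d}:=(\pi')_*\bigl(\sI_{X'}\otimes\sO_{\mathbb{P}(E_m)}(d)\bigr)$ are themselves numerically flat (via the exact sequence $0\to S_{m,d}\to S^dE_m\to E_{md}\to 0$ and numerical flatness of both outer terms), so that the inclusion $S_{m,d}\hookrightarrow S^dE_m$ is a global section of a numerically flat bundle and hence parallel. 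This is what forces the defining equations of the fibres to be locally constant; it is not a consequence of the flatness of $E_m$ alone.

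\textbf{The GRR argument for $c_1(E_m)=0$ is incomplete.} The Todd class of $T_{X/T}$ involves $c_2(T_{X/T})$ and higher Chern classes, so $c_1(E_m)$ is \emph{not} a push-forward of a polynomial in $c_1(-K_X)$ alone; you cannot conclude just from $\nd(-K_X)\le n-r$. (You also invoke ``every ample class $H$ on $T$'', but $T$ need not be projective.) The paper's route is: first establish $\nd(-K_X)=n-r$ via a K\"ahler Kawamata--Viehweg type vanishing theorem (this is the content of Section~\ref{sectionnumericaldimension} and is already nontrivial when $T$ is not algebraic); then argue by contradiction in Lemma~\ref{lemmanumericallyflat}, using the structure of nef classes on a torus to reduce to a curve base, a projective-bundle blow-up computation (Lemma~\ref{lemmaelementary}), and the projectively flat filtration of Proposition~\ref{propositionfiltration}, to show that $c_1(E_m)\neq 0$ would force $(-K_X)^{n-r+1}\neq 0$.

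Two smaller points you should also address: local freeness of $E_m$ on all of $T$ (not just over the smooth locus) requires first proving that the relative anticanonical fibration $\pi'$ is flat, which the paper deduces from $\nd(-K_X)=n-r$; and the descent from $X'$ to $X$ needs the finiteness of terminal models (BCHM) plus a minimal-model uniqueness argument, rather than an appeal to ``rigid fibrewise structure''.
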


Let us explain the strategy of proof under the stronger assumption that $-K_X$ is $\pi$-ample:
for $m \gg 0$ we have an embedding 
$$
X \hookrightarrow \PP(\pi_* (\omega_X^{\otimes -m})).
$$
The main technical point is to 
show that the direct image sheaf $\pi_* (\omega_X^{\otimes -m})$ is a nef vector bundle and $(-K_X)^{\dim X-\dim T+1}=0$.
If $X$ is projective this is not very difficult, the non-algebraic case needs substantially more effort and should be
of independent interest.  
Combining these two facts an intersection computation shows that $\pi_* (\omega_X^{\otimes -m})$ is actually numerically flat, i.e.
nef and antinef. Yet a numerically flat vector bundle is a rather special 
local system \cite[Sect.3]{Sim92}, so an argument from \cite{Cao12} 
allows to show that the equations of the fibres $X_t \subset  \PP(\pi_* (\omega_X^{\otimes -m})_t)$ do 
not depend on $t \in T$. In particular all the fibres are isomorphic, so $\pi$ is locally trivial.
If $-K_X$ is only nef and $\pi$-big, the same considerations show that the relative anticanonical fibration $X' \rightarrow T$
is locally trivial. We then use birational geometry to deduce that $X \rightarrow T$ is also locally trivial.
Theorem \ref{theoremmain} immediately implies:

\begin{corollary} \label{corollarymain}
Let $X$ be a compact K\"ahler manifold such that $-K_X$ is nef. 
Conjecture \ref{conjecturealbanese} holds if $q(X) = \dim X-1$. 
\end{corollary}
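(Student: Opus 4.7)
The plan is to reduce to Theorem \ref{theoremmain} by analyzing the general fibre of the Albanese map. Since $\dim T = q(X) = \dim X - 1$, the general $\pi$-fibre $F$ is a smooth connected curve. As $T$ is a torus, its tangent bundle is trivial, so the normal bundle $N_{F/X}$ is also trivial, and adjunction gives $K_F \equiv K_X|_F$. In particular $-K_F$ is nef on the curve $F$, forcing $F\cong \PP^1$ or $F$ an elliptic curve.

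In the first case, $-K_F \cong \sO_{\PP^1}(2)$ is nef and big, so $F$ is a weak Fano manifold and Theorem \ref{theoremmain} applies directly: the fibration $\pi$ is locally trivial in the analytic topology. Since $\PP^1$ is simply connected, this verifies both parts of Conjecture \ref{conjecturealbanese} at once.

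In the elliptic-fibre case the fibres are not simply connected, so one only has to prove smoothness of $\pi$. My plan here is to use the canonical bundle formula of Kawamata/Fujino/Ambro for the fibration $\pi$ with general fibre of Kodaira dimension zero, giving $K_X \equiv \pi^*(M+B)$, where $M$ is the semi-positive moduli part and $B$ is the effective boundary part supported on the discriminant locus. Since $K_T \equiv 0$ and $-K_X$ is nef, one gets $M+B\equiv 0$, hence $B=0$ (no multiple fibres) and $M\equiv 0$. Combined with the positivity of $\pi_*\omega_{X/T}$ (Viehweg) and the vanishing $h^0(T, R^1\pi_*\sO_X)=0$ forced by $q(X)=\dim T$ through the Leray spectral sequence, smoothness of $\pi$ should follow.

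The hard part is the elliptic case, which lies outside the scope of Theorem \ref{theoremmain}: when $-K_F\equiv 0$ the anticanonical bundle is not $\pi$-big, so the embedding of $X$ into $\PP(\pi_*(\omega_X^{\otimes -m}))$ used in the main argument is not available, and the reduction has to be replaced by the canonical bundle formula together with Viehweg positivity and monodromy considerations on the variation of Hodge structure of the elliptic fibration over the abelian variety $T$.
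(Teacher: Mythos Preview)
Your treatment of the $\PP^1$-fibre case is correct and matches the paper exactly: once the general fibre is $\PP^1$, the anticanonical bundle is $\pi$-big and Theorem \ref{theoremmain} applies.

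For the elliptic-fibre case, however, your approach is both more complicated than necessary and has genuine gaps. The canonical bundle formula you invoke (Kawamata/Fujino/Ambro) is established for \emph{projective} morphisms; here $X$ is only compact K\"ahler and $T$ may be a non-algebraic torus, so those references do not apply directly. Even granting the formula, your subsequent plan --- deducing smoothness from $B=0$, $M\equiv 0$, Viehweg positivity, and a Leray computation --- is only a sketch. In particular, $B=0$ only controls multiple fibres in codimension one, and the Leray argument shows at best that $d_2$ is injective on $H^0(T,R^1\pi_*\sO_X)$, not that this group vanishes; degeneration of the spectral sequence is not automatic when $\pi$ is not known to be smooth.

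The paper's argument is far shorter. Since the general fibre is an elliptic curve, one has $\kappa(F)=0$ and $\kappa(T)=0$; the subadditivity $\kappa(X)\geq \kappa(F)+\kappa(T)$ (the $C_{n,n-1}$ conjecture) is known in the K\"ahler setting by Ueno \cite[Thm.~2.2]{Uen87}, giving $\kappa(X)\geq 0$. But $-K_X$ is nef, so any effective pluricanonical divisor must be zero, hence $K_X\equiv 0$. One then concludes by the Beauville--Bogomolov decomposition theorem, which makes the Albanese map smooth. Note that your own computation, if the canonical bundle formula were available, already yields $K_X\equiv \pi^*(M+B)\equiv 0$; at that point you should simply invoke Beauville--Bogomolov rather than the ad hoc argument you outline.
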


This also settles the problem in low dimension.

\begin{corollary} \label{corollarykaehler}
Let $X$ be a compact K\"ahler manifold such that $-K_X$ is nef. 
Conjecture \ref{conjecturealbanese} holds if $\dim X \leq 3$.
\end{corollary}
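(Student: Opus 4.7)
Let $n=\dim X\le 3$ and $q=q(X)=\dim T$. The extreme cases are immediate: if $q=0$ there is nothing to prove; if $q=n$ the Albanese map is generically finite and by \cite{Zha96,Fan06} is an isomorphism onto $T$; and if $q=n-1$ the statement is Corollary~\ref{corollarymain}. The only combination not covered by these reductions is $(n,q)=(3,1)$.

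I would focus on this remaining case, where $T$ is an elliptic curve and the general $\pi$-fibre $F$ is a compact K\"ahler surface with trivial normal bundle in $X$; adjunction then gives that $-K_F=-K_X|_F$ is nef. The plan is to run a case analysis on the numerical type of $-K_F$ using the Enriques-Kodaira classification.

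First, if $(-K_F)^2>0$ then $-K_F$ is nef and big, so $F$ is weak Fano and Theorem~\ref{theoremmain} immediately gives local triviality of $\pi$. If $-K_F\equiv 0$, then $F$ is a K3 surface, a complex $2$-torus, an Enriques surface, or a bielliptic surface; I would bootstrap this to $K_X\equiv 0$ on all of $X$ using that $-K_X$ is nef, restricts trivially to the general fibre, and $K_{X/T}=K_X$ is pseudoeffective by the positivity theorems for direct images of fibrations with fibres of Kodaira dimension $0$. With $K_X$ numerically trivial, the K\"ahler version of the Beauville-Bogomolov decomposition and the known structure of K\"ahler threefolds with trivial canonical class then yields local triviality of the Albanese. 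Finally, if $(-K_F)^2=0$ but $-K_F\not\equiv 0$, the classification forces $F=\PP(\E)$ for $\E$ a rank-$2$ semistable bundle of odd degree on an elliptic curve, so $F$ is projective and ruled; I would exploit this ruling to construct a second fibration $X\dashrightarrow Y$ whose general fibre is $\PP^1$ and apply the numerical flatness / direct image technique underlying Theorem~\ref{theoremmain} to this auxiliary fibration.

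The main obstacle is the last case: the MMP-style argument of \cite{PS98} is inherently projective, so one must either extend it to the K\"ahler setting or first show that $X$ is Moishezon, hence projective (as $X$ is K\"ahler of dimension $3$), thereby reducing to Theorem~\ref{theoremPS}. Proving projectivity is arguably the more natural route, since the presence of a ruling on $F$ together with the fibration $X\to T$ heuristically provides enough divisors on $X$ to trigger Moishezon's criterion.
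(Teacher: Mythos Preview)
Your overall shape is right, and the reductions to $(n,q)=(3,1)$ are fine, but the case analysis in that remaining situation has a genuine gap.

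Your third case is misstated. If $-K_F$ is nef with $(-K_F)^2=0$ and $-K_F\not\equiv 0$, the classification does \emph{not} force $F$ to be a $\PP^1$-bundle over an elliptic curve: $F$ can equally well be a rational surface, namely a blow-up of $\PP^2$ in nine points in almost general position. (The ``odd degree'' claim is also unnecessary and not correct in any case.) This rational case is exactly the one where the full two-dimensional-fibre machinery of Theorem~\ref{theoremmaintwo} is needed, and your plan to ``prove projectivity via Moishezon's criterion from the ruling'' does not apply here because there is no elliptic base producing extra divisors. The paper closes this gap with a single observation you are missing: if the general fibre $F$ is rationally connected, then $R^j\pi_*\sO_X=0$ for $j>0$, hence $H^2(X,\sO_X)=0$ and the compact K\"ahler manifold $X$ is automatically projective. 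One can then invoke Theorem~\ref{theoremmaintwo} directly. So the paper's organising principle is not the numerical type of $-K_F$ but rather: projective versus non-projective, and in the non-projective case, whether $F$ is uniruled or not.

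Your handling of the elliptic-ruled subcase is also incomplete. There is no reason why such an $X$ should be Moishezon, so the projectivity route may simply fail. The paper instead argues via the MRC fibration (Remark~\ref{remarkeasy}): if $F$ is uniruled but not rationally connected, the base of the MRC fibration has $\kappa=0$, and after an \'etale base change one gets $q(X')=\dim X-1$, reducing to Corollary~\ref{corollarymain}, which is already established in the K\"ahler setting. Finally, in your second case ($-K_F\equiv 0$) the appeal to ``positivity of direct images'' is vague in the K\"ahler category; the clean reference is Brunella \cite{Bru06}, which gives pseudoeffectivity of $K_X$ directly from non-uniruledness.
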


In the second part of the paper we turn our attention to the case where the positivity of $-K_X$ is not strict, even
along the general $\pi$-fibre. We use the MMP to prove Conjecture \ref{conjecturealbanese} for fibres of low dimension.

\begin{theorem} \label{theoremmaintwo}
Let $X$ be a projective manifold such that  $-K_X$ is nef.
Conjecture \ref{conjecturealbanese} holds if $q(X) = \dim X-2$. 
\end{theorem}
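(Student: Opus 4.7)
Set $n = \dim X$ and let $\pi \colon X \to T$ be the Albanese map, so that $\dim T = q(X) = n-2$ and a general fibre $F$ of $\pi$ is a smooth projective surface. Since $K_T \equiv 0$ and $-K_X$ is nef, adjunction gives that $-K_F$ is nef on the smooth general fibre. The plan is to exploit the very restrictive Enriques--Kodaira classification of smooth projective surfaces with nef anticanonical bundle, combined with Theorem \ref{theoremmain} and a relative minimal model program over $T$, to deduce that $\pi$ is smooth (and locally trivial when a general $\pi$-fibre is simply connected).

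First I would invoke \cite{LTZZ10}: in the projective setting, $\pi$ is equidimensional with reduced fibres; as $T$ is smooth this makes $\pi$ flat, so it suffices to prove that every scheme-theoretic fibre is smooth. I would then stratify according to the numerical dimension $\nu(-K_F) \in \{0,1,2\}$, which is constant on a very general fibre. If $\nu(-K_F) = 2$, then $-K_F$ is nef and big and $F$ is a weak del Pezzo surface, so Theorem \ref{theoremmain} gives the local triviality of $\pi$ immediately.

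For $\nu(-K_F) \in \{0,1\}$ the classification restricts $F$ to one of: (a) a minimal surface with $K_F \equiv 0$ (abelian, K3, Enriques or bielliptic); (b) a minimal ruled surface $\PP(E) \to C$ over a curve of genus $\le 1$ with Atiyah-type $E$, or a minimal properly elliptic surface with numerically trivial canonical bundle; or (c) a blow-up of one of the above keeping $-K$ nef. In each case $F$ carries an additional fibration (its Albanese map, the ruling, or the elliptic fibration), and the plan is to globalise this structure to an intermediate factorisation $X \dashrightarrow Y \to T$ with $\dim T < \dim Y < \dim X$. The globalisation is carried out via a relative MMP over $T$: because $-K_X$ is globally nef, every extremal ray contracted in the relative program is $K_X$-trivial, so the contracted loci are covered by rational curves on which $-K_X$ vanishes, and the construction is compatible with the Iitaka--Moishezon fibration of $-K_X$. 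One then reaches either a relative Mori fibre space $X' \to Y \to T$ with $\dim Y = n-1$, in which case Corollary \ref{corollarymain} applies to $Y \to T$ and Theorem \ref{theoremmain} applies to the Fano fibration $X' \to Y$; or a relative minimal model with $K_{X'/T} \equiv 0$, in which case the numerical flatness of $\pi'_{*}(\omega_{X'/T}^{\otimes m})$, furnished by the same direct-image positivity used to prove Theorem \ref{theoremmain}, forces $X'/T$ to be isotrivial over the abelian base $T$.

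The step I expect to be hardest is the globalisation of the auxiliary fibration on non-minimal or blown-up fibres in cases (b) and (c): the fibration of a general $F$ must be shown to deform with $F$ over $T$, to descend to a genuine morphism on some birational model of $X$, and to acquire no multiple or non-reduced fibres along the way. Once this globalisation is secured, local triviality is transported from $X'$ back to $X$ across the $K$-trivial flops and divisorial contractions of the MMP, since these all leave the relative structure over $T$ unchanged; combining this with the already-established Theorem \ref{theoremmain} and Corollary \ref{corollarymain} closes the argument.
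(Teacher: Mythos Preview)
Your plan contains a decisive error in the MMP step. You write that ``because $-K_X$ is globally nef, every extremal ray contracted in the relative program is $K_X$-trivial''. This is false: nefness of $-K_X$ says $K_X \cdot R \le 0$ for every curve class $R$, so the $K_X$-negative extremal rays you contract satisfy $-K_X \cdot R > 0$, not $-K_X \cdot R = 0$. Worse, nefness of the anticanonical divisor is \emph{not} preserved by the MMP: the paper's introduction recalls an explicit rationally connected threefold $M$ with $-K_M$ nef whose blow-down $M'$ along a smooth rational curve $B$ has $-K_{M'}\cdot B < 0$. Hence on the output $X'$ of your relative MMP you have no reason to expect $-K_{X'}$ nef, and therefore neither Theorem~\ref{theoremmain} nor Corollary~\ref{corollarymain} applies to $X' \to Y$ or $Y \to T$. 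The ``transport back across $K$-trivial flops and divisorial contractions'' in your last paragraph likewise fails, since those steps are not $K$-trivial.

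The paper's route is quite different and is engineered precisely to get around this obstruction. First, via Corollary~\ref{corollaryreductioncurve} it reduces to a fibration $X_C \to C$ over a curve (cut out by a general curve through any given point of $T$); this is essential because the weaker positivity notions used below are only workable over a curve. Second, it isolates two properties that \emph{are} preserved by the threefold MMP over $C$: being nef in codimension one and being nef on horizontal curves (Corollary~\ref{corollarynefoverC}). Third, it does substantial work (Lemma~\ref{lemmanu2}, Lemma~\ref{lemmapreparation}, Proposition~\ref{propositionMMPnef}) to upgrade these to genuine nefness of $-K_{X_k/C}$ on the Mori fibre space at the end. A further ingredient you do not mention is nonvanishing: one needs an effective divisor $A_0 \equiv -mK_{X_C/C}$ to control the MMP (Proposition~\ref{propositionMMPtotal}), and this is supplied by Lemmas~\ref{lemmanonvanishing} and \ref{lemmaGRR}. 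Your classification-and-globalise strategy does not provide a substitute for any of these steps.
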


The basic idea of the proof is very simple: find a Mori contraction $\mu: X \rightarrow Y$ onto 
a projective manifold $Y \rightarrow T$ such that $-K_{Y}$ is nef and relatively big.
Then $-K_X-\mu^* K_Y$ is nef and relatively big, using the birational morphism
$$
X \rightarrow X' \subset \PP(\pi_* (\omega_X^{\otimes -m} \otimes \mu^* \omega_Y^{\otimes -m})).
$$
we can prove as in Theorem \ref{theoremmain} that $X \rightarrow T$ is locally trivial.
Unfortunately it is a priori not clear that such a contraction $X \rightarrow Y$ exists.
In fact the second named author constructed an example of a 
(rationally connected) projective threefold $M$ such that $-K_M$ is nef and not big
and $M=\mbox{Bl}_B M'$ with $B$ a smooth rational curve such that $-K_{M'} \cdot B<0$ \cite[Ex.4.8]{a8}. 
This problem already appeared in the work of Peternell and Serrano and we follow
the same strategy to overcome this difficulty: let $X'/T$ be a Mori fibre space birational to $X/T$,
then try to prove that $-K_{X'}$ is nef. Once this property is established one can
describe precisely all the steps of the MMP $X \dashrightarrow X'$.
The contribution of this paper is to introduce a new method to establish this kind of statement:
our proof is based on the idea that if we restrict the MMP to some (pluri-)anticanonical divisor $D' \subset X'$,
the numerical dimension of $-K_{X'}|_{D'}$ is zero or one. This observation quickly leads to strong
restrictions on the MMP in a neighbourhood of $D'$, cf. Lemma \ref{lemmanu2}.
The main point is thus to show the existence of global sections of $-m K_X$ for some $m \in \N$: 
this can be done on threefolds, but is completely open in higher dimension.

{\bf Acknowledgements.} We thank I.Biswas, S.Boucksom, T.Dinh, V. Lazi\'c, W.Ou, T. Sano and C.Simpson for helpful communications.
We thank J-P. Demailly for helpful discussions and numerous suggestions.
A. H\"oring was partially supported by the A.N.R. project CLASS\footnote{ANR-10-JCJC-0111}.

\begin{center}
{\bf
Notation and terminology
}
\end{center}

For general definitions - at least in the algebraic context -  we refer to Hartshorne's book \cite{Har77}.
We will frequently use standard terminology and results 
of the minimal model program (MMP) as explained in \cite{KM98} or \cite{Deb01}.
Manifolds and varieties will always be supposed to be irreducible.
A fibration is a proper surjective map with connected fibres \holom{\varphi}{X}{Y} between normal varieties.

Let us recall the various positivity concepts that will be used in this paper.

\begin{definition} \label{definitionnef} \cite{Dem12}
Let $(X, \omega_{X} )$ be a compact K\"ahler manifold, and let $\alpha \in H^{1,1}(X) \cap H^2(X, \R)$ be a real cohomology class 
of type $(1,1)$. We say that $\alpha$ is nef if for every $\epsilon> 0$, there is a smooth $(1,1)$-form $\alpha_{\epsilon}$
in the same class of $\alpha$ such that $\alpha_{\epsilon}\geq -\epsilon\omega_{X}$.

We say that $\alpha$ is pseudoeffective if there exists a $(1, 1)$-current $T\geq 0$ in the same class of $\alpha$.
We say that $\alpha$ is big if there exists a $\epsilon> 0$ such that $\alpha-\epsilon \omega_{X}$ is pseudoeffective.
\end{definition}

\begin{definition} \label{definitionrelativebig}
Let $\alpha$ be a nef class on a compact K\"ahler manifold $X$, and let $\pi: X\rightarrow T$ be a fibration.
We say that $\alpha$ is $\pi$-big if for a general fibre $F$, the restriction $\alpha|_F$ is big.
\end{definition}

\begin{definition} \cite[Def 6.20]{Dem12} \label{definitionnumericaldimension}
Let $X$ be a compact K\"ahler manifold, and let $\alpha \in H^{1,1}(X) \cap H^2(X, \R)$ be a real cohomology class 
of type $(1,1)$. Suppose that $\alpha$ is nef.
We define the numerical dimension of $\alpha$ by 
$$
\nd (\alpha) :=
\max \{k \in \N \ | \ \alpha^{k}\neq 0 \mbox{ in } H^{2k}(X,\mathbb{R})\}.
$$
\end{definition}

\begin{remark} \label{remarknumericaldimension} In the situation above, set $m=\nd (\alpha)$.
By \cite[Prop 6.21]{Dem12} the cohomology class $\alpha^{m}$ can be represented 
by a non-zero closed positive $(m,m)$-current $T$.
Therefore 
$\int_X \alpha^{m}\wedge\omega_{X}^{\dim X - m}\neq 0$ for any K\"ahler class $\omega_{X}$.
\end{remark}

\begin{definition} \label{definitionnefcodimone}
Let $M$ be a projective variety, and let $L$ be a $\Q$-Cartier divisor on $M$. We say that $L$
is nef in codimension one if $L$ is pseudoeffective and for every prime divisor $D \subset M$,
the restriction $L|_D$ is pseudoeffective.
\end{definition}

\begin{remark} \label{remarknefcodimone}
If $M$ is a normal projective variety and $L$ a $\Q$-Cartier divisor which is nef 
in codimension one, then $L^2$ is a pseudoeffective cycle, i.e. a limit of effective cycles of codimension two.
Indeed if 
$L= \sum \lambda_j D_j +N$ is the divisorial Zariski decomposition \cite{Bou04, Nak04}, we have
$$
L^2 = \sum \lambda_j L|_{D_j} + L \cdot N.
$$
By hypothesis the restriction $L|_{D_j}$ is pseudoeffective, so a limit of effective divisors on $D_j$.
The class $N$ is modified nef in the sense of \cite{Bou04}, so its intersection with any pseudoeffective divisor
gives a pseudoeffective cycle. 
\end{remark}

\begin{definition} \cite{Miy87} \label{definitiongenericallynef}
Let $X$ be a normal, projective variety of dimension $n$, and let 
$\sF$   be a torsion free coherent sheaf on $X$. We say that $\sF$ is generically nef with
respect to a polarisation $A$ on $X$ 
if $\sF|_C$ is nef where
\[
C := D_1 \cap \ldots \cap D_{n-1}
\]
with $D_j \in | m_j A |$ general and $m_j \gg 0$. 
\end{definition}

\section{Numerical dimension} \label{sectionnumericaldimension}

In this section we give an upper bound for the numerical dimension of $-K_X$:

\begin{proposition} \label{propositionnumericaldimension}
Let $X$ be a compact K\"ahler manifold of dimension $n$ such that  $-K_X$ is nef, and 
let \holom{\pi}{X}{T} be the Albanese map. Set $r:=\dim T$.
If $-K_X$ is $\pi$-big, we have $\nd (-K_X)=n-r$.
\end{proposition}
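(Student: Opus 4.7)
The plan is to handle the two inequalities separately.

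For the easy direction $\nd(-K_X) \ge n-r$: a general $\pi$-fibre $F$ has dimension $n-r$ and $-K_X|_F = -K_F$ is big and nef by the $\pi$-bigness hypothesis, so $(-K_F)^{n-r}>0$. Pairing with the pullback of the top K\"ahler class on $T$ and applying Fubini on the submersive locus of $\pi$,
$$
\int_X (-K_X)^{n-r}\wedge\pi^*\omega_T^{r} \;=\; \Bigl(\int_T\omega_T^{r}\Bigr)\cdot(-K_F)^{n-r}\;>\;0,
$$
so $(-K_X)^{n-r}$ is a nonzero class in $H^{2(n-r)}(X,\R)$.

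For the bound $\nd(-K_X)\le n-r$, I need $(-K_X)^{n-r+1}=0$ in $H^{2(n-r+1)}(X,\R)$; by Remark \ref{remarknumericaldimension} it suffices to produce a single K\"ahler class $\omega$ on $X$ with $\int_X(-K_X)^{n-r+1}\wedge\omega^{r-1}=0$. My strategy is to extract information from the direct image sheaf $E_m:=\pi_*(\omega_X^{\otimes -m})$ for $m\gg 0$. This is a nonzero torsion-free sheaf on $T$ whose rank grows like $m^{n-r}(-K_F)^{n-r}/(n-r)!$, and which by the theorems of P\u aun \cite{Pau12} and Cao \cite{Cao13} is nef on $T$. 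Using $K_{X/T}=K_X$ (since $K_T=0$) and applying Grothendieck--Riemann--Roch on the smooth locus of $\pi$, extended by continuity via properness and Kodaira vanishing on fibres for $m\gg 0$, one obtains the asymptotic
$$
c_1(E_m) \;=\; \frac{m^{n-r+1}}{(n-r+1)!}\,\pi_*\bigl((-K_X)^{n-r+1}\bigr)\;+\;O(m^{n-r}) \qquad (m\to\infty).
$$
I would then combine the nefness of $E_m$ on $T$ with the specific structure of the embedding $X\hookrightarrow\PP(E_m)$ (via an asymptotic volume/intersection comparison involving the Hilbert polynomial of $X$ relative to $\PP(E_m)$ and the fact that $-K_X$ is not big globally, since $r>0$) to force $\pi_*\bigl((-K_X)^{n-r+1}\bigr)=0$ in $H^{1,1}(T,\R)$.

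The main obstacle is then to promote this pushforward vanishing to the vanishing of the class $(-K_X)^{n-r+1}$ on $X$ itself. Here the torus structure of $T$ is decisive: using translation-invariant K\"ahler forms on $T$ and the fact that the subtori $T'\subset T$ together with the projections $T\to T/T'$ let one isolate arbitrary cohomology directions, we factor $\pi$ through quotient fibrations and repeat the pushforward analysis to kill all pairings of $(-K_X)^{n-r+1}$ with K\"ahler $(r-1,r-1)$-classes on $X$. In the projective setting, the equidimensionality result \cite{LTZZ10} streamlines this step considerably; in the full compact K\"ahler case the required analysis of closed positive currents and their behaviour under the torus group action is significantly more delicate, corresponding to the ``substantially more effort'' alluded to in the introduction.
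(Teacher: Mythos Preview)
Your approach diverges sharply from the paper's, and the central step is a genuine gap rather than a different route.

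The paper argues by contradiction via a cohomological vanishing theorem: assuming $\nd(-K_X)\ge n-r+1$, it proves a K\"ahler Kawamata--Viehweg type statement (Theorem~\ref{KVvanishing}) showing $H^r(X,K_X\otimes(-K_X))=H^r(X,\sO_X)=0$, which contradicts the pullback of a nonzero holomorphic $r$-form from the Albanese torus. The work lies in building the vanishing theorem from singular metrics and curvature eigenvalue estimates (Proposition~\ref{lemmavanishing}, Lemma~\ref{lemmavanishing3}), using Lemma~\ref{lemmanumericaldimension} to show $\pi_*(c_1(-K_X)^{n-r+1})$ is a nonzero semipositive class and then exploiting the torus structure to fibre $T$ over an abelian variety.

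Your proposal instead tries to show $\pi_*((-K_X)^{n-r+1})=0$ directly from properties of $E_m=\pi_*(\omega_X^{\otimes -m})$. Two problems. First, the nefness of $E_m$ is not in \cite{Pau12} or \cite{Cao13}; those papers establish that the Albanese map is a fibration. Nefness of $E_m$ is Lemma~\ref{lemmanef} here, proved by delicate gluing of Bergman-kernel-type metrics, and the setup (Lemma~\ref{lemmaflat}) already invokes Proposition~\ref{propositionnumericaldimension} to get flatness of $\pi'$, so you risk circularity. Second, and more seriously, the step ``combine nefness of $E_m$ with the embedding $X\hookrightarrow\PP(E_m)$ and non-bigness of $-K_X$ to force $\pi_*((-K_X)^{n-r+1})=0$'' is not an argument. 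Nefness of $E_m$ gives only $c_1(E_m)\cdot\omega_T^{r-1}\ge 0$, which is the wrong inequality; non-bigness gives $(-K_X)^n=0$, which is much weaker than $(-K_X)^{n-r+1}=0$. In the paper the implication runs the \emph{other} way: Lemma~\ref{lemmanumericallyflat} uses Proposition~\ref{propositionnumericaldimension} (i.e.\ $(-K_X)^{n-r+1}=0$) as input to deduce $c_1(E_m)=0$, via Lemma~\ref{lemmaelementary}. You have not indicated any mechanism that would reverse this.

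A minor remark: your ``second step'' is not needed even on your own terms. If you could show $\int_X(-K_X)^{n-r+1}\wedge(\pi^*\omega_T)^{r-1}=0$, then Lemma~\ref{lemmanumericaldimension} (contrapositive, via the Hovanskii--Teissier inequality in Appendix~\ref{appendixinequality}) already forces $\nd(-K_X)\le n-r$; no promotion from pushforward vanishing to full vanishing is required.
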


\begin{remark} \label{remarkprojective} If the torus $T$ is projective, this statement is well-known: in this case the manifold $X$ is 
also projective, so
if $\nd(-K_X)>n-r$ we can apply Kawamata-Viehweg vanishing \cite[6.13]{Dem00} to see that 
\begin{equation}\label{projkawaview}
H^{r}(X, \sO_X) = H^{r}(X, K_X+(-K_X))=0.
\end{equation}
The pull-back of a non-zero holomorphic $r$-form from $T$ gives an immediate contradiction. 
We will also use the following special case of  \cite[Thm.5.1]{AD11}:
\end{remark}

\begin{lemma} \label{lemmanumericaldimensionprojective}
Let $X$ be a normal projective variety and $\Delta$ a boundary divisor on $X$ such that
the pair $(X, \Delta)$ is klt. Let \holom{\varphi}{X}{C} be a fibration onto a smooth curve such 
that $-(K_{X/C}+\Delta)$ is nef and $\varphi$-big. 
Then we have
$$
(K_{X/C}+\Delta)^{\dim X} = 0.
$$
\end{lemma}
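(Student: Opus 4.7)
The plan is to argue by contradiction, combining Kawamata--Viehweg vanishing with the Leray spectral sequence of $\varphi$. Write $L := K_{X/C} + \Delta$ and $n := \dim X$. Since $-L$ is nef one has $(-L)^n \geq 0$, and the identity $L^n = (-1)^n (-L)^n$ reduces the statement to proving that $(-L)^n = 0$. So I would assume for contradiction that $(-L)^n > 0$; then $-L$ is nef and big in the absolute (not merely $\varphi$-relative) sense.

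The key point is that $\varphi^* K_C$ is a Cartier divisor satisfying
\[
\varphi^* K_C - (K_X + \Delta) \;=\; -K_{X/C} - \Delta \;=\; -L,
\]
which under the contradictory hypothesis is nef and big. Since $(X, \Delta)$ is klt (so $X$ has, in particular, rational singularities), Kawamata--Viehweg vanishing for klt pairs applies and yields
\[
H^i(X, \varphi^* K_C) = 0 \qquad \text{for every } i > 0.
\]

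I would then contradict this vanishing via Leray. Normality of $X$ together with connectedness of the fibres gives $\varphi_* \sO_X = \sO_C$, so by the projection formula $\varphi_* \varphi^* K_C = K_C$. Since $\dim C = 1$, the low-degree terms of the Leray spectral sequence collapse to the short exact sequence
\[
0 \lra H^1(C, K_C) \lra H^1(X, \varphi^* K_C) \lra H^0(C, R^1 \varphi_* \varphi^* K_C) \lra 0,
\]
so the vanishing of the middle term would force $H^1(C, K_C) = 0$. But Serre duality on the smooth projective curve $C$ gives $H^1(C, K_C) \cong H^0(C, \sO_C)^{\vee} = \bC$, a contradiction.

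The only point requiring some care is to cite the correct form of the log Kawamata--Viehweg vanishing: that for a Cartier divisor $M$ on a normal projective variety $X$ underlying a klt pair $(X, \Delta)$ with $M - (K_X + \Delta)$ nef and big, one has $H^i(X, M) = 0$ for $i > 0$. This is standard. Observe that the argument is uniform in the genus of $C$: the (a priori delicate) case $C = \PP^1$ presents no difficulty, since $h^1(C, K_C) = 1$ for every smooth projective curve.
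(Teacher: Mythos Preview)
Your argument is correct. The paper itself does not prove this lemma directly but quotes it as a special case of \cite[Thm.~5.1]{AD11}; your proof is a self-contained substitute and in fact mirrors the vanishing argument the paper sketches in Remark~\ref{remarkprojective} for the closely related statement over a torus. The only difference is that instead of pulling back a top holomorphic form from the base you use Serre duality on $C$ to produce the nonzero class in $H^1(C,K_C)$, which is the natural thing to do over a curve of arbitrary genus.

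Two minor stylistic remarks. First, the sentence about $L^n=(-1)^n(-L)^n$ is unnecessary: since $-L$ is nef, $(-L)^n\geq 0$, and the statement $(K_{X/C}+\Delta)^n=0$ is equivalent to $(-L)^n=0$ without any sign discussion. Second, you could bypass the Leray spectral sequence entirely: the edge map $H^1(C,K_C)=H^1(C,\varphi_*\varphi^*K_C)\to H^1(X,\varphi^*K_C)$ is just pull-back of cohomology along $\varphi$, and it is injective because a nonzero $1$-form on $C$ pulls back to a nonzero $1$-form on $X$ (surjectivity of $\varphi$). This is exactly the ``pull back a form from the base'' step of Remark~\ref{remarkprojective}, and it makes the last paragraph of your write-up shorter.
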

\vspace{5pt}

To prove Proposition \ref{propositionnumericaldimension} for arbitrary K\"ahler manifolds,
we first prove that if $\nd (-K_X)\geq n-r+1$, then $\pi_* ((-K_X)^{n-r+1})$ is nontrivial .
More precisely, we have

\begin{lemma} \label{lemmanumericaldimension}
Let $X$ be a compact K\"ahler manifold of dimension $n$, and let
$\pi: X\rightarrow T$ be a surjective morphism onto a compact K\"ahler manifold $(T, \omega_T)$ of dimension $r$.
Let $L$ be a line bundle on $X$ that if nef and $\pi$-big.
If $\nd(L)\geq n-r+1$, we have
$$
\int_{X}L^{n-r+1}\wedge(\pi^{*}\omega_{T})^{r-1} > 0.
$$
\end{lemma}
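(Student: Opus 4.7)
The plan is to view the integral through the pushforward $\pi_{*}$ to $T$ and argue positivity of the resulting $(1,1)$-class. By fibre integration (projection formula),
\begin{equation*}
\int_{X} L^{n-r+1}\wedge(\pi^{*}\omega_{T})^{r-1} = \int_{T}\pi_{*}(L^{n-r+1})\wedge\omega_{T}^{r-1},
\end{equation*}
where $\pi_{*}$ sends the $(n-r+1,n-r+1)$-class $L^{n-r+1}$ to a $(1,1)$-class on $T$. Since $\omega_{T}^{r-1}$ is the $(r-1)$-st power of a K\"ahler class and any non-zero pseudoeffective $(1,1)$-class on a compact K\"ahler manifold pairs strictly positively with such a class, it suffices to show that $\pi_{*}(L^{n-r+1})$ is non-zero and pseudoeffective.

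Pseudoeffectivity is immediate from the nefness of $L$: for every $\epsilon>0$, the class $L+\epsilon[\omega_{X}]$ admits a smooth non-negative representative, whose $(n-r+1)$-fold wedge is a smooth non-negative $(n-r+1,n-r+1)$-form. Its direct image under $\pi$ is a non-negative $(1,1)$-current on $T$, and since the pseudoeffective cone is closed, the limit class $\pi_{*}(L^{n-r+1})$ is pseudoeffective.

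The non-vanishing of $\pi_{*}(L^{n-r+1})$ is the main point, and it is where both hypotheses enter. By Remark \ref{remarknumericaldimension}, the assumption $\nd(L)\geq n-r+1$ together with the nefness of $L$ produces a non-zero closed positive current $\Theta$ representing $L^{n-r+1}$, while the $\pi$-bigness yields, by Fubini on a general fibre $F$,
\begin{equation*}
\int_{X} L^{n-r}\wedge(\pi^{*}\omega_{T})^{r} = (L|_{F})^{n-r}\int_{T}\omega_{T}^{r}>0.
\end{equation*}
I would then argue that $\pi_{*}\Theta\neq 0$, i.e., that $\Theta$ is not supported in the fibres of $\pi$. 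One concrete route is to observe that
\begin{equation*}
(L+C\pi^{*}\omega_{T})^{n} = \binom{n}{r}C^{r} L^{n-r}(\pi^{*}\omega_{T})^{r}+O(C^{r-1})
\end{equation*}
(higher powers of $\pi^{*}\omega_{T}$ vanish), so that $L+C\pi^{*}\omega_{T}$ is nef and big for $C$ large; combining the associated Demailly K\"ahler current with the cohomological identity
\begin{equation*}
L^{n-r+1}(\pi^{*}\omega_{T})^{r-1} = (L+C\pi^{*}\omega_{T})^{n-r+1}(\pi^{*}\omega_{T})^{r-1} - C(n-r+1) L^{n-r}(\pi^{*}\omega_{T})^{r}
\end{equation*}
should give the desired strict positivity, with the hypothesis $\nd(L)\geq n-r+1$ entering to balance the two terms on the right hand side.

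The hardest step will be this last quantitative control: converting the qualitative non-vanishing $L^{n-r+1}\neq 0$ and the relative bigness into strict positivity of the pairing with the degenerate form $(\pi^{*}\omega_{T})^{r-1}$, and ensuring that the K\"ahler current estimate for $L+C\pi^{*}\omega_{T}$ dominates the linear subtracted term for an appropriate choice of $C$.
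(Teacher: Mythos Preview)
Your reduction is sound: by the projection formula the integral equals $\int_T \pi_*(L^{n-r+1})\wedge\omega_T^{r-1}$, the class $\pi_*(L^{n-r+1})$ is pseudoeffective, and a non-zero pseudoeffective $(1,1)$-class pairs strictly positively with $\omega_T^{r-1}$. So everything hinges on non-vanishing, and that is where your argument breaks.

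The cohomological identity you write down is a tautology. Expand $(L+C\pi^*\omega_T)^{n-r+1}$ and wedge with $(\pi^*\omega_T)^{r-1}$: since $(\pi^*\omega_T)^{r+1}=0$, only the $j=0$ and $j=1$ terms survive, and you recover exactly
\[
(L+C\pi^*\omega_T)^{n-r+1}(\pi^*\omega_T)^{r-1}=L^{n-r+1}(\pi^*\omega_T)^{r-1}+C(n-r+1)L^{n-r}(\pi^*\omega_T)^{r}.
\]
So the identity carries no information beyond its own two sides. Knowing that $L+C\pi^*\omega_T$ is big bounds the left side from below, but never by more than the explicit linear term you subtract. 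More tellingly, nothing in your outline actually uses $\nd(L)\geq n-r+1$: every ingredient you invoke (bigness of $L+C\pi^*\omega_T$, the K\"ahler current from \cite{DP04}, the relative bigness $\int_X L^{n-r}(\pi^*\omega_T)^r>0$) is equally available when $\nd(L)=n-r$, where the conclusion is false. An argument that does not distinguish these two cases cannot close.

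The paper's proof supplies the two missing ideas. First, set $m:=\nd(L)=n-r+k$ and prove the \emph{top} positivity $\int_X L^{m}\wedge(\pi^*\omega_T)^{n-m}>0$: with $\alpha=L+C\pi^*\omega_T$ nef and big one has $\alpha-\epsilon\omega_X$ pseudoeffective, hence the chain
\[
\int_X L^{m}\alpha^{n-m}\geq\epsilon\int_X L^{m}\alpha^{n-m-1}\omega_X\geq\cdots\geq\epsilon^{n-m}\int_X L^{m}\omega_X^{n-m}>0,
\]
and because $L^{m+1}=0$ the binomial expansion of $\alpha^{n-m}$ collapses to the single term $C^{n-m}(\pi^*\omega_T)^{n-m}$. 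This is precisely where the numerical-dimension hypothesis does its work. Second, with both $\int_X L^{n-r+k}(\pi^*\omega_T)^{r-k}>0$ and $\int_X L^{n-r}(\pi^*\omega_T)^{r}>0$ in hand, the Hovanskii--Teissier inequality for nef classes on compact K\"ahler manifolds (Appendix~\ref{appendixinequality}) interpolates to give $\int_X L^{n-r+1}(\pi^*\omega_T)^{r-1}>0$. Your sketch contains the relative-bigness endpoint but neither the top endpoint nor the interpolation step.
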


\begin{proof}
We suppose that $\nd (L)=n-r+k$ for some $k \in \N^*$.
Since $L$ is nef and $\pi$-big, the class  
\begin{equation} \label{equationone}
\alpha=L+C\cdot\pi^{*}(\omega_{T})
\end{equation}
is a nef class for any fixed constant $C>0$, and
$\int_{X}\alpha^{n} > 0$.
Thanks to \cite[Thm. 0.5]{DP04},
there exists $\epsilon> 0$, such that $\alpha-\epsilon\omega_{X}$ is a pseudoeffective class.
Combining this with the fact that $L$ is nef,
we have
\begin{equation}\label{degrelast}
\int_{X} L^{n-r+k}\wedge \alpha^{r-k}
\geq \epsilon\int_{X} L^{n-r+k}\wedge \alpha^{r-k-1}\wedge\omega_{X}
\end{equation}
$$
\geq \epsilon^2\int_{X} L^{n-r+k}\wedge \alpha^{r-k-2}\wedge\omega^{2}_{X}\geq
\cdots\geq \epsilon^{r-k}\int_{X}L^{n-r+k}\wedge\omega_{X}^{r-k}> 0,$$
where the last inequality comes from Remark \ref{remarknumericaldimension}.
By the definition of numerical dimension and $\eqref{equationone}$, we have
\begin{equation}\label{degrefirst}
C^{n-k}\cdot\int_{X} L^{n-r+k}\wedge (\pi^{*}\omega_{T})^{r-k} = \int_{X} L^{n-r+k}\wedge \alpha^{r-k}.
\end{equation}
Now \eqref{degrelast} and \eqref{degrefirst} imply that
\begin{equation} \label{equationtwo}
\int_{X} L^{n-r+k}\wedge (\pi^{*}\omega_{T})^{r-k}  > 0.
\end{equation}
On the other hand, since $L$ is $\pi$-big, we have
\begin{equation}\label{equationthree}
\int_{X} L^{n-r}\wedge (\pi^{*}\omega_{T})^{r}> 0.
\end{equation}
Using the Hovanskii-Teissier inequality in the K\"{a}hler case (cf. Appendix \ref{appendixinequality}),
the inequalities $\eqref{equationtwo}$ and $\eqref{equationthree}$ imply
$\int_{X}L^{n-r+1}\wedge(\pi^{*}\omega_{T})^{r-1}> 0$.
\end{proof}

We recall a vanishing theorem proved in \cite[Prop. 2.4]{Cao12}

\begin{lemma}\label{keyvanishing1}
Let $L$ be a line bundle on a compact K\"{a}hler manifold $(X,\omega)$ of dimension $n$, and 
let $\varphi$ be a metric on $L$ with analytic singularities. 
Let 
$ \lambda_{1}(x)\leq \lambda_{2}(x)\leq\cdots\leq\lambda_{n}(x)$
be the eigenvalues of $\frac{i}{2\pi}\Theta_{\varphi}(L)$ with respect to $\omega$.
If
\begin{equation}\label{equation1}
\sum_{i=1}^{p} \lambda_{i}(x)\geq c
\end{equation}
for some constant $c> 0$ independent of $x \in X$,
then
$$H^{q}(X, K_{X}\otimes L\otimes \mathcal{I}(\varphi))=0 
\qquad \forall \ q\geq p.$$
\end{lemma}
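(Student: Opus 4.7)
My plan is to establish this as a refined Nadel-type vanishing via the Bochner-Kodaira-Nakano technique. The starting observation is purely linear-algebraic: the ordering $\lambda_1(x)\leq\cdots\leq\lambda_n(x)$ combined with $\sum_{i=1}^{p}\lambda_i(x)\geq c>0$ forces $p\,\lambda_p(x)\geq c$, hence $\lambda_j(x)\geq c/p>0$ for all $j\geq p$. Consequently $\sum_{i=1}^{q}\lambda_i(x)\geq c$ for every $q\geq p$, which is precisely the Girbau positivity condition in degree $q$ on the Nakano curvature operator of $L$ acting on $L$-valued $(n,q)$-forms.

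In the smooth case (where $\mathcal{I}(\varphi)=\sO_X$), one applies the Bochner-Kodaira-Nakano inequality to a harmonic representative $u$ of a class in $H^{q}(X,K_X\otimes L)$:
$$0 \;=\; \|\bar\partial u\|^2+\|\bar\partial^{*}u\|^2 \;\geq\; \int_X \langle [i\Theta_\varphi(L),\Lambda_\omega]u,u\rangle_\omega\, dV_\omega \;\geq\; c\,\|u\|^2,$$
where the last inequality uses the pointwise lower bound just derived, namely that on $(n,q)$-forms the curvature operator is bounded below by $\lambda_1+\cdots+\lambda_q\geq c$. It follows that $u\equiv 0$, which gives the vanishing in every degree $q\geq p$.

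For a singular metric $\varphi$ with analytic singularities, my preferred route is Demailly's regularization: approximate $\varphi$ by smooth psh weights $\varphi_\epsilon$ whose curvature eigenvalues converge to those of $\varphi$ outside the singular locus while preserving the lower bound up to an arbitrarily small additive loss; the strict gap $c>0$ ensures this loss is harmless. Hörmander's $L^2$-estimates with weight $e^{-\varphi_\epsilon}$ produce solutions of the $\bar\partial$-equation in the appropriate weighted spaces, and passing to the limit yields the vanishing at the level of the multiplier ideal $\mathcal{I}(\varphi)$. Equivalently, one can pass to a log-resolution $\mu:\tilde X\to X$ of the singularities of $\varphi$, apply the smooth case on $\tilde X$ to $\mu^{*}L\otimes \sO_{\tilde X}(-\lfloor \mu^{*}\varphi\rfloor)$ equipped with the residual smooth metric and with the Kähler form $\mu^{*}\omega+\eta\omega'$, and push down using Grauert-Riemenschneider-type vanishing $R^{i}\mu_{*}(K_{\tilde X}\otimes\cdots)=0$ for $i>0$, together with the identity $\mu_{*}\bigl(\sO_{\tilde X}(K_{\tilde X/X})\otimes \mathcal{I}(\mu^{*}\varphi)\bigr)=\mathcal{I}(\varphi)$.

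The main technical obstacle is ensuring that the Bochner estimate survives near the singular locus of $\varphi$ (or near the exceptional locus of $\mu$) after regularization: this is standard but delicate, and is handled by truncating the weight, working with complete Kähler metrics on the smooth locus, and letting auxiliary parameters tend to zero. In each case the strict positivity $c>0$ provides the uniform control needed to pass to the limit and preserve the vanishing.
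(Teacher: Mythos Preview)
The paper does not actually prove this lemma: it merely recalls the statement and cites \cite[Prop.~2.4]{Cao12} for the proof. So there is no ``paper's own proof'' to compare against here.

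That said, your sketch is the correct and standard argument, and it is almost certainly what lies behind the cited reference. The linear-algebra step is right: from $\lambda_1\leq\cdots\leq\lambda_n$ and $\sum_{i=1}^{p}\lambda_i\geq c$ one gets $\lambda_p\geq c/p>0$, hence $\lambda_j>0$ for $j\geq p$ and $\sum_{i=1}^{q}\lambda_i\geq c$ for all $q\geq p$; this is exactly the pointwise lower bound needed on the curvature operator $[i\Theta_\varphi(L),\Lambda_\omega]$ acting on $(n,q)$-forms. The Bochner--Kodaira--Nakano inequality then kills harmonic representatives in the smooth case, and the passage to analytic singularities via either Demailly regularization or log-resolution plus $L^2$-estimates on the complement with a complete metric is the standard route. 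One small caution on the log-resolution variant: after pulling back and subtracting the integral part of the divisor, the residual weight is not literally smooth (it still has klt-type log poles with coefficients in $[0,1)$), so you should either keep working with $L^2$-methods on the resolution or invoke the fact that the multiplier ideal of the residual weight is trivial; the strict gap $c>0$ absorbs the perturbation by $\eta\omega'$ needed to make $\mu^*\omega$ K\"ahler. None of this is a gap in your argument, just a point where ``standard but delicate'' deserves the emphasis you gave it.
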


The following vanishing property plays an important role in the proof of Proposition \ref{propositionnumericaldimension}, 
Although it was essentially proved in \cite[Prop.5.3]{Cao12}, we give the proof since
the situation here is a little bit more general.

\begin{proposition}\label{lemmavanishing}
Let $(X, \omega_{X})$ be a compact K\"ahler manifold of dimension $n$, and
$L$ be a nef line bundle on $X$.
Suppose that the following holds:
\begin{enumerate}[(i)]
\item $X$ admits
a two steps tower fibration 
$$\begin{CD} X @>\pi >> T @> \pi_{1}>> S \end{CD}$$ 
where $\pi$ is a fibration onto a compact K\"ahler manifold $(T, \omega_T)$ of dimension $r$,
and $\pi_{1}$ is a smooth fibration onto a smooth curve $S$.
\item $L$ is $\pi$-big and satisfies
$$\pi_{*}(c_{1}(L)^{n-r+1})=\pi_{1}^{*}(\omega_{S})$$ 
for a K\"ahler metric $\omega_{S}$ on $S$.
\end{enumerate}
Then we have
$$ 
H^{q}(X,K_{X}\otimes L)=0 \qquad \forall \  q\geq r.
$$
\end{proposition}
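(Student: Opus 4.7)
The plan is to apply Lemma~\ref{keyvanishing1} with $p=r$. It suffices to produce a singular Hermitian metric $\varphi$ on $L$ with analytic singularities and multiplier ideal $\sI(\varphi)=\sO_X$, together with a Kähler metric $\omega$ on $X$, such that the sum of the $r$ smallest eigenvalues of $\tfrac{i}{2\pi}\Theta_\varphi(L)$ with respect to $\omega$ is bounded below by a strictly positive constant independent of $x\in X$. Since $L$ is nef, negative errors of order $\epsilon$ are permitted in $r-1$ directions, so the task reduces to producing $n-r+1$ uniformly positive eigenvalues on a Zariski open subset of $X$.

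The first $n-r$ positive eigenvalues come from the $\pi$-bigness. Demailly's regularisation of the nef class $c_1(L)$ yields smooth metrics $h_\epsilon$ on $L$ with $\tfrac{i}{2\pi}\Theta_{h_\epsilon}(L)\geq -\epsilon\,\omega_X$, while $\pi$-bigness, via a relative Kodaira lemma, supplies a singular metric $h_1$ on $L$ with analytic singularities whose curvature, restricted to a general $\pi$-fibre, is bounded below by a Kähler form. The convex combination
\[
\varphi_{\epsilon,\delta} := (1-\delta)\,h_\epsilon + \delta\,h_1
\]
is a singular metric on $L$ whose curvature is $\geq -\epsilon'\omega_X$ everywhere and strictly positive in the $n-r$ vertical directions of $\pi$ on a Zariski open set $X^\circ\subset X$; choosing $\delta\ll 1$ keeps $\sI(\varphi_{\epsilon,\delta})=\sO_X$.

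The decisive $(n-r+1)$-st positive eigenvalue has to be extracted from hypothesis~(ii), in the spirit of \cite[Prop.~5.3]{Cao12}. The identity $\pi_*(c_1(L)^{n-r+1})=\pi_1^*\omega_S$ forces a positive contribution of the curvature of $L$ in a direction transverse to the $\pi_1$-fibres: if $\tfrac{i}{2\pi}\Theta_\varphi(L)$ were dominated by its $n-r$ vertical eigenvalues while remaining only $\epsilon$-close to zero horizontally, then the integral $\int_X \bigl(\tfrac{i}{2\pi}\Theta_\varphi(L)\bigr)^{n-r+1}\wedge\pi^*\omega_T^{r-1}$ would be of order $\epsilon$, contradicting the strict positivity of $\int_T\pi_1^*\omega_S\wedge\omega_T^{r-1}$. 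Converting this global obstruction into a uniform pointwise lower bound on an $(n-r+1)$-st eigenvalue --- after rescaling the reference metric to $\omega:=\omega_X+\lambda\,(\pi_1\circ\pi)^*\omega_S$ for a suitable $\lambda>0$ --- is the main technical obstacle, and is the step where the precise form of (ii), rather than a mere bigness statement on $T$, is indispensable. Once the eigenvalue inequality is secured, Lemma~\ref{keyvanishing1} applied to $\varphi_{\epsilon,\delta}$ yields $H^q(X,K_X\otimes L\otimes\sI(\varphi_{\epsilon,\delta}))=0$ for $q\geq r$; combined with $\sI(\varphi_{\epsilon,\delta})=\sO_X$, this gives the desired vanishing $H^q(X,K_X\otimes L)=0$ for all $q\geq r$.
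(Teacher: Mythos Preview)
Your overall strategy---apply Lemma~\ref{keyvanishing1} with $p=r$ by building a metric on $L$ with analytic singularities, trivial multiplier ideal, and the right eigenvalue behaviour---matches the paper. But the heart of the proof is missing: you have not actually produced the $(n-r+1)$-st positive eigenvalue, and the heuristic you offer does not work. A global identity like $\int_X \Theta^{n-r+1}\wedge\pi^*\omega_T^{r-1}>0$ gives no pointwise lower bound on any eigenvalue of $\Theta$; a form can have arbitrarily small eigenvalues at every point while its integrated powers stay bounded away from zero. You yourself flag this as ``the main technical obstacle'' and then leave it open. That is the gap.

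The paper's mechanism is quite different from what you sketch. Hypothesis~(ii) is used, via \cite[Lemma~5.1]{Cao12}, to show that the class $L - d\cdot\pi^*\pi_1^*\omega_S$ is pseudoeffective for some $d>0$. This yields a singular metric $h_1$ on $L$ with $i\Theta_{h_1}(L)\geq d\cdot\pi^*\pi_1^*\omega_S$: positivity precisely in the one horizontal direction coming from the curve $S$. Separately, since $L+\pi^*\omega_T$ is nef and big, \cite[Thm.~0.5]{DP04} gives a metric $h_2$ with $i\Theta_{h_2}(L)\geq c\,\omega_X-\pi^*\omega_T$: positivity in all $n$ directions at the cost of a negative horizontal term. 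Together with the smooth nef metrics $h_\epsilon$, the combination $h=\epsilon_1 h_1+\epsilon_2 h_2+(1-\epsilon_1-\epsilon_2)h_\epsilon$ with $1\gg\epsilon_1\gg\epsilon_2\gg\epsilon$ has curvature bounded below by
\[
d\,\epsilon_1\,\pi^*\pi_1^*\omega_S \;-\; \epsilon_2\,\pi^*\omega_T \;+\;(c\,\epsilon_2-\epsilon)\,\omega_X.
\]
The reference metric is then $\omega_\tau=\tau\,\omega_X+\pi^*\omega_T$ with $\tau\ll\epsilon$ (not your $\omega_X+\lambda\,(\pi_1\circ\pi)^*\omega_S$): rescaling so that the $T$-directions dominate makes the negative $-\epsilon_2\pi^*\omega_T$ contribution bounded by $\epsilon_2$ in eigenvalue terms, while the $S$-direction still carries a large positive eigenvalue of order $\epsilon_1$. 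A minimax argument then verifies condition~\eqref{equation1} for $p=r$. Your proposal has neither the pseudoeffectivity step nor the correct rescaling, and without them the argument does not close.
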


\begin{proof}
By \cite[Lemma 5.1]{Cao12}\footnote{Note that the proof of \cite[Lemma 5.1]{Cao12} works well in our case.} the class
$L-d\cdot \pi^* \pi_{1}^* \omega_{S}$
is pseudoeffective for some $d> 0$.
Therefore there exists a singular metric $h_{1}$ on $L$
such that 
$$i\Theta_{h_{1}}(L)\geq d\cdot\pi^{*}\pi_{1}^*\omega_{S}.$$
Since $c_{1}(L)+ \pi^{*} \omega_{T}$ is nef and  
$\int_{X}(c_{1}(L)+\pi^{*} \omega_{T})^{n}> 0$,
\cite[Thm. 0.5]{DP04}
implies the existence of a singular metric $h_{2}$ on $L$
such that
$$i\Theta_{h_{2}}(L)\geq c\cdot \omega_{X}- \pi^{*} \omega_{T}$$
in the sense of currents for some constant $c > 0$.
Thanks to a standard regularization theorem \cite{Dem92},
we can suppose moreover that $h_{1}, h_{2}$ have analytic singularities.
Since $L$ is nef we know that for any $\epsilon> 0$, there exists a smooth metric $h_{\epsilon}$ on $L$
such that $i\Theta_{h_{\epsilon}}(L)\geq -\epsilon\omega_{X}$.
Now we define a new metric $h$ on $L$:
$$h=\epsilon_{1} h_{1}+\epsilon_{2} h_{2}+ (1-\epsilon_{1}-\epsilon_{2})h_{\epsilon}$$
for some $1\gg \epsilon_{1}\gg \epsilon_{2} \gg \epsilon> 0$.
By construction, we have
\begin{equation}\label{equation2}
i\Theta_{h}(L)=
\epsilon_{1}i\Theta_{h_{1}}(L)+\epsilon_{2}i\Theta_{h_{2}}(L)+(1-\epsilon_{1}-\epsilon_{2})i\Theta_{h_{\epsilon}}(L)
\end{equation}
$$
\geq d\cdot\epsilon_{1} \pi^{*}(\omega_{S})-\epsilon_{2}\pi^{*}(\omega_{T})+(c \cdot\epsilon_{2}-\epsilon)\omega_{X}.
$$
Set
$\omega_{\tau}=\tau\cdot\omega_{X}+\pi^{*}(\omega_{T})$ for some $\tau> 0$.

We now check that $( i\Theta_{h}(L), \omega_{\tau} )$ satisfies the condition \eqref{equation1} in Lemma \ref{keyvanishing1} 
for $p=r$ when $\tau$ is small enough (i.e., we consider the eigenvalues of $i\Theta_{h}(L)$ with respect to $\omega_{\tau}$,
where $\tau\ll \epsilon$).
Let $x\in X$ and let $V$ be a $r$ dimensional subspace of $(T_X)_x$.
By an elementary estimate, we 
have\footnote{In fact, since $\pi_1$ is a submersion, $\omega_T$ decomposes the tangent bundle of $T$ as $T_{T/S}\oplus \pi_1^* (T_S)$ 
in the sense of $C^{\infty}$. Observing that $\pi_1$ is smooth and $\epsilon_1\gg \epsilon_2$,
we have
\begin{equation}\label{addremark}
d\cdot\epsilon_{1} \pi_1^{*}(\omega_{S})(t,t)-\epsilon_{2}\omega_{T}(t, t)\geq \frac{d\cdot\epsilon_{1}}{2}\omega_{T}(t,t)
\qquad\text{for any }t\in \pi_1^* (T_S).
\end{equation}
Since $\dim V=r$, there exists an non zero element $v\in V$ such that $\pi_* (v)\in \pi_1^* (T_S)$.
By \eqref{equation2} and \eqref{addremark}, we obtain
\begin{equation}
\frac{ i\Theta_h (L) (v, v )}{\langle v, v\rangle_{\omega_{\tau}}}\geq 
\frac{(c\epsilon_2 -\epsilon )\langle v, v\rangle_{\omega_X}+ \frac{d\cdot\epsilon_1}{2}\langle \pi_* (v), \pi_* (v)\rangle_{\omega_T}}{\tau\langle v, v\rangle_{\omega_X}+\langle \pi_* (v), \pi_* (v)\rangle_{\omega_T}}
\geq \min\{ \frac{c \epsilon_2 -\epsilon}{\tau}, \frac{d\cdot \epsilon_1}{2}\}.
\end{equation}
}
$$\sup\limits_{v\in V}\frac{ i\Theta_h (L) (v, v )}{\langle v, v\rangle_{\omega_{\tau}}}\geq 
 \min \{ \frac{c \epsilon_2 -\epsilon}{\tau}, \frac{d\cdot \epsilon_1}{2}\}\gg (r-1)\cdot \epsilon_2 $$
by the choice of $\tau , \epsilon_1, \epsilon_2$.
Moreover, since $\epsilon_{2}\ll \epsilon_{1}$, 
\eqref{equation2} implies that $i\Theta_{h}(L)$ has at most $(r-1)$-negative eigenvectors
and their eigenvalues with respect to $\omega_{\tau}$ are larger than $ - \epsilon_{2}$.
By the minimax principle, 
$( i\Theta_{h}(L), \omega_{\tau} )$ satisfies the condition \eqref{equation1} in Lemma \ref{keyvanishing1}.
Thus we have
$$H^{q}(X,K_{X}\otimes L\otimes\mathcal{I} (h))=0
\qquad \forall \ q \geq r.$$
Since $\epsilon_{1}, \epsilon_{2}$ are small enough, 
we have $\mathcal{I}(h)=\mathcal{O}_{X}$.
Therefore we get
$$
H^{q}(X,K_{X}\otimes L)=0\qquad \forall \  q\geq r.
$$
\end{proof}

To prove the main theorem in this section, 
we need another vanishing lemma. 
The idea of the proof is essentially the same as Proposition \ref{lemmavanishing}.

\begin{lemma}\label{lemmavanishing3}
Let $(X, \omega_{X})$ be a compact K\"ahler manifold of dimension $n$
which admits a fibration $\pi: X\rightarrow T$ onto a compact K\"ahler manifold $(T, \omega_T)$ of dimension $r$.
Let $L$ be a line bundle on $X$ that is nef and $\pi$-big, and let $A$ be a line bundle on $T$
that is semiample. If $\nd (A)=s$, then we have 
$$
H^q (X, K_X \otimes L \otimes \pi^* (A) )=0 \qquad \forall \ q\geq r-s+1.
$$
\end{lemma}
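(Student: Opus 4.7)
The plan is to adapt the proof of Proposition \ref{lemmavanishing}, the role of the tower-fibration positivity being played by the line bundle $A$. Since $A$ is semiample with $\nd(A)=s$, its Iitaka fibration yields a surjective morphism $\psi\colon T\to S$ onto a normal projective variety of dimension $s$ and an ample line bundle $A'$ on $S$ with $A = \psi^*A'$. Fixing a smooth K\"ahler metric on $A'$ with curvature $\omega_S$ and pulling back, one obtains a smooth semi-positive metric on $\pi^*A$ with curvature $\phi^*\omega_S$, where $\phi := \psi\circ \pi\colon X\to S$. Proceeding exactly as in the proof of Proposition \ref{lemmavanishing}, I construct a singular metric $h_L$ on $L$ with analytic singularities such that
\[
i\Theta_{h_L}(L)\geq \tfrac{c}{2}\epsilon_2\,\omega_X - \epsilon_2\,\pi^*\omega_T
\]
for a constant $c>0$ and any sufficiently small $\epsilon_2 > 0$. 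The tensor metric $h := h_L\otimes \pi^* h_A$ on $L\otimes \pi^*A$ then has analytic singularities and curvature bounded below by
\[
i\Theta_h(L\otimes \pi^*A)\geq \tfrac{c}{2}\epsilon_2\,\omega_X - \epsilon_2\,\pi^*\omega_T + \phi^*\omega_S.
\]

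Set $\omega_\tau := \tau\omega_X + \pi^*\omega_T$ for small $\tau > 0$. I verify the hypothesis of Lemma \ref{keyvanishing1} with $p = r-s+1$. First, a trivial estimate gives that each eigenvalue of $i\Theta_h$ with respect to $\omega_\tau$ satisfies $\lambda_i(x)\geq -\epsilon_2$, so by Ky Fan's inequality it suffices to bound $\lambda_{r-s+1}(x)$ below by a uniform positive constant $c_0 \gg (r-s)\epsilon_2$. By Courant--Fischer this reduces to showing that every $(r-s+1)$-dimensional subspace $V\subset T_xX$ contains a non-zero $v$ with $i\Theta_h(v,\bar v)/\omega_\tau(v,\bar v) \geq c_0$. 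At a point where $\psi$ is a submersion one has the smooth decomposition $T_T = T_{T/S}\oplus \psi^*T_S$ as in the footnote to Proposition \ref{lemmavanishing}, and the dimension count
\[
(r-s+1)+(n-r+s)-n = 1
\]
produces a non-zero $v\in V$ with $\pi_*v\in \psi^*T_S$. Using the uniform estimate $\phi^*\omega_S(v,\bar v) \geq (1/d)\,\pi^*\omega_T(v,\bar v)$ for such $v$, and taking $\epsilon_2<1/(2d)$ and $\tau\ll \epsilon_2$, one obtains
\[
\frac{i\Theta_h(v,\bar v)}{\omega_\tau(v,\bar v)}\geq \min\!\left\{\tfrac{c\epsilon_2}{2\tau},\,\tfrac{1}{2d}\right\} = \tfrac{1}{2d},
\]
with the Rayleigh quotient even larger in the subcase $\pi_*v = 0$. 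Since $\epsilon_2$ is small one has $\mathcal{I}(h) = \mathcal{O}_X$, and Lemma \ref{keyvanishing1} then gives the asserted vanishing.

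The main technical obstacle is the behaviour on the proper analytic subset where $\psi$ fails to be a submersion: on that locus the decomposition $T_T = T_{T/S}\oplus \psi^*T_S$ is not defined and the rank of $\phi^*\omega_S$ drops below $s$, so the lower bound $1/(2d)$ on the Rayleigh quotient is not immediately uniform. I expect this to be resolved by passing to a modification $\mu\colon \tilde T\to T$ on which the Iitaka map $\tilde\psi\colon \tilde T\to \tilde S$ is smooth (via semistable reduction), carrying out the eigenvalue estimate on a resolution $\hat X$ of the fibre product $X\times_T\tilde T$, and descending the vanishing back to $X$ using Grauert--Riemenschneider to control the higher direct images of $K_{\hat X}$ twisted by the pull-back of $L\otimes \pi^*A$.
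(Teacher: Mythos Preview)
Your approach is essentially the paper's: combine the metric $h_2$ on $L$ (coming from bigness of $L+\pi^*\omega_T$) with the nef metrics $h_\epsilon$, tensor with a smooth semipositive metric on $\pi^*A$, and verify the eigenvalue hypothesis of Lemma~\ref{keyvanishing1} for $p=r-s+1$ with respect to $\omega_\tau$.

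The obstacle you flag at the end is genuine, and the paper does not resolve it either: the paper simply asserts that a semiample $A$ with $\nd(A)=s$ carries a smooth metric $h_A$ whose curvature is semipositive with $s$ strictly positive eigenvalues bounded uniformly away from zero. This is false in general. For instance, if $\mu\colon T\to T_0$ is the blow-up of a smooth projective surface at a point and $A=\mu^*H$ with $H$ ample, then $A$ is semiample with $\nd(A)=2$, but every smooth semipositive representative of $c_1(A)$ integrates to zero over the exceptional curve, hence has rank $\le 1$ somewhere on it. So neither your argument nor the paper's is complete for the lemma exactly as stated.

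This does not affect the paper, because Lemma~\ref{lemmavanishing3} is invoked only in the proof of Theorem~\ref{KVvanishing}, where the line bundle in question is $(\varphi|_{T_p})^*(A|_{S_p})$ and $\varphi|_{T_p}\colon T_p\to S_p$ is the base change of the \emph{smooth} torus fibration $\varphi\colon T\to S$ along a smooth complete intersection, hence itself a submersion. Under that extra hypothesis your eigenvalue estimate goes through with a uniform constant supplied by compactness. The cleanest repair is simply to add to the hypotheses that the Iitaka fibration of $A$ is a submersion.

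Your proposed fix via modification is unlikely to work as sketched: one cannot in general arrange $\tilde\psi\colon\tilde T\to\tilde S$ to be a submersion by modifying source and target (consider a family of curves acquiring a node). The Grauert--Riemenschneider descent you describe is fine in itself, but it only transfers the problem to $\hat X$, where the same degeneration of $\phi^*\omega_S$ persists unless the base map has actually been smoothed.
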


\begin{proof}
Since $A$ is semiample of numerical dimension $s$,
there exists a smooth metric $h_A$ on $A$ such that $i \Theta_{h_A} (A)$ is semipositive
and has $s$ strictly positive eigenvalues which admit a positive lower bound that does not depend on 
the point $t \in T$.
By the proof of Proposition \ref{lemmavanishing}, 
there exists a metric $h_{2}$ on $L$ with analytic singularities
such that
$$i\Theta_{h_{2}}(L)\geq c\cdot \omega_{X}- \pi^{*} \omega_{T}$$
in the sense of currents for some constant $c > 0$.
Note that $L$ is nef. Then for any $\epsilon> 0$, there exists a smooth metric $h_{\epsilon}$ on $L$
such that $i\Theta_{h_{\epsilon}}(L)\geq -\epsilon\omega_{X}$.
Now we define a new metric $h$ on $L$:
$$h=\epsilon_{2} h_{2}+ (1-\epsilon_{2})h_{\epsilon}$$
for some $\epsilon_{2}\ll 1$ and $\epsilon\ll c\cdot \epsilon_2$.
By construction, we have
\begin{equation}\label{equationadd2}
i\Theta_{h\cdot h_A}(L+\pi^* (A) )=
\epsilon_{2}i\Theta_{h_{2}}(L)+(1-\epsilon_{2})i\Theta_{h_{\epsilon}}(L)+\pi^* ( i\Theta_{h_A}(A) )
\end{equation}
$$\geq -\epsilon_{2}\pi^{*}(\omega_{T})+(c \cdot\epsilon_{2}-\epsilon)\omega_{X} + \pi^* (i\Theta_{h_A}(A) )
= (c \cdot\epsilon_{2}-\epsilon)\omega_{X} + \pi^* (i\Theta_{h_A}(A)-\epsilon_{2}\pi^{*}\omega_{T}) .$$
Since $i\Theta_{h_A}(A)$ is fixed, we can let 
$\epsilon_2$ small enough with respect to the smallest strictly positive eigenvalues of $i\Theta_{h_A}(A)$.
Set $\omega_{\tau}=\tau\omega_X+\pi^* (\omega_T)$ for $\tau> 0$.
Since the semipositive $(1,1)$-form $i\Theta_{h_A}(A)$ contains $s$ strictly positive directions,
by the same argument as in Proposition \ref{lemmavanishing}, we know that the pair
$$( i\Theta_{h\cdot h_A}(L \otimes A) , \omega_{\tau})$$
satisfies the condition \eqref{equation1} in Lemma \ref{keyvanishing1} 
for $p=r-s+1$ when $\tau$ is small enough.
Using Lemma \ref{keyvanishing1}, we obtain that 
$$H^q (X, K_X\otimes L\otimes A\otimes \mathcal{I}(h\cdot h_A))=0 \qquad\text{for }q\geq n-s+1 .$$
Since $\epsilon_2\ll 1$, we have $\mathcal{I}(h\cdot h_A)=\mathcal{O}_X$.
\end{proof}

We can now prove the main theorem in this section:

\begin{theorem}\label{KVvanishing}
Let $X$ be a compact K\"{a}hler manifold of dimension $n$. 
Suppose that there exists a fibration 
$\pi: X\rightarrow T$ onto a torus of dimension $r$.
Let $L$ be a line bundle on $X$ that is nef and $\pi$-big.
If $\nd (L) \geq n-r+1$, then we have
$$
H^{q}(X, K_{X} \otimes L)=0 \qquad \forall \  q \geq r.
$$
\end{theorem}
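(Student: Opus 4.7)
The plan is to construct a singular metric $h$ on $L$ so that $(i\Theta_h(L), \omega_\tau)$ satisfies the curvature condition \eqref{equation1} of Lemma \ref{keyvanishing1} with $p = r$, for a suitable Kähler form $\omega_\tau$; combined with $\mathcal{I}(h) = \sO_X$ this yields the desired vanishing. The argument mirrors the structure of Proposition \ref{lemmavanishing} and Lemma \ref{lemmavanishing3}; the novelty is to use the torus structure of $T$ in place of the curve pull-back $\pi_1^*\omega_S$ appearing there.

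Applying Lemma \ref{lemmanumericaldimension} to $L$ first yields
$$\int_T \pi_*(c_1(L)^{n-r+1}) \wedge \omega_T^{r-1} = \int_X c_1(L)^{n-r+1} \wedge (\pi^*\omega_T)^{r-1} > 0,$$
so the push-forward class $\alpha := \pi_*(c_1(L)^{n-r+1}) \in H^{1,1}(T, \R)$ is non-zero, and by Remark \ref{remarknumericaldimension} it is represented by a positive $(1,1)$-current. Because $T$ is a torus, averaging this current over the translation action produces a smooth, translation-invariant, semi-positive $(1,1)$-form $\beta$ in the class $\alpha$; by translation-invariance its rank is a constant $s \geq 1$.

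The main obstacle, I expect, is the following step: construct a singular metric $h_1$ on $L$ such that $i\Theta_{h_1}(L) \geq d \cdot \pi^*\beta$ for some $d > 0$; equivalently, prove that $c_1(L) - d\,\pi^*\alpha$ is pseudo-effective on $X$. This is the torus-base analogue of \cite[Lemma 5.1]{Cao12}, where the curve pull-back $\pi_1^*\omega_S$ is replaced by the translation-invariant form $\beta$; one must exploit the translation symmetry of $T$ together with the nefness and $\pi$-bigness of $L$ to control positivity along the base.

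Granting $h_1$, the rest proceeds as in the proofs of Proposition \ref{lemmavanishing} and Lemma \ref{lemmavanishing3}. From \cite[Thm. 0.5]{DP04} applied to $L + \pi^*\omega_T$ one gets a metric $h_2$ on $L$ with analytic singularities satisfying $i\Theta_{h_2}(L) \geq c\,\omega_X - \pi^*\omega_T$, and nefness produces smooth approximations $h_\epsilon$ with $i\Theta_{h_\epsilon}(L) \geq -\epsilon\,\omega_X$. Form
$$h = \epsilon_1 h_1 + \epsilon_2 h_2 + (1 - \epsilon_1 - \epsilon_2) h_\epsilon, \qquad 1 \gg \epsilon_1 \gg \epsilon_2 \gg \epsilon > 0,$$
so that
$$i\Theta_h(L) \geq d\epsilon_1\,\pi^*\beta - \epsilon_2\,\pi^*\omega_T + (c\epsilon_2 - \epsilon)\omega_X.$$
With $\omega_\tau = \tau\omega_X + \pi^*\omega_T$ and $\tau \ll \epsilon_2$, the eigenvalues of $i\Theta_h(L)$ with respect to $\omega_\tau$ split into three groups: $n - r$ fibre directions give eigenvalues of order $(c\epsilon_2 - \epsilon)/\tau$ (very large positive); $s$ horizontal directions transverse to $\ker\beta$ give eigenvalues of order $d\epsilon_1$; and $r - s$ horizontal directions inside $\ker\beta$ give eigenvalues of order $-\epsilon_2$. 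A minimax argument as in the footnote to Proposition \ref{lemmavanishing} shows that the sum of the $r$ smallest eigenvalues is bounded below by $s\,d\epsilon_1\,\lambda_{\min}(\beta) - r\epsilon_2 > 0$, verifying \eqref{equation1} for $p = r$. Lemma \ref{keyvanishing1} then yields $H^q(X, K_X \otimes L \otimes \mathcal{I}(h)) = 0$ for $q \geq r$, and since $\epsilon_1,\epsilon_2$ are chosen small enough $\mathcal{I}(h) = \sO_X$, completing the proof.
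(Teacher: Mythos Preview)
Your outline diverges from the paper's proof precisely at the step you flag as the ``main obstacle,'' and that step is a genuine gap rather than a routine adaptation. The claim that $c_1(L) - d\,\pi^*\alpha$ is pseudoeffective for some $d>0$ is only established in \cite[Lemma~5.1]{Cao12} when the class $\alpha$ is pulled back from a \emph{curve}; there is no obvious extension to a higher-dimensional base, and invoking ``translation symmetry of $T$'' does not by itself supply one. The paper does not attempt to prove this directly. Instead it uses the structure result \cite[Prop.~2.2]{Cao12} to write $\pi_*(c_1(L)^{n-r+1}) = \lambda\,\varphi^*A$ for a fibration $\varphi\colon T\to S$ onto an abelian variety with $A$ very ample, then slices $S$ by general members of $|A|$: on each slice $X_p$ Lemma~\ref{lemmavanishing3} gives vanishing in the range $q\geq r-s+1$ (this lemma uses only $h_2$ and $h_\epsilon$, not an $h_1$-type metric), and the long exact sequences associated to the divisor restrictions reduce the original vanishing to the case $\dim S_p = 1$, where Proposition~\ref{lemmavanishing} and \cite[Lemma~5.1]{Cao12} do apply.

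In other words, the paper's hyperplane-section induction is precisely the device that replaces your missing $h_1$. If you could prove the pseudoeffectivity of $L - d\,\pi^*\alpha$ directly (for $\alpha$ of arbitrary rank $s\geq 1$), your argument would be shorter and more conceptual, and the eigenvalue analysis you sketch would go through; but as written, the proposal assumes the hardest step without justification.
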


\begin{remark}
By the same argument as in Proposition \ref{lemmavanishing}, we can easily prove that 
if $\nd (L) =n-r$, then 
$$
H^{q}(X, K_{X} \otimes L)=0 \qquad \forall \ q> r.
$$
\end{remark}

\begin{proof}[Proof of Theorem \ref{KVvanishing}]
Since $\nd (L)\geq n-r+1$, 
Lemma \ref{lemmanumericaldimension} implies that 
\begin{equation} \label{eqna}
\int_{T}\pi_{*}(c_{1}(L)^{n-r+1})\wedge\omega_{T}^{r-1}> 0
\end{equation}
for any K\"ahler class $\omega_{T}$.
Using the assumption that $T$ is a torus, 
we can represent the cohomology class $\pi_{*}(c_{1}(L)^{n-r+1})$ by
a constant $(1,1)$-form 
$\sum_{i=1}^{r}\lambda_{i}d z_{i}\wedge d\overline{z}_{i}$
on $T$.
Since \eqref{eqna} is valid for any K\"ahler class $\omega_{T}$, an elementary computation shows that 
$\lambda_{i}\geq 0$ for any $i$. Thus 
$\pi_{*}(c_{1}(L)^{n-r+1})$ is a semipositive 
(non trivial) class in $H^{1,1} (T) \cap H^2(T, \mathbb{Q})$.
Using \cite[Prop. 2.2]{Cao12}, we get a smooth fibration
$\varphi: T \rightarrow S$ 
where $S$ is an abelian variety of dimension $s$, and
$$\pi_{*}(c_{1}(-K_X)^{n-r+1})=\lambda \  \varphi^* A$$
for some $\lambda>0$ and a very ample divisor $A$ on $S$.

For every $p \in \{ 0, \ldots, s-1 \}$ let $S_{p}$ be a complete intersection of $p$ general divisors in 
the linear system $|A|$, and set $X_p:=\fibre{(\varphi \circ \pi)}{S_p}$ and $T_p:=\fibre{\varphi}{S_p}$.
Then we get a tower of fibrations
$$\begin{CD}
   X_{p} @>\pi|_{X_p}>> T_{p} @>\varphi|_{T_p}>> S_{p}
  \end{CD}
$$
and $X_{p}$ is smooth by Bertini's theorem.
Moreover, we have also the equality
$$
(\pi|_{X_p})_{*}(c_{1}(L)^{n-r+1})=\lambda\cdot (\varphi|_{T_p})^{*} A|_{S_p}. 
$$
Note that $(\varphi|_{T_p})^{*} A|_{S_p}$ is semiample of numerical dimension $\dim S_p$, so
we have
\begin{equation}\label{intervanishing}
H^q (X_p, K_{X_p} \otimes (L \otimes \pi^* \varphi^* A)|_{X_p})=0 \qquad \forall \ q\geq \dim T- \dim S +1.
\end{equation}
by Lemma \ref{lemmavanishing3}.
Using \eqref{intervanishing} and the exact sequence 
$$
0 \rightarrow K_{X_p} \otimes L|_{X_{p}} \rightarrow K_{X_p} \otimes (L \otimes \pi^* \varphi^* A)|_{X_p} \rightarrow K_{X_{p+1}} \otimes L|_{X_{p+1}} \rightarrow 0,
$$
an easy induction shows that
\begin{equation}\label{surjective}
H^{q-(s-1)}(X_{s-1}, K_{X_{s-1}} \otimes L|_{X_{s-1}}) \twoheadrightarrow
H^{q}(X, K_X \otimes L) \qquad \forall \ q \geq r. 
\end{equation}
Applying Proposition \ref{lemmavanishing} to $X_{s -1}$ and the line bundle $L|_{X_{s-1}}$, 
we get
$$
H^{q}(X_{s -1}, K_{X_{s -1}} \otimes L|_{X_{s-1}})=0 \qquad \forall \ q \geq \dim T_{s -1}.
$$
Since $\dim T_{s-1} = r-(s-1)$ we conclude by \eqref{surjective}.
\end{proof}

\begin{proof}[Proof of Proposition \ref{propositionnumericaldimension}]
If $\nd (-K_X)\geq n-r+1$, by taking $L=-K_X$ in Theorem \ref{KVvanishing}, 
we obtain $H^{r}(X, \sO_X)=0$. 
The pull-back of a non-zero holomorphic $r$-form from $T$ gives an contradiction.
\end{proof}

\section{Positivity of direct image sheaves} \label{sectionpositivity}

In this section we will prove that, under the conditions of Theorem \ref{theoremmain}, the direct image sheaves $\pi_* (\omega_X^{\otimes -m})$ are numerically flat vector bundles for $m \gg 0$. If the torus $T$ is projective this can be done by proving
that $\pi_* (\omega_X^{\otimes -m})$ is nef and numerically trivial on a general complete intersection curve. If $T$ is non-algebraic such a curve
does not exist, so we have to refine the construction. In Subsection \ref{subsectionsemistable} we introduce the tools necessary
to deal with the non-algebraic case, Subsection \ref{subsectionanticanonical} contains the core of our proof, the direct image argument.

\subsection{Semistable filtration on compact K\"ahler manifold} \label{subsectionsemistable}

Let us recall the following terminology:

\begin{definition} \cite[Defn.1.5.1]{HL97} \label{definitionjordanhoelder}
Let $(X, \omega_X)$ be a compact K\"ahler manifold, 
and let $\sG$ be a reflexive sheaf that is semistable with respect to $\omega_X$.
A Jordan-H\"older filtration is a filtration
$$
0 = \sG_0 \subset \sG_1 \subset \ldots \subset \sG_l = \sG
$$ 
such that the graded pieces $\sG_{i}/\sG_{i-1}$ are stable for all $i \in \{ 1, \ldots, l\}$.
\end{definition}

Every semistable sheaf admits a Jordan-H\"older filtration \cite[Prop.1.5.2]{HL97}. Moreover given a torsion-free $E$
with Harder-Narasimhan filtration
$$
0 = \sE_0 \subset \sE_1 \subset \ldots \subset \sE_m = E
$$
we can use the Jordan-H\"older filtration of every graded piece $\sE_{j}/\sE_{j-1}$ to obtain a refined filtration
\begin{equation} \label{stablefiltration}
0 = \sF_0 \subset \mathcal{F}_{1}\subset \mathcal{F}_{2} \subset \ldots \subset \mathcal{F}_{k}=E
\end{equation}
such that the graded pieces $\sF_{i}/\sF_{i-1}$ are stable for all $i \in \{ 1, \ldots, k \}$.
We call $\sF_\bullet$ the {\em stable filtration} of $E$ with respect to $\omega$.

\begin{lemma} \label{lemmafiltration}
Let $(X, \omega_X)$ be a compact K\"ahler manifold of dimension $n$, 
and let $\pi: X\rightarrow Y$ be a smooth fibration onto a curve $Y$.
Let $E$ be a nef vector bundle on $X$. Suppose that
$$
c_{1}(E)= M\cdot\pi^{*}\omega_{Y}
$$ 
for some constant $M$ and $\omega_Y$ a K\"ahler form on $Y$.
Let $\sF_\bullet$ be the stable filtration \eqref{stablefiltration} of $E$
with respect to $\pi^{*}\omega_{Y}+\epsilon\omega_{X}$ 
for some $0<\epsilon \ll 1$. 
Then 
$$ 
c_{1}(\mathcal{F}_{1})=a_{1}\cdot\pi^{*}(\omega_{Y}) 
$$
for some $a_{1}\geq 0$.
\end{lemma}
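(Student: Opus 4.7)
The plan is to expand slopes with respect to $\omega_\epsilon := \pi^{*}\omega_{Y}+\epsilon\omega_{X}$ as $\epsilon \to 0$, use numerical flatness of $E|_F$ on a general fibre, and then descend the resulting class from $X$ to $Y$.

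Since $Y$ is a curve, $(\pi^{*}\omega_{Y})^{2}=0$, hence
\[
\omega_{\epsilon}^{n-1}=\epsilon^{n-1}\omega_{X}^{n-1}+(n-1)\epsilon^{n-2}\,\pi^{*}\omega_{Y}\wedge\omega_{X}^{n-2}.
\]
Setting $C:=\int_{Y}\omega_{Y}>0$, any torsion-free subsheaf $\sG\subset E$ then satisfies
\[
\mu_{\omega_{\epsilon}}(\sG)=\frac{(n-1)C\epsilon^{n-2}}{\rank\sG}\deg_{\omega_{X}|_{F}}(\sG|_{F})+\frac{\epsilon^{n-1}}{\rank\sG}\,c_{1}(\sG)\cdot\omega_{X}^{n-1},
\]
and the $\epsilon^{n-2}$-term for $E$ itself vanishes since $c_{1}(E)\cdot\pi^{*}\omega_{Y}\cdot\omega_{X}^{n-2}=M(\pi^{*}\omega_{Y})^{2}\cdot\omega_{X}^{n-2}=0$. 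By Demailly--Peternell--Schneider, the nef bundle $E|_{F}$ with $c_{1}(E|_{F})=0$ is numerically flat, so every saturated subsheaf of $E|_{F}$ has non-positive $\omega_{X}|_{F}$-degree, with equality implying numerical flatness. Applied to $\sF_{1}$ and combined with $\mu_{\omega_{\epsilon}}(\sF_{1})\geq\mu_{\omega_{\epsilon}}(E)$, matching the $\epsilon^{n-2}$-terms forces $\deg_{\omega_{X}|_{F}}(\sF_{1}|_{F})=0$, hence $\sF_{1}|_{F}$ is numerically flat and $c_{1}(\sF_{1})|_{F}=0$ in $H^{1,1}(F,\R)$ for general $F$.

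To upgrade this fibre-wise vanishing to the claimed proportionality, the idea is to work with the torsion-free quotient $Q:=E/\sF_{1}$. As a quotient of the nef bundle $E$, the line bundle $\det Q$ is nef, and $c_{1}(Q)|_{F}=M\pi^{*}\omega_{Y}|_{F}-c_{1}(\sF_{1})|_{F}=0$. Invoking the Kähler descent principle that a nef $(1,1)$-class on $X$ whose restriction to the general fibre of $\pi$ vanishes is the pullback of a class from $Y$, and noting that any real $(1,1)$-class on the curve $Y$ is a real multiple of $\omega_{Y}$, we get $c_{1}(Q)=b\pi^{*}\omega_{Y}$ with $b\geq 0$. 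Therefore $c_{1}(\sF_{1})=(M-b)\pi^{*}\omega_{Y}=:a_{1}\pi^{*}\omega_{Y}$, and comparing the $\epsilon^{n-1}$-order terms in the slope inequality yields $a_{1}\geq\tfrac{\rank\sF_{1}}{\rank E}M\geq 0$.

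The main obstacle is the descent step in the last paragraph: in the projective setting it is the standard fact that a nef line bundle numerically trivial on the fibres of a fibration descends to the base, but in the Kähler category one must rule out contributions from the mixed Leray piece $H^{1}(Y,R^{1}\pi_{*}\R)$, which requires combining the positivity of $c_{1}(Q)$ with the Hodge structure on $H^{2}(X,\R)$.
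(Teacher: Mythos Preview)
Your overall strategy is the same as the paper's: restrict to a fibre to see the leading term of the slope vanishes, then use nefness of $\det(E/\sF_1)$ to descend. Two points need attention.

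\medskip

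\textbf{The $\epsilon$-matching step is not yet rigorous.} You write ``matching the $\epsilon^{n-2}$-terms forces $\deg_{\omega_X|_F}(\sF_1|_F)=0$'', but $\sF_1$ itself depends on $\epsilon$, so you cannot simply let $\epsilon\to 0$. What you actually get from $\mu_{\omega_\epsilon}(\sF_1)\geq\mu_{\omega_\epsilon}(E)$ and $\deg_{\omega_X|_F}(\sF_1|_F)\leq 0$ is
\[
0 \;\geq\; c_1(\sF_1)\cdot \pi^*\omega_Y\cdot\omega_X^{n-2} \;\geq\; -\epsilon\cdot(\text{something depending on }\sF_1).
\]
The paper closes this gap with two uniform facts: (a) $c_1(\sF)\cdot\omega_X^{n-1}$ is bounded above independently of the subsheaf $\sF\subset E$ \cite[Lemma 7.16]{Kob87}, and (b) the set of values $c_1(\sF)\cdot\pi^*\omega_Y\cdot\omega_X^{n-2}$ that are strictly negative is bounded away from $0$ \cite[Lemma 1.1]{Cao13}. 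With (a) the right-hand side above becomes $-\epsilon M'$ with $M'$ independent of $\epsilon$, and then (b) forces the middle term to be exactly $0$ once $\epsilon$ is small enough.

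\medskip

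\textbf{The descent step has a clean resolution you are missing.} You correctly identify the obstacle and propose to analyse the Leray filtration and Hodge structure on $H^2(X,\R)$. The paper avoids this entirely by applying the Hodge index theorem directly on $X$. Once you know $c_1(Q)\cdot\pi^*\omega_Y\cdot\omega_X^{n-2}=0$, observe that the bilinear form
\[
(\lambda,\mu)\;\longmapsto\;\int_X \lambda\wedge\mu\wedge\omega_X^{n-2}
\]
on $H^{1,1}(X,\R)$ has signature $(1,h^{1,1}-1)$ \cite[Thm.~A]{DN06}. Both $c_1(Q)$ and $\pi^*\omega_Y$ are nef, hence lie in the closure of the positive cone; $\pi^*\omega_Y$ is null for this form (since $(\pi^*\omega_Y)^2=0$) and is orthogonal to $c_1(Q)$. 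A null vector in a Lorentzian form is orthogonal only to vectors in its own line together with the negative-definite complement, so the nefness of $c_1(Q)$ forces $c_1(Q)$ to be a non-negative multiple of $\pi^*\omega_Y$. This is exactly the equality case of the Hovanskii--Teissier inequality (Remark~\ref{corHT}), and it replaces your proposed Leray argument by a two-line linear algebra computation.
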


\begin{proof}
If $E$ is stable the statement is obvious, so  suppose that $k \geq 2$.
For $y \in Y$ an arbitrary point, we denote by $X_y$ the fibre over $y$.
Since $c_{1}(E)=  \lambda \cdot\pi^{*}(\omega_{Y})$,
we have $c_{1}(E|_{X_y})=0$.
Then $E|_{X_y}$ is numerically flat, and by the proof of \cite[Thm 1.18]{DPS94},
for any reflexive subsheaf $\mathcal{F}\subset E$, we have
$$c_{1}(\mathcal{F}|_{X_y})\wedge (\omega_{X}|_{X_y})^{n-2}\leq 0 .$$
Therefore we get
\begin{equation} \label{equationfourminus}
c_{1}(\mathcal{F})\wedge \pi^{*} (\omega_{Y}) \wedge\omega_{X}^{n-2}\leq 0 \qquad \forall \ \mathcal{F}\subset E.
\end{equation}
Arguing as \cite[Lemma 1.1]{Cao13}, we see that
\begin{equation} \label{equationfour}
\sup\{ c_{1}(\mathcal{F})\wedge \pi^* (\omega_{Y}) \wedge(\omega_{X})^{n-2} 
\mid \mathcal{F}\subset E\text{ and }c_{1}(\mathcal{F})\wedge \pi^* (\omega_{Y}) \wedge(\omega_{X})^{n-2}< 0\} < 0.
\end{equation}
We claim that 
\begin{equation} \label{equationfive}
c_{1}(\mathcal{F}_{1})\wedge\pi^{*}(\omega_{Y})\wedge \omega_{X}^{n-2}=0 \qquad\text{and}\qquad
c_{1}(\mathcal{F}_{1})\wedge(\omega_{X})^{n-1}\geq 0.
\end{equation}
To prove the claim, we first notice that the nefness of $E$ implies that
\begin{equation} \label{equationstar}
c_{1}(\mathcal{F}_{1})\wedge(\pi^{*}\omega_{Y}+\epsilon\omega_{X})^{n-1}\geq 0. 
\end{equation}  
The base $Y$ being a curve we have
$$(\pi^{*}\omega_{Y}+\epsilon\omega_{X})^{n-1}=
\epsilon^{n-2}\pi^{*}(\omega_{Y})\wedge(\omega_{X})^{n-2}+\epsilon^{n-1}(\omega_{X})^{n-1}.$$
Note also that $c_{1}(\mathcal{F})\wedge(\omega_{X})^{n-1}$ is uniformly bounded from above for 
any $\mathcal{F}\subset E$, cf. \cite[Lemma 7.16]{Kob87}.
Then \eqref{equationstar} implies that 
$$c_{1}(\mathcal{F}_{1})\wedge\pi^{*}(\omega_{Y})\wedge(\omega_{X})^{n-2}\geq -\epsilon\cdot M$$
for a constant $M$ independent of $\epsilon$. 
Since $\epsilon$ is sufficiently small, the uniform estimate $\eqref{equationfour}$ and \eqref{equationfourminus} and imply that 
$$
c_{1}(\mathcal{F}_{1})\wedge\pi^{*}(\omega_{Y})\wedge \omega_{X}^{n-2}=0 .
$$
Using \eqref{equationstar} we deduce that $c_{1}(\mathcal{F}_{1})\wedge(\omega_{X})^{n-1}\geq 0$.
This proves the claim.

Combining \eqref{equationfive} with the assumption that $c_{1}(E)= M\cdot\pi^{*}\omega_{Y}$,
we get
$$c_{1}(E/\mathcal{F}_{1})\wedge\pi^{*}(\omega_{Y})\wedge \omega_{X}^{n-2}=0 .$$
Combining this with the fact that $c_{1}(E/\mathcal{F}_{1})$ is nef and $\omega_{Y}^{2}=0$, 
we obtain
\begin{equation} \label{equationsix}
c_{1}(E/\mathcal{F}_{1})= b \cdot\pi^{*}(\omega_{Y}) 
\end{equation}
for some $b \geq 0$ by Hodge index theorem 
(cf. Remark \ref{corHT} of Appendix \ref{appendixinequality}).
Thus we have $c_{1}(\sF_{1})= a_1 \cdot\pi^{*}(\omega_{Y})$ for some $a_1 \in \Q$ and
\eqref{equationfive} implies that $a_1 \geq 0$. 
\end{proof}

We come to the main result of this subsection:

\begin{proposition} \label{propositionfiltration}
In the situation of Lemma \ref{lemmafiltration}
the reflexive sheaves $\sF_i$ are subbundles of $E$, in particular they and the
graded pieces $\sF_{i}/\sF_{i-1}$ are locally free.
Moreover each of the graded pieces $\sF_{i}/\sF_{i-1}$ is projectively flat, and 
there exist a smooth metric $h_{i}$ on $\sF_{i}/\sF_{i-1}$, such that
$$i\Theta_{h_{i}}(\mathcal{F}_{i+1}/\mathcal{F}_{i})
=a_{i}\pi^{*}(\omega_{Y})\cdot \Id_{\mathcal{F}_{i+1}/\mathcal{F}_{i}}, $$
for some constant $a_{i}\geq 0$.
\end{proposition}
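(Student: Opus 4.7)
The strategy is induction on the filtration length. The base case establishes that $\sF_1$ is a locally free subbundle which is projectively flat and carries a metric with curvature $a_1 \pi^*\omega_Y \cdot \Id$; the inductive step then replaces $E$ by the nef quotient $E/\sF_1$ (whose first Chern class is again a multiple of $\pi^*\omega_Y$) and reapplies the Lemma together with the base-step argument. At every step the Lemma, applied to the current nef bundle, delivers $c_1(\sF_{i+1}/\sF_i) = a_{i+1} \pi^*\omega_Y$ with $a_{i+1} \geq 0$.

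For the base case, the stability of $\sF_1$ with respect to $\omega_\epsilon := \pi^*\omega_Y + \epsilon\omega_X$ allows us to invoke Bando-Siu and equip $\sF_1$ with an admissible Hermite-Einstein metric $h_1$. Since $Y$ is a curve, $(\pi^*\omega_Y)^2 = 0$, so the Lemma yields $c_1(\sF_1)^2 = 0$ as a cohomology class. The Bogomolov-L\"ubke inequality for admissible Hermite-Einstein sheaves then reduces to
\[
c_2(\sF_1) \cdot \omega_\epsilon^{n-2} \;\geq\; 0,
\]
with equality forcing $\sF_1$ to be locally free and projectively flat. The same reasoning applies to every subsequent graded piece $\sF_{i+1}/\sF_i$, now stable with $c_1$ proportional to $\pi^*\omega_Y$.

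The remaining task is to promote each of these inequalities to an equality. Summing over the filtration and using that every cross-term $c_1(\sF_{i+1}/\sF_i) \cdot c_1(\sF_{j+1}/\sF_j)$ vanishes (both factors are multiples of $\pi^*\omega_Y$ and $(\pi^*\omega_Y)^2 = 0$), we obtain
\[
c_2(E) \cdot \omega_\epsilon^{n-2} \;=\; \sum_i c_2(\sF_{i+1}/\sF_i) \cdot \omega_\epsilon^{n-2} \;\geq\; 0.
\]
The matching upper bound comes from Schur-positivity of nef vector bundles (Fulton-Lazarsfeld in the projective case, with the K\"ahler analogue due to Demailly-Peternell-Schneider): the class $s_{(2)}(E) = c_1(E)^2 - c_2(E)$ is pseudoeffective, and since $c_1(E)^2 = M^2 (\pi^*\omega_Y)^2 = 0$ this forces $c_2(E) \cdot \omega_\epsilon^{n-2} \leq 0$. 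Hence every summand vanishes; each graded piece is locally free and projectively flat, so each $\sF_i$ is a subbundle of $E$.

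Projective flatness forces $i\Theta_{h_i}(\sF_{i+1}/\sF_i) = \alpha_i \cdot \Id$ for some real $(1,1)$-form $\alpha_i$, and tracing together with the identification of $c_1(\sF_{i+1}/\sF_i)$ gives $\alpha_i = a_i \pi^*\omega_Y$ with $a_i \geq 0$, yielding the claimed curvature formula. The main obstacle is securing the K\"ahler version of Fulton-Lazarsfeld Schur-positivity in the absence of a projective polarization, since this is precisely what delivers the decisive upper bound on $c_2(E) \cdot \omega_\epsilon^{n-2}$; once that is in place the remainder of the argument reduces to standard Hermite-Einstein theory.
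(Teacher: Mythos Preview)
Your overall architecture---Bogomolov--L\"ubke on each stable graded piece, summed against the Chern-class inequality $c_1(E)^2 - c_2(E) \geq 0$ for nef bundles---matches the paper's Step~2 exactly (the paper cites \cite[Cor.~2.6]{DPS94} for the K\"ahler Schur positivity, so that is not the obstacle you feared). The genuine gap is a circularity in how you bootstrap local freeness.

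Your argument for $c_2(\sF_1)\cdot\omega_\epsilon^{n-2}=0$ requires the identity
\[
c_2(E)\cdot\omega_\epsilon^{n-2} \;=\; \sum_i c_2(\sF_{i+1}/\sF_i)\cdot\omega_\epsilon^{n-2},
\]
which in turn needs every cross-term $c_1(\sF_{i+1}/\sF_i)\cdot c_1(\sF_{j+1}/\sF_j)$ to vanish, i.e.\ every $c_1(\sF_{i+1}/\sF_i)$ to be a multiple of $\pi^*\omega_Y$. You obtain this by applying Lemma~\ref{lemmafiltration} to the successive quotients $E/\sF_i$; but that Lemma is stated and proved for a nef \emph{vector bundle}, so you need $E/\sF_i$ locally free, i.e.\ $\sF_i\subset E$ already a subbundle. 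That is precisely what your induction is supposed to deliver via the equality case of Bogomolov--L\"ubke. So the ``base case'' secretly consumes the full inductive conclusion.

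The paper breaks the loop by proving local freeness of $\sF_1$ \emph{before} any $c_2$ computation, using only the $c_1$ information from Lemma~\ref{lemmafiltration}: since $(\det\sF_1)|_{X_y}$ is numerically trivial and $(\bigwedge^{\rk\sF_1}E)|_{X_y}$ is numerically flat, the nonzero map $\det\sF_1\to\bigwedge^{\rk\sF_1}E$ is fibrewise an injection of vector bundles, hence $\sF_1$ is a subbundle by \cite[Lemma~1.20, Prop.~1.16]{DPS94}. Only after this inductive Step~1 is complete does the paper run your $c_2$ argument (Step~2). If you insert this determinant argument to establish local freeness first, the rest of your write-up goes through.
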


\begin{proof}

{\em Step 1. Proof of the first statement.}
We first prove the statement for $i=1$.
By \cite[Lemma 1.20]{DPS94} it is sufficient to prove that the induced morphism
$$
\det \sF_1 \rightarrow \bigwedge^{\rk \sF_1} E
$$
is injective as a morphism of vector bundles. Note now that the set $Z \subset X$ where
$\sF_1 \subset E$ is not a subbundle has codimension at least two: it is contained 
in the union of the loci where the torsion-free sheaves $\sF_{k+1}/\sF_k$ are not locally free. In particular $Z$ does not
contain any fibre $X_y:=\fibre{\pi}{y}$ with $y \in Y$. Thus for every $y \in Y$ the restricted morphism
\begin{equation} \label{inclusiondet}
(\det \sF_1)|_{X_y} \rightarrow (\bigwedge^{\rk \sF_1} E)_{X_y}
\end{equation}
is not zero. Yet by Lemma \ref{lemmafiltration} the line bundle $(\det \sF_1)|_{X_y}$ is numerically trivial
and the vector bundle $(\bigwedge^{\rk \sF_1} E)_{X_y}$ is numerically flat. Thus the inclusion \eqref{inclusiondet}
is injective as a morphism of vector bundles.
Then $\sF_1$ is a subbundle of $E$ \cite[Prop.1.16]{DPS94}.

Now $E/\sF_1$ is a nef vector bundle on $X$. Moreover, Lemma \ref{lemmafiltration} implies that 
$c_1 (E/\sF_1)=M'\cdot \pi^* (\omega_Y)$ for some constant $M'$.
Then we can argue by induction on $E/\sF_1$, and the first statement is proved.

{\em Step 2. The graded pieces are projectively flat.}
Applying Lemma \ref{lemmafiltration} to $E/\sF_i$, we obtain that $c_1(\sF_{i}/\sF_{i-1})=a_{i}\cdot \pi^* (\omega_Y)$ for some constant $a_i$, in particular
$c_1^2 (\sF_{i}/\sF_{i-1} )=0$.
Since $\sF_{i}/\sF_{i-1}$ is $(\pi^{*}\omega_{Y}+\epsilon\omega_{X})$-stable,
to prove that  $\sF_{i}/\sF_{i-1}$ is projectively flat, by \cite[Thm.4.7]{Kob87} it is sufficient to prove
that 
$$
c_2(\sF_{i}/\sF_{i-1}) \cdot (\pi^{*}\omega_{Y}+\epsilon\omega_{X} )^{n-2}=0 .
$$
Since $c_1(\sF_{i}/\sF_{i-1})$ is a pull-back from the curve $Y$ for every $i \in \{ 1, \ldots, k\}$ it is easy to see that
$$
c_2(E) = \sum_{i=1}^{k} c_2(\sF_{i}/\sF_{i-1}).
$$
Since we have $c_2(\sF_{i}/\sF_{i-1}) \cdot (\pi^{*}\omega_{Y}+\epsilon\omega_{X})^{n-2} \geq 0$ 
for every $i \in \{ 1, \ldots, k\}$ by \cite[Thm.4.7]{Kob87}, 
we are left to show that $c_2(E) \cdot (\pi^{*}\omega_{Y}+\epsilon\omega_{X})^{n-2}=0$. 
Yet $E$ is nef with $c_1(E)^2=0$, 
so this follows immediately from the Chern class inequalities
for nef vector bundles \cite[Cor.2.6]{DPS94}.
\end{proof}

\subsection{Positivity of $\pi_*(\omega_X^{\otimes -m})$}
\label{subsectionanticanonical}

Let $X$ be a normal compact K\"ahler space with at most terminal Gorenstein singularities, 
and let \holom{\pi}{X}{T} be a fibration such that $-K_X$ is $\pi$-nef and $\pi$-big, that
is $-K_X$ is nef on every fibre and big on the general fibre. In this case
the relative base-point free theorem holds \cite[Thm.3.3]{Anc87}, i.e. for every $m \gg 0$ the natural map
$$\pi^{*}\pi_{*}(\omega_X^{\otimes -m})\rightarrow \omega_X^{\otimes -m}$$
is surjective. Thus $\omega_X^{\otimes -m}$ is $\pi$-globally generated and induces a bimeromorphic morphism
\begin{equation} \label{eqnrelativemodel}
\holom{\mu}{X}{X'}
\end{equation}
onto a normal compact K\"ahler space $X'$. Standard arguments from the MMP show that the bimeromorphic
map $\mu$ is crepant, that is $K_{X'}$ is Cartier and we have
$$
K_X \simeq \mu^* K_{X'}.
$$
In particular $X'$ has at most canonical Gorenstein singularities. The fibration $\pi$ factors through the morphism $\mu$,
so we obtain a fibration
\begin{equation} \label{eqnrelativefibration}
\holom{\pi'}{X'}{T}
\end{equation}
such that $-K_{X'}$ is $\pi'$-ample. Therefore we call \holom{\mu}{X}{X'} the relative anticanonical model of $X$ and
\holom{\pi'}{X'}{T} the relative anticanonical fibration.

We start with an elementary computation:

\begin{lemma} \label{lemmaelementary}
Let $V$ be a nef vector bundle over a smooth curve $C$, and let $A \subset V$ be an ample subbundle.
Let $Z \subset \PP(V)$ be a subvariety such that $Z \not\subset \PP(V/A)$. Then we have
$$
Z \cdot \sO_{\PP(V)}(1)^{\dim V}>0.
$$
\end{lemma}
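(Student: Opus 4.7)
The plan is to exploit the tautological exact sequence on $\PP(V)$ together with a Chern class identity coming from the ample subbundle $A \subset V$. Write $H := \sO_{\PP(V)}(1)$ and let $f \colon \PP(V) \to C$ denote the structure morphism. The composition $f^*A \hookrightarrow f^*V \twoheadrightarrow H$ is a global section of $H \otimes f^*A^{\vee}$ whose scheme of zeros is precisely $\PP(V/A)$, a subvariety of pure codimension $r_A := \rk A$. Computing the top Chern class of this rank $r_A$ bundle, and using that $c_j(f^*A^{\vee}) = 0$ for $j \geq 2$ since $C$ is a curve, one obtains the identity
$$
[\PP(V/A)] \;=\; H^{r_A} \;-\; (\deg A)\,H^{r_A - 1}\cdot [F]
$$
in $A^*(\PP(V))$, where $[F] = f^*[\mathrm{pt}]$. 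This identity is the engine of the proof.

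First I would dispose of the degenerate case where $f(Z)$ is a point: then $Z \subset \PP(V_c) \cong \PP^{r-1}$ and $H|_Z$ is the restriction of the hyperplane bundle, which is very ample, so the desired intersection number equals $\deg Z > 0$. In the main case, where $f|_Z$ is surjective, I multiply the Chern class identity above by $[Z]\cdot H^{\dim Z - r_A}$ to obtain
$$
Z \cdot H^{\dim Z} \;=\; [Z]\cdot[\PP(V/A)]\cdot H^{\dim Z - r_A} \;+\; (\deg A)\cdot [Z]\cdot [F]\cdot H^{\dim Z - 1}.
$$
The second summand equals $(\deg A)\cdot \deg(Z_c)$, where $Z_c := Z \cap f^{-1}(c)$ is a general fibre of $f|_Z$ viewed inside $\PP^{r-1}$; this is strictly positive since $\deg A > 0$ (because $A$ is ample on $C$) and $Z_c$ is a nonempty projective subvariety of $\PP^{r-1}$. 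The first summand is nonnegative because the hypothesis $Z \not\subset \PP(V/A)$ forces the cycle class $[Z]\cdot [\PP(V/A)]$ to be pseudoeffective of dimension $\dim Z - r_A$, and pseudoeffective classes pair nonnegatively with the nef class $H^{\dim Z - r_A}$. Summing these two contributions yields the strict positivity we want.

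The main obstacle is making rigorous the nonnegativity of the first summand when $[Z]$ and $[\PP(V/A)]$ fail to meet properly. Since $\PP(V)$ is a smooth projective variety (in fact a projective bundle over a curve), one may always move $[Z]$ in its rational equivalence class to intersect $[\PP(V/A)]$ properly, after which the intersection cycle is literally effective of the expected dimension; alternatively, one may resolve the rational map $\psi \colon Z \dashrightarrow \PP(A)$ induced by the surjection $f^*A \twoheadrightarrow H$ on the complement of $\PP(V/A)$ and use that the fibres of the associated morphism $\PP(V)\setminus\PP(V/A) \to \PP(A)$ are affine spaces to control exceptional contributions in the resolution.
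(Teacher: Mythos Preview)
Your Chern class identity $[\PP(V/A)] = H^{r_A} - (\deg A)\,H^{r_A-1}\cdot[F]$ is correct, and the decomposition
$$
Z\cdot H^{\dim Z} \;=\; [Z]\cdot[\PP(V/A)]\cdot H^{\dim Z - r_A} \;+\; (\deg A)\cdot[Z]\cdot[F]\cdot H^{\dim Z - 1}
$$
is valid when $\dim Z \geq r_A$. The second summand is indeed strictly positive. The gap is the nonnegativity of the first summand: the hypothesis $Z \not\subset \PP(V/A)$ does \emph{not} force $[Z]\cdot[\PP(V/A)]$ to be pseudoeffective, and the moving lemma does not rescue this. The condition $Z \not\subset \PP(V/A)$ says nothing about $Z \supset \PP(V/A)$, and in that case excess intersection can produce a negative number.

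Here is an explicit counterexample. Take $C=\PP^1$, $A=\sO(1)^{\oplus 2}$, $V=A\oplus\sO$, so $\PP(V/A)\cong C$ is a section and $r_A=2$. Let $Z=\PP(V/L)$ for one of the summands $L=\sO(1)\subset A$; this is an irreducible surface (a Hirzebruch surface $\mathbb{F}_1$) containing the section $\PP(V/A)$, with class $[Z]=H-[F]$. A direct computation in $A^*(\PP(V))$ gives $[Z]\cdot[\PP(V/A)]=-1$. Moreover $|H-[F]|=\PP H^0(C,V(-1))$ is a pencil and \emph{every} member contains $\PP(V/A)$, so there is no effective cycle rationally equivalent to $Z$ meeting $\PP(V/A)$ properly; your moving-lemma argument cannot apply. (Of course $Z\cdot H^2=1>0$, so the lemma itself holds here; it is only your decomposition that fails.)

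Your suggested alternative---resolving the rational map $\PP(V)\dashrightarrow\PP(A)$---is exactly what the paper does. The paper blows up $\PP(V/A)$ to obtain $\mu\colon X\to\PP(V)$ with a second projection $h\colon X\to\PP(A)$, proves the key relation $\mu^*H=h^*\sO_{\PP(A)}(1)+E$ with $E$ the exceptional divisor, and then replaces one factor of $\mu^*H$ at a time by $h^*\sO_{\PP(A)}(1)$ in $(\mu^*H)^{\dim Z}\cdot Z'$, each substitution discarding a nonnegative term $(\mu^*H)^{\bullet}\cdot(h^*\sO_{\PP(A)}(1))^{\bullet}\cdot E\cdot Z'$. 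The induction terminates because $\sO_{\PP(A)}(1)$ is genuinely ample (this is where ampleness of $A$ enters) and $\mu^*H$ is $h$-relatively ample. This telescoping avoids ever intersecting $Z$ with $\PP(V/A)$ directly and so sidesteps the excess-intersection issue entirely.
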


\begin{proof} 
Let $\holom{f}{\PP(V)}{C}$ and $\holom{g}{\PP(A)}{C}$ be the canonical projections, and let 
$\holom{\mu}{X}{\PP(V)}$ be the blow-up along the subvariety $\PP(V/A)$. 
The restriction of $\mu$ to any $f$-fibre \fibre{f}{c} is the blow-up of a projective space $\PP(V_c)$
along the linear subspace $\PP(V_c/A_c)$, so we see that we have a fibration
$\holom{h}{X}{\PP(A)}$ which makes $X$ into a projective bundle over $\PP(A)$.
$$
\xymatrix{
Z'\ar[d]\ar[r]& X \ar[d]^{\mu}\ar[r]^{h} & \PP(A)\ar[ldd]^{g}\\
Z\ar[r]       & \PP(V)\ar[d]^{f} & \\
              & C
}
$$
Since $Z \not\subset \PP(V/A)$, the strict transform $Z'$ is well-defined and we have
$$
\sO_{\PP(V)}(1)^{\dim Z}  \cdot Z = \mu^* \sO_{\PP(V)}(1)^{\dim Z} \cdot Z'.
$$
We claim that
\begin{equation} \label{decompoone}
\mu^* \sO_{\PP(V)}(1) \simeq h^* \sO_{\PP(A)}(1) + E,
\end{equation}
where $E$ is the $\mu$-exceptional divisor. Indeed we can write
$$
\mu^* \sO_{\PP(V)}(1) \simeq a h^* \sO_{\PP(A)}(1) + b E + c F,
$$
where $F$ is a $f \circ \mu$-fibre and $a,b,c \in \Q$. By restricting to $F$ one easily sees 
that we have $a=1, b=1$. Note now (for example by looking at the relative Euler sequence) that we have
$
N_{\PP(V/A)/\PP(V)} \simeq f^* A^* \otimes \sO_{\PP(V/A)}(1)$.
Since the exceptional divisor $E$ is the projectivisation of $N_{\PP(V/A)/\PP(V)}^*$ we deduce that
$$
- E|_E \simeq \sO_{\PP(f^* A \otimes \sO_{\PP(V/A)}(-1))}(1) \simeq (h^* \sO_{\PP(A)}(1))|_E + \mu|_E^* \sO_{\PP(V/A)}(-1).
$$
Since $\mu^* \sO_{\PP(V)}(-1)|_E \simeq \mu|_E^* \sO_{\PP(V/A)}(-1)|_E$ we obtain $c=0$.

In order to simplify the notation, set $\xi_V:=\mu^* \sO_{\PP(V)}(1)$ and $\xi_A:=h^* \sO_{\PP(A)}(1)$.
By \eqref{decompoone} we have
$$
\xi_V^{\dim Z} \cdot Z' = \xi_V^{\dim Z-1} \cdot  \xi_A \cdot Z'
+ \xi_V^{\dim Z-1} \cdot E \cdot Z'.
$$
Since $E$ does not contain $Z'$, the two terms on the right hand side are non-negative. If $\xi_V^{\dim Z-1} \cdot E \cdot Z'>0$
we are obviously finished, so suppose that this is not the case. Let $e$ be the dimension of $h(E \cap Z')$. 
Since $\sO_{\PP(A)}(1)$ is ample and $\xi_V$ is $h$-ample, we have
$$
\xi_V^{\dim Z-e-1} \cdot \xi_A^{e} \cdot E \cdot Z'>0.
$$
Thus
$$
l := \min \{ j \in \N \ | \ \xi_V^{\dim Z-j-1} \cdot \xi_A^j \cdot E \cdot Z'>0 \}.
$$
is an integer. An easy induction now shows that
$$
\xi_V^{\dim Z} \cdot Z' = \xi_V^{\dim Z-l-1} \cdot  \xi_A^{l+1} \cdot Z' +
\xi_V^{\dim Z-l-1} \cdot \xi_A^l \cdot E \cdot Z'>0.
$$
\end{proof}

\begin{lemma} \label{lemmaflat}
Let $X$ be a compact K\"ahler manifold of dimension $n$ such that $-K_X$ is nef.
Let $\pi: X \rightarrow T$ be the Albanese fibration, and suppose that $-K_X$ is $\pi$-big. 
Let $\holom{\pi'}{X'}{T}$ be the relative anticanonical fibration. 

Then $\pi'$ is flat and $E_{m}:=(\pi')_{*}(\omega_{X'}^{\otimes -m})$ is locally free for $m \in \N$.
\end{lemma}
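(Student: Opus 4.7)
The plan is to first use Proposition \ref{propositionnumericaldimension} to obtain the key numerical bound $(-K_{X})^{n-r+1}=0$ in $H^{2(n-r+1)}(X,\R)$, then exploit this to prove equidimensionality of $\pi'$, and finally deduce flatness and local freeness of the direct images by standard techniques.

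The heart of the argument is showing that $\pi'$ is equidimensional, which I would establish by contradiction. Suppose some irreducible component $F_{0}$ of a fibre $(\pi')^{-1}(t_{0})$ has dimension $d>n-r$, and choose an irreducible subvariety $Z'\subseteq F_{0}$ of dimension exactly $n-r+1$. Since $-K_{X'}$ is $\pi'$-ample, the restriction $-K_{X'}|_{Z'}$ is ample. Let $Z\subseteq X$ be an irreducible component of $\mu^{-1}(Z')$ which dominates $Z'$ (such a component exists since $\mu$ is surjective and $Z'$ is irreducible). Because $K_{X}=\mu^{*}K_{X'}$, the class $-K_{X}|_{Z}$ is the pullback along $\mu|_{Z}\colon Z\to Z'$ of an ample class on $Z'$, and hence is nef on $Z$ with numerical dimension exactly $\dim Z'=n-r+1$. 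Passing to a resolution of $Z$ and using that $\omega_{X}|_{Z}$ is K\"ahler on $Z$, Remark \ref{remarknumericaldimension} yields
\[
\int_{Z} (-K_{X}|_{Z})^{n-r+1}\wedge (\omega_{X}|_{Z})^{\dim Z-(n-r+1)}>0.
\]
On the other hand, this integral equals the cohomological intersection $(-K_{X})^{n-r+1}\cdot \omega_{X}^{\dim Z-(n-r+1)}\cdot [Z]$ on $X$, which vanishes because $(-K_{X})^{n-r+1}=0$. This contradiction forces $\pi'$ to be equidimensional.

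Once equidimensionality is established, flatness of $\pi'$ follows from the standard fact that a proper morphism from a Cohen-Macaulay space to a smooth base is flat if and only if its fibres have constant dimension; here $X'$ is Cohen-Macaulay because its canonical Gorenstein singularities are so, and $T$ is smooth. For local freeness of $E_{m}$, I would apply relative Kawamata-Viehweg vanishing to the klt pair $(X',0)$ and the $\pi'$-ample class $-K_{X'}$, yielding $R^{i}\pi'_{*}\omega_{X'}^{\otimes -m}=0$ for all $i>0$ and $m\in\N$. Combined with flatness of $\pi'$ and $\omega_{X'/T}=\omega_{X'}$ (since $T$ is a torus), Grauert's base change theorem then ensures that $E_{m}$ is locally free, with fibre at $t\in T$ canonically identified with $H^{0}(X'_{t},\omega_{X'_{t}}^{\otimes -m})$.

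The main difficulty is the strict positivity of the intersection integral on the possibly singular $Z$. One handles this by pulling back to a resolution $\tilde{Z}\to Z$ and observing that, even though the pullback of $\omega_{X}|_{Z}$ is only semipositive along the exceptional locus, it is nef and big on $\tilde{Z}$; hence it dominates a small K\"ahler class in the cone of positive currents, and Remark \ref{remarknumericaldimension} still provides the required strict positivity.
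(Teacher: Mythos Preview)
Your proposal is correct and follows essentially the same approach as the paper: use Proposition~\ref{propositionnumericaldimension} to get $(-K_X)^{n-r+1}=0$, deduce equidimensionality of $\pi'$, then conclude flatness from Cohen--Macaulayness of $X'$ over the smooth base $T$, and local freeness from relative Kawamata--Viehweg vanishing plus base change.

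The only substantive difference is in the equidimensionality step, where you take an unnecessarily circuitous route. You pull a subvariety $Z'\subset F_0$ back to $X$ via $\mu$ and then wrestle with the fact that $\omega_X$ pulls back to something merely nef and big on a resolution of $Z$. The paper avoids all of this by working directly on $X'$: since $\mu$ is crepant and birational, $(-K_{X'})^{n-r+1}=0$ holds on $X'$ as well; but $-K_{X'}|_{F'}$ is ample on any irreducible fibre component $F'$, so $(-K_{X'}|_{F'})^{\dim F'}>0$, forcing $\dim F'\leq n-r$ immediately. This bypasses the choice of $Z'$, the pullback to $X$, the resolution, and the big-nef-versus-K\"ahler issue you flag in your final paragraph.
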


\begin{proof}
The variety $X'$ has at most canonical singularities, so it is Cohen-Macaulay. 
The base $T$ being smooth it is sufficient to prove that $\pi'$ is equidimensional \cite[III,Ex.10.9]{Har77}. 
Set $r:=\dim T$. 
By Proposition \ref{propositionnumericaldimension} we know that 
$$
(-K_X)^{n-r+1}=(-K_{X'})^{n-r+1}=0.
$$
If $F' \subset X'$ is an irreducible component of a $\pi'$-fibre, we have
$(-K_{X'}|_{F'})^{\dim F'} \neq 0$
since $-K_{X'}|_{F'}$ is ample. By the preceding equation we see that $\dim F' \leq n-r$.

Since $X'$ has at most canonical singularities, 
the relative Kawamata-Viehweg theorem applies and shows that
$R^j (\pi')_{*}(\omega_{X'}^{\otimes -m})=0$ for all $j>0$.
The fibration $\pi'$ being flat, the statement follows.
\end{proof}

\begin{lemma} \label{lemmanef}
In the situation of Lemma \ref{lemmaflat}, the vector bundle $E_{m}$ is nef for $m \gg 1$.
\end{lemma}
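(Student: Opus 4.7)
The plan is to rewrite $E_{m}$ as a direct image on the smooth manifold $X$, apply a Berndtsson--P\u{a}un type positivity theorem for direct images of nef--twisted relative canonical sheaves, and then convert the resulting singular Hermitian metric with semi--positive curvature into genuine nefness.

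First I would identify $E_{m}$ with a direct image on $X$. Since $\mu\colon X\to X'$ is crepant and $X'$ has canonical (hence rational) singularities, $\mu_{*}\sO_{X}=\sO_{X'}$, so the projection formula gives $\mu_{*}(\omega_{X}^{\otimes -m}) = \omega_{X'}^{\otimes -m}$ and therefore
\[
E_{m} \;=\; (\pi')_{*}(\omega_{X'}^{\otimes -m}) \;=\; \pi_{*}(\omega_{X}^{\otimes -m}).
\]
Using that $K_{T}=0$, one writes $\omega_{X}^{\otimes -m} \simeq K_{X/T}\otimes L$ with $L:=\omega_{X}^{\otimes -(m+1)}$ nef. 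The nefness of $L$ yields a singular Hermitian metric $h_{L}$ with semi--positive curvature and trivial multiplier ideal $\sI(h_{L})=\sO_{X}$, obtained as a weak limit of smooth metrics with curvature $\geq -\varepsilon\omega_{X}$. A Berndtsson--P\u{a}un type theorem then endows $E_{m}=\pi_{*}(K_{X/T}\otimes L)$ with a canonical singular Hermitian metric whose curvature is Griffiths semi--positive.

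Next I would upgrade this semi--positivity to nefness. In the projective case one restricts to a general complete intersection curve $C\subset T$ and invokes P\u{a}un's generic nefness theorem for such direct images to conclude at once. In the general K\"ahler case one instead tests $E_{m}$ on arbitrary holomorphic maps $f\colon C\to T$ from smooth compact curves: the pullback metric on $f^{*}E_{m}$ is Griffiths semi--positive with Lelong numbers controlled by those of $h_{L}$, which vanish, so the induced metric on $\sO_{\PP(f^{*}E_{m})}(1)$ is nef. Hence every quotient line bundle of $f^{*}E_{m}$ has non--negative degree and $f^{*}E_{m}$ is nef on $C$; since this holds for all such $f$, nefness of $E_{m}$ on $T$ follows.

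The main obstacle I anticipate is precisely this last passage from a positively curved singular metric to honest nefness in the non--algebraic setting, where general curves through every point are not available and Berndtsson--P\u{a}un a priori only delivers pseudo--effectivity. The hypothesis that $-K_{X}$ is nef, rather than merely pseudoeffective, is essential here: it allows $h_{L}$ to be chosen with arbitrarily small Lelong numbers, a control which survives pullback and yields the required $\varepsilon$--estimates on $\sO_{\PP(E_{m})}(1)$. This is also where the structural results of Subsection~\ref{subsectionsemistable} on nef vector bundles over tori may enter, via a reduction to stable graded pieces whose nefness is amenable to direct verification.
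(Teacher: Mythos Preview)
Your identification $E_{m}=\pi_{*}(\omega_{X}^{\otimes -m})$ and the rewriting as $K_{X/T}\otimes L$ with $L$ nef are correct, and a Berndtsson--P\u{a}un type argument does produce a singular Griffiths semi-positive metric on $E_{m}$. In the projective case your restriction-to-curves argument is fine and is essentially the content of Lemma \ref{lemmakltdirectimage}. The genuine gap is in the K\"ahler case, and you have identified it yourself but not closed it.

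The problem is that nefness on a compact K\"ahler manifold is \emph{not} detected by restriction to curves: by Definition \ref{definitionnef} it means that for every $\varepsilon>0$ there is a \emph{smooth} metric on $\sO_{\PP(E_{m})}(1)$ with curvature $\geq -\varepsilon\,\omega$. On a generic non-algebraic torus $T$ there are no non-constant holomorphic maps $f\colon C\to T$ from compact curves at all, so your test is vacuous and cannot distinguish nef from merely pseudoeffective. The Lelong-number control you invoke is also insufficient: even if $h_{L}$ has arbitrarily small Lelong numbers, the induced Berndtsson--P\u{a}un metric on $E_{m}$ is still singular, and there is no general mechanism to regularise it to a smooth metric with a uniform curvature lower bound. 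The results of Subsection \ref{subsectionsemistable} do not help here either; they are used downstream to pass from nef to numerically flat, not to establish nefness.

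The paper closes this gap by an entirely different device that you have missed: the isogeny trick. One takes the multiplication-by-$2^{k}$ maps $\varphi_{k}\colon T\to T$, base-changes to $X_{k}:=X\times_{T}T$, and constructs by a Bergman-kernel / $L^{2}$-gluing argument (this is the technical heart, isolated as Lemma \ref{gluing}) a smooth metric on $\sO_{\PP(E_{m,k})}(1)$ with curvature $\geq -C\,\pi_{1}^{*}\omega_{T}$ for a constant $C$ \emph{independent of $k$}. Pushing back down along $\varphi_{k}$ divides the curvature defect by $2^{k-1}$, producing on $\sO_{\PP(E_{m})}(1)$ a smooth metric with curvature $\geq -\frac{C}{2^{k-1}}\omega_{T}$; letting $k\to\infty$ gives nefness directly from the definition. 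The key inputs are \cite[Thm.~0.5]{DP04} (to get a metric positive along fibres at the cost of a pull-back from $T$) and the uniformity of the Stein cover and partition of unity on $T$ under isogeny. None of this structure is visible in a Berndtsson--P\u{a}un approach.
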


\begin{remark}
If the fibration is smooth and the torus $T$ is abelian, 
the nefness is proved in \cite[Lemma 3.21]{DPS94}.
Since $T$ is an arbitrary torus and $\pi$ is -a priori- not necessarily smooth, 
we use \cite[Thm. 0.5]{DP04} and the standard regularization method (cf. \cite[Ch.13]{Dem12}, \cite[Sect.3]{Dem92})
to overcome these difficulties.  
\end{remark}

\begin{proof}[Proof of Lemma \ref{lemmanef}]

We first notice that $-K_X =\mu^* (-K_{X'})$ by construction.
Therefore $E_{m}=\pi_{*}(\omega_X^{\otimes -m})$.
We first fix a Stein cover $\mathcal{U}=\{U_{i}\}$ on $T$ as constructed in \cite[13.B]{Dem12}
\footnote{We keep the notations in \cite[13.B]{Dem12}, which can also be found in \cite[Sect. 3]{Dem92} .}, 
such that $U_{i}$ are simply connected balls of radius $2\delta$ fixed.
Let $U_i^{'}\Subset U_i{''}\Subset U_{i}$ be the balls constructed in \cite[13.B]{Dem12} such that they are the balls of radius 
$\delta$, $\frac{3}{2}\delta$, $2\delta$ respectively
and $\{ U_i^{'} \}$ also covers $T$.
Let $\theta_{j}$ be a smooth partition function with support in $U_{j}''$ as constructed in \cite[Lemma 13.11]{Dem12}.
Let $\varphi_{k}: T\rightarrow T$ be a $2^k$-degree isogeny of the torus $T$, and set $X_k :=T\times_{\varphi_k} X$.
Let $L=-(m+1) K_{X_k/T}$ and set $E_{m, k} :=\pi_{*}(K_{X_k}+L)$.
We have the commutative diagram
$$\xymatrix{
&X_k\ar[r]^{\widetilde{\varphi}_k}\ar[d]_{\widehat{\pi}}& X\ar[d]_{\pi}
\\ \PP (E_{m, k})\ar[r]^{\pi_{1}} &T\ar[r]^{\varphi_k} & T 
\\ & U_i\ar@{^{(}->}[u] &}$$
Note that the cover $\mathcal{U}=\{U_{i}\}$, and the partition functions $\theta_{i}$ are independent of $k$.
We first prove that there exists a smooth metric $h$ on $\mathcal{O}_{\PP(E_{m, k})} (1)$, 
such that 
$$i\Theta_{h}(\mathcal{O}_{\PP(E_{m, k})} (1))\geq -C\cdot \pi_1 ^{*}( \omega_{T} )$$
for a constant $C$ independent of $k$
\footnote{ All the constants $C, C_{1},\cdots, C_{i}$ below are independent of $k$.}.

We fix a K\"ahler metric $\omega_{X_k}$ on $X_k$.
Since $L$ is nef and $\widehat{\pi}$-big, 
\cite[Thm. 0.5]{DP04} implies the existence of a singular metric $h_{\widetilde{\epsilon}_k }$ on $L$
such that 
$$i\Theta_{h_{\widetilde{\epsilon}_k}}(L)\geq \widetilde{\epsilon}_k  \omega_{X_k}- C_{1}\widehat{\pi}^*( \omega_{T} ),$$
for a constant $C_{1}$ independent of $k$, but $\widetilde{\epsilon}_k > 0$ is dependent of $k$.
Since $L$ is nef, for any $\epsilon > 0$, there exists a metric $h_{\epsilon}$ such that
$$i\Theta_{h_{\epsilon}}(L)\geq -\epsilon\omega_{X_k} .$$
By combining these two metrics, we can easily construct a new metric $h_{\epsilon_k}$ on $L$,
such that\footnote{We just need to take $h_{\epsilon_k}=h_{\widetilde{\epsilon}_k }^{r_k}\cdot h_{\epsilon}^{1-r_k}$
for some $r_k$ small enough, and $\epsilon\ll r_k\cdot \widetilde{\epsilon}_k$.}
\begin{equation}\label{firstsingularmetric}
i\Theta_{h_{\epsilon_k}}(L)\geq \epsilon_k  \omega_{X_k}- 2\cdot C_{1}\widehat{\pi}^*(\omega_{T})\qquad
\text{and }\qquad \mathcal{I} (h_{\epsilon_k})=\mathcal{O}_{X_k}
\end{equation}
for some $\epsilon_k > 0$.
Since $\mathcal{I} (h_{\epsilon_k})=\mathcal{O}_{X_k}$ and $U_i$ are simply connected Stein varieties, 
we can suppose that $L^2$-bounded (with respect to $h_{\epsilon_k}$) 
elements in $H^{0}(\widehat{\pi}^{-1}(U_{i}), K_{X_k}+L)$
generate $E_{m,k}$ over $U_i$. 

Let $\{\widehat{e}_{i , j}\}_{j}$ be an orthonormal base of $H^{0}(\widehat{\pi}^{-1}(U_{i}), K_{X_k}+L)$ 
with respect to $h_{\epsilon_k}$,
i.e., $\int_{\widehat{\pi}^{-1}(U_{i})} \langle\widehat{e}_{i , j}, \widehat{e}_{i , j'}\rangle_{h_{\epsilon,k}}^2=\delta_{j, j'}$.
Then $\widehat{e}_{i , j}$ induce an element $e_{i,j}\in H^{0}(\pi_{1}^{-1}(U_i), \O_{\PP(E_{m,k})} (1))$.
We now define a smooth metric $h_{i}$ on $\O_{\PP(E_{m,k})} (1) $ over $\pi_{1}^{-1}(U_i)$ by 
$$\|\cdot\|_{h_i}^2=\frac{\|\cdot\|_{h_{0}}^2}{\sum\limits_{j}\|e_{i, j}\|_{h_{0}}^2} ,$$
where $h_{0}$ is a fixed metric on $\O_{\PP(E_m)} (1) $.
Thanks to the construction,
$h_i$ is smooth and $i\Theta_{h_i}(\O_{\PP(E_{m,k})} (1) )$ is semi-positive on $\pi_{1}^{-1}(U_i)$.

We claim that 
\begin{equation}\label{equation75}
\frac{1}{C_{2}}\leq \frac{\sum\limits_{j} \|e_{i, j}\|_{h_{0}}^2 (z)}{\sum\limits_{j} \|e_{i', j}\|_{h_{0}}^2 (z)}\leq C_{2}
\qquad \text{for }z\in \pi_{1}^{-1}(U_{i}''\cap U_{i'}'') ,
\end{equation}
for some $C_{2}> 0$ independent of $z, k, i, i'$.
The proof is almost the same as in \cite[Lemma 13.10]{Dem12}, except that we use the metric 
$\epsilon_k\cdot\omega_{X_k}+\widehat{\pi}^{*}\omega_T$
in stead of $\omega_X$ in the estimate.
We postpone the proof of the claim \eqref{equation75} in Lemma \ref{gluing}
and first finish the proof of Lemma \ref{lemmanef}.

We now define a global metric $h$ on $\mathcal{O}_{\PP(E_{m})} (1)$ by
$$\|\cdot\|_{h}^{2}=\|\cdot\|_{h_{0}}^{2} e^{- \sum\limits_{i} (\pi_1 ^{*}(\theta_{i}'))^{2}\cdot\ln (\sum\limits_{j} \|e_{i, j}\|_{h_{0}}^2)} ,
\qquad\text{where }(\theta_{i}')^{2}=\frac{\theta_{i}^{2}}{\sum\limits_{k} \theta_{k}^{2}} .$$
Note that
$$i (\theta_{j}'\partial\overline{\partial} \theta_{j}'-\partial\theta_{j}'\wedge \overline{\partial} \theta_{j}' )
\geq -C_{3}\cdot\omega_{T}$$
by construction.
Combining this with \eqref{equation75} and  
applying the Legendre identity in the proof of 
\cite[Lemma 13.11]{Dem12}\footnote{Although in the proof of \cite[Lemma 13.11]{Dem12}, $\theta_{i}'$ is supposed to be constant on $U_i^{'}$,
the uniformly strictly positive of the lower boundedness of $\theta_{i}'$ on $U_{i}'$ is sufficient for the proof.},
we obtain that
$$i\Theta_{h}(\mathcal{O}_{\PP(E_{m,k})} (1))\geq -C\cdot \pi_1 ^{*}(\omega_{T})$$
for a constant $C$ independent of $k$.

By \cite[Prop. 1.8]{DPS94}, the metric $h$ on $\mathcal{O}_{\PP(E_{m,k})} (1)$ induce a smooth metric $h_k$
on $\mathcal{O}_{\PP(E_{m})} (1)$ such that
$$i\Theta_{h_k}(\mathcal{O}_{\PP(E_{m})} (1))\geq -\frac{C}{2^{k-1}}\omega_{T}.$$
The lemma is proved by letting $k\rightarrow +\infty$.
\end{proof}

We now prove the claim \eqref{equation75} in Lemma \ref{lemmanef}, 
which is in some sense a relative gluing estimate.

\begin{lemma}\label{gluing} In the situation of the proof of  Lemma \ref{lemmanef},
we have
\begin{equation}\label{equationlemmagluing}
\frac{1}{C_{2}}\leq \frac{\sum\limits_{j} \|e_{i, j}\|_{h_{0}}^2 (z)}{\sum\limits_{j} \|e_{i', j}\|_{h_{0}}^2 (z)}\leq C_{2}
\qquad \text{for }z\in \pi_{1}^{-1}(U_{i}''\cap U_{i'}'').
\end{equation} 
\end{lemma}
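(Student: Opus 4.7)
The plan is to imitate the proof of \cite[Lemma 13.10]{Dem12} line-for-line, the only substantive change being the Kähler metric that measures pointwise and $L^2$ norms of sections: in place of a fixed ambient metric I will use the $k$-dependent twisted metric
$$
\omega_k := \epsilon_k \omega_{X_k} + \widehat{\pi}^{*}\omega_{T}
$$
on $X_k$. The curvature estimate \eqref{firstsingularmetric} yields
$$
i\Theta_{h_{\epsilon_k}}(L) + 2 C_1 \widehat{\pi}^{*}\omega_{T}\ \geq\ \epsilon_k \omega_{X_k}\ \geq\ 0,
$$
so the negative part of the curvature of $h_{\epsilon_k}$ is concentrated in base directions and is bounded by a $k$-independent multiple of $\widehat{\pi}^{*}\omega_{T}$. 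This is exactly the positivity needed to feed into an Ohsawa--Takegoshi style extension theorem with a uniform constant.

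First, I would reinterpret the quantity $\sum_{j}\|e_{i,j}\|_{h_{0}}^{2}(z)$ as a Bergman-kernel value. For $z\in \pi_1^{-1}(t)$ with $t\in U_i$, the point $z$ determines (up to the fixed metric $h_{0}$) a linear functional on the fibre $(E_{m,k})_{t} = H^{0}(\widehat{\pi}^{-1}(t),(K_{X_k}+L)|_{\widehat{\pi}^{-1}(t)})$, evaluated at some $x \in \widehat{\pi}^{-1}(t)$. Up to a factor pinched between constants depending only on $h_0$ and $\omega_T$ (hence $k$-independent), the quantity equals
$$
\sup_{\sigma}\ \frac{|\sigma(x)|_{h_{\epsilon_k}}^{2}}{\int_{\widehat{\pi}^{-1}(U_i)}|\sigma|_{h_{\epsilon_k}}^{2}\, dV_{\omega_k}},
$$
the sup ranging over $\sigma\in H^{0}(\widehat{\pi}^{-1}(U_{i}),K_{X_k}+L)$. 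The analogous formula holds for $U_{i'}$ and for the same point $x$, since $z\in \pi_1^{-1}(U_i''\cap U_{i'}'')$ lies over $t \in U_i''\cap U_{i'}''$.

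Second, to compare the two Bergman kernels I would transfer sections between $\widehat{\pi}^{-1}(U_i)$ and $\widehat{\pi}^{-1}(U_{i'})$. Pick a near-extremal $\sigma_{i'}$ for the quantity over $U_{i'}$. Since $t\in U_i''\cap U_{i'}''$ and $U_i$, $U_{i'}$ are Stein balls of radius $2\delta$ (fixed, independent of $k$) centred at nearby points, the fibre $\widehat{\pi}^{-1}(t)$ is contained in both $\widehat{\pi}^{-1}(U_i)$ and $\widehat{\pi}^{-1}(U_{i'})$. Restricting $\sigma_{i'}$ to $\widehat{\pi}^{-1}(t)$ and applying the Ohsawa--Takegoshi $L^2$-extension theorem with the metric $h_{\epsilon_k}$ on $L$ — whose curvature satisfies the $k$-uniform positivity displayed above — produces a section $\sigma_i \in H^0(\widehat{\pi}^{-1}(U_i), K_{X_k}+L)$ whose $L^2$-norm with respect to $(h_{\epsilon_k},\omega_k)$ is bounded by a $k$-independent multiple of the $L^2$-norm of $\sigma_{i'}$, and whose pointwise value at $x$ coincides with that of $\sigma_{i'}$. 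Taking sup over $\sigma_{i'}$ yields one direction of \eqref{equationlemmagluing}; interchanging $i$ and $i'$ yields the other.

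The main obstacle is verifying that the Ohsawa--Takegoshi extension constant is genuinely independent of $k$. The $2^k$-degree isogeny $\varphi_k$ causes the intrinsic fibre geometry to stay the same but would scale any naive Kähler metric on $X_k$, so measuring $L^2$-norms with $\omega_{X_k}$ would contaminate the constants. The choice $\omega_k = \epsilon_k \omega_{X_k}+\widehat{\pi}^{*}\omega_T$ solves this: the base factor $\widehat{\pi}^{*}\omega_T$ is pulled back from the fixed torus $T$ and is therefore invariant under the isogeny, while the $\epsilon_k\omega_{X_k}$ term is calibrated exactly so that $i\Theta_{h_{\epsilon_k}}(L)$ dominates $-2C_1 \widehat{\pi}^{*}\omega_T$ with $C_1$ independent of $k$. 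With this choice, Demailly's argument goes through with the covering and cut-off data $\{U_i,U_i'',U_i',\theta_i\}$ unchanged, yielding a constant $C_2$ depending only on $T,\omega_T,\delta,h_0$ and the fixed cover $\mathcal{U}$.
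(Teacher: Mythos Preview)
Your overall strategy is correct and shares the paper's key insight: interpret $\sum_j\|e_{i,j}\|_{h_0}^2(z)$ via the extremal property of the Bergman kernel, then transfer a near-extremal section between $\widehat\pi^{-1}(U_{i'})$ and $\widehat\pi^{-1}(U_i)$ with $L^2$-norm controlled by a $k$-independent constant, using the twisted metric $\omega_k=\epsilon_k\omega_{X_k}+\widehat\pi^*\omega_T$ so that all base-direction quantities are uniform in $k$.

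The technical route differs, and your sketch has a gap. The paper does \emph{not} restrict the extremal section to the fibre $\widehat\pi^{-1}(t)$ and then invoke Ohsawa--Takegoshi as a black box. Instead it multiplies $\widehat e_i$ by a base cut-off $\widehat\pi^*\theta$ supported in a small ball around $\pi_1(z)$, and solves the $\overline\partial$-equation directly on $\widehat\pi^{-1}(U_{i'})$ with the twisted weight
$$
\widetilde h_{\epsilon_k}=h_{\epsilon_k}\cdot e^{-(n+1)\widehat\pi^*\ln|t-\pi_1(z)|-\widehat\pi^*\psi_{i'}},
$$
the logarithmic pole forcing the correction $g_{i'}$ to vanish along the whole fibre and the bounded psh function $\psi_{i'}$ absorbing the negative base curvature $-2C_1\widehat\pi^*\omega_T$. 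Because $\theta$ lives on $T$, the estimate $\|\overline\partial\widehat\pi^*\theta\|_{\omega_k}\le C_4$ is automatic and $k$-independent. Every $L^2$-norm in the argument is computed on an open subset of $X_k$, never on a fibre.

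Your restrict-then-extend route would require two ingredients you do not supply: (i) a $k$-uniform sub-mean-value bound $\|\sigma_{i'}|_{\widehat\pi^{-1}(t)}\|\le C\|\sigma_{i'}\|_{\widehat\pi^{-1}(U_{i'})}$, which is delicate since $h_{\epsilon_k}$ is singular and the fibre volume with respect to $\omega_k$ scales with $\epsilon_k$; and (ii) a version of Ohsawa--Takegoshi valid under the curvature hypothesis $i\Theta_{h_{\epsilon_k}}(L)\ge -2C_1\widehat\pi^*\omega_T$ with a constant depending only on the base geometry. Both are plausible but neither is the standard formulation; the paper's explicit H\"ormander argument sidesteps them entirely. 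A minor point: $z$ lies in $\PP(E_{m,k})$, not in $X_k$, so your identification with evaluation at some $x\in\widehat\pi^{-1}(t)$ is not quite right --- the extremal characterisation you use is nonetheless valid.
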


\begin{proof}

Recall that $U_i^{'}\Subset U_i{''}\Subset U_{i}$ are the balls of radius $\delta$, $\frac{3}{2}\delta$, $2\delta$ respectively
as constructed in \cite[13.B]{Dem12}.
Let $z$ be a fixed point in $\pi_{1}^{-1}(U_{i}''\cap U_{i'}'')$.
Since $e_{i,j }$ is a section of a line bundle, we have
$$\sum\limits_{j} \|e_{i, j}\|_{h_{0}}^2 (z)= 
\sup\limits_{\sum\limits_{j} \mid a_j \mid^2 =1} \|\sum\limits_{j} a_{j} e_{i, j}\|_{h_{0}}^{2}(z) .$$
Therefore, there exists a $\widehat{e}_{i}\in H^{0}(\widehat{\pi}^{-1}(U_{i}), K_{X_k}+L)$
such that
\begin{equation}\label{estimateonepiece}
\int_{\widehat{\pi}^{-1}(U_{i})} \|\widehat{e}_{i}\|_{h_{\epsilon_k}}^{2}=1\qquad \text{and}\qquad
\|e_{i}\|_{h_{0}}^{2} (z) = \sum\limits_{j} \|e_{i, j}\|_{h_{0}}^2 (z) ,
\end{equation}
where $e_{i}\in H^{0}(\pi_{1}^{-1}(U_i), \O_{\PP(E_{m,k})} (1))$ is induced by $\widehat{e}_{i}$.
Our goal is to construct an element in $H^{0}(\widehat{\pi}^{-1}(U_{i'}), K_{X_k}+L)$ with controlled norm
and equals to $\widehat{e}_{i}$ on $\widehat{\pi}^{-1}(\pi_{1} (z))$.

Let $\theta$ be a cut-off function with support in the ball of radius $ \frac{\delta}{4}$ 
centered at $\pi_{1} ( z )$ (thus is supported in $U_{i}\cap U_{i'}$), 
and equal to $1$ on the ball of radius $\frac{\delta}{8}$ centered at $\pi_{1} ( z )$.
By construction, $(\widehat{\pi}^{*}(\theta)\cdot\widehat{e}_{i})$ is supported in $\widehat{\pi}^{-1} (U_{i}\cap U_{i'})$, 
thus it is well defined on $\widehat{\pi}^{-1}(U_{i'})$.
Therefore we can solve the $\overline{\partial}$-equation for $\overline{\partial} (\widehat{\pi}^{*}(\theta)\cdot\widehat{e}_{i})$ on 
$\widehat{\pi}^{-1}(U_{i'})$
with respect to the metric 
\begin{equation}\label{newmetric}
\omega_{X_k,\epsilon_k}=\epsilon_k\cdot\omega_{X_k}+\widehat{\pi}^{*}\omega_T 
\end{equation}
by choosing a good metric on $L$. 
Before giving the good metric on $L$, we first give some estimates.

Since $\theta$ is defined on $T$, we have
$$\|\overline{\partial}\widehat{\pi}^{*}(\theta)\|_{\omega_{X_k,\epsilon_k}}\leq C_4$$
for some constant $C_{4}$ independent of $k, \epsilon_k$
\footnote{$C_4$ depends on $\delta$. But by construction, the radius $\delta$ is independent of $k$.}.
Therefore we have
\begin{equation}\label{equation5}
\int_{\widehat{\pi}^{-1}(U_{i'})}\|\overline{\partial}(\widehat{\pi}^{*}(\theta)\cdot\widehat{e}_{i})\|_{h_{\epsilon_k}, \omega_{X_k,\epsilon_k}}^{2}
 =\int_{\widehat{\pi}^{-1}(U_{i})}\|\overline{\partial}(\widehat{\pi}^{*}(\theta)\cdot\widehat{e}_{i})\|_{h_{\epsilon_k}, \omega_{X_k,\epsilon_k}}^{2}
\end{equation}
$$
\leq C_4 \int_{\widehat{\pi}^{-1}(U_{i})}\|\widehat{e}_{i}\|_{h_{\epsilon_k}}^{2}=C_{4} ,
$$
where the first equality comes from the fact that  $(\widehat{\pi}^{*}(\theta)\cdot\widehat{e}_{i})$ is supported in 
$\widehat{\pi}^{-1} (U_{i}\cap U_{i'})$.
By \eqref{firstsingularmetric} and \eqref{newmetric}, we have
\begin{equation}\label{equation6}
i\Theta_{h_{\epsilon_k}}(L)\geq \epsilon_k\omega_{X_k}-2\cdot C_1 \widehat{\pi}^{*}(\omega_{T} )\geq 
\omega_{X_k,\epsilon_k}-(2\cdot C_1 +1) \widehat{\pi}^{*}(\omega_{T} ). 
\end{equation}

We now define a metric $\widetilde{h}_{\epsilon_k} = h_{\epsilon_k}\cdot e^{-(n+1) \widehat{\pi}^{*}(\ln |t-\pi_{1} (z)|)-\widehat{\pi}^{*}\psi_{i'}(t)}$ 
on $ L $ over $\widehat{\pi}^{-1}(U_{i'})$,
where $\psi_{i'}(t)$ is a uniformly bounded function on $U_{i'}$ satisfying 
$$dd^c \psi_{i'}(t)\geq (2 C_1 +1) \omega_{T} .$$
Then \eqref{equation6} implies that 
$$i\Theta_{\widetilde{h}_{\epsilon_k}}(L)\geq \omega_{X_k,\epsilon_k}  \qquad \text{on }\widehat{\pi}^{-1}(U_{i'}).$$
By solving the $\overline{\partial}$-equation for $\overline{\partial} (\widehat{\pi}^{*}(\theta)\cdot\widehat{e}_{i})$ 
with respect to $(\widetilde{h}_{\epsilon_k} , \omega_{X_k,\epsilon_k} )$ on $\widehat{\pi}^{-1}(U_{i'})$, 
we can find a $g_{i'}\in L^{2}(\widehat{\pi}^{-1}(U_{i'}), K_{X_k}+L)$ such that
\begin{equation}\label{solutionpartial}
\overline{\partial}g_{i'}=\overline{\partial}(\widehat{\pi}^{*}(\theta)\widehat{e}_{i})
\end{equation}
and
\begin{equation}\label{equation7}
\int_{\widehat{\pi}^{-1}(U_{i'})}\|g_{i'}\|_{\widetilde{h}_{\epsilon_k}}^{2}\leq 
C_5\int_{\widehat{\pi}^{-1}(U_{i'})} \|\overline{\partial}(\widehat{\pi}^{*}(\theta)
\cdot\widehat{e}_{i})\|_{\widetilde{h}_{\epsilon_k}, \omega_{X,\epsilon_k}}^{2}
\leq C_{6},
\end{equation}
where the second inequality comes from \eqref{equation5} 
and the fact that
$$
\overline{\partial}(\widehat{\pi}^{*}(\theta)\cdot\widehat{e}_{i}) (z)=0 \qquad \text{for }z\in \widehat{\pi}^{-1}(B_{\frac{\delta}{8}}(\pi_{1}(z))),
$$
where $B_{\frac{\delta}{8}}(\pi_{1}(z))$ is the ball of radius $\frac{\delta}{8}$ centered at $\pi_{1}(z)$.
By \eqref{solutionpartial}, we obtain a holomorphic section 
$$\widehat{e}_{i'} := (\widehat{\pi}^{*}(\theta)\cdot \widehat{e}_{i}- g_{i'} )\in H^{0}(\widehat{\pi}^{-1}(U_{i'}), K_{X_k}+L) .$$
By the definition of the metric $\widetilde{h}_{\epsilon_k}$ and \eqref{equation7}, 
$g_{i'}=0$ on $\widehat{\pi}^{-1}(\pi_{1}(z))$.
Therefore 
$\widehat{e}_{i'} = \widehat{e}_{i} $ on $\widehat{\pi}^{-1}(\pi_{1} (z))$.
Moreover, \eqref{estimateonepiece} and \eqref{equation7} imply that
$$\int_{\widehat{\pi}^{-1}(U_{i'})} \|\widehat{e}_{i'}\|_{h_{\epsilon_k}}^{2}=\int_{\widehat{\pi}^{-1}(U_{i'})}\|\widehat{\pi}^{*}(\theta)\cdot\widehat{e}_{i}- g_{i'}\|_{h_{\epsilon_k}}^{2}\leq C$$
for a constant $C$ independent of $k$.
By the extremal property of Bergman kernel, \eqref{equationlemmagluing} is proved.
\end{proof}

\begin{lemma} \label{lemmanumericallyflat}
In the situation of Lemma \ref{lemmaflat}, the vector bundle $E_{m}$ is numerically flat.
\end{lemma}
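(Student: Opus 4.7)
\emph{Plan.} Since $E_m$ is nef by Lemma \ref{lemmanef}, by \cite[Thm.\,1.18]{DPS94} it suffices to show $c_1(E_m)=0$ in $H^2(T,\R)$. On the torus $T$ any nef real $(1,1)$-class is representable by a translation-invariant semipositive form, so this vanishing is equivalent to $c_1(E_m)\cdot\omega_T^{r-1}=0$ for a single K\"ahler class $\omega_T$. The natural strategy is to reduce to a smooth curve $C\subset T$ and to prove that $E_m|_C$ is numerically flat, which gives $c_1(E_m)\cdot[C]=0$.

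\emph{Main step on a curve.} Take the Harder-Narasimhan filtration $0\subset V_1\subset\cdots\subset V_k=E_m|_C$. Nefness of $E_m|_C$ forces every slope to be $\geq 0$. Assume for contradiction that $\mu(V_1)>0$; then $V_1$ is semistable of positive slope on a curve, hence ample. Set $X'_C:=X'\times_T C\subset\PP(E_m|_C)$; this is a subvariety of dimension $n-r+1$ with $\sO_{\PP(E_m|_C)}(1)|_{X'_C}=-mK_{X'}|_{X'_C}$. By Proposition \ref{propositionnumericaldimension} we have $(-K_X)^{n-r+1}=0$, and since $\mu:X\to X'$ is birational and crepant and $X'$ has rational singularities, this descends to $(-K_{X'})^{n-r+1}=0$ on $X'$. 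Restricting to $X'_C$ yields
\[
X'_C\cdot\sO_{\PP(E_m|_C)}(1)^{\dim X'_C}=\bigl((-mK_{X'})^{n-r+1}\bigr)|_{X'_C}=0.
\]
By Lemma \ref{lemmaelementary}, this forces $X'_C\subset\PP(E_m|_C/V_1)$.

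\emph{Fibrewise obstruction.} Fibrewise, $X'_t\subset\PP(E_{m,t}/V_{1,t})$ for every $t\in C$. For $m\gg 0$ the embedding $X'_t\hookrightarrow\PP(E_{m,t})$ is given by the complete linear system $|-mK_{X'_t}|$, with $E_{m,t}=H^0(X'_t,-mK_{X'_t})$ by flat base change (Lemma \ref{lemmaflat}), so the image of $X'_t$ is linearly non-degenerate in $\PP(E_{m,t})$: no nonzero section of $-mK_{X'_t}$ can vanish identically on $X'_t$. Hence $V_{1,t}=0$ for every $t$, so $V_1=0$, contradicting $\mu(V_1)>0$. Therefore $\mu(V_1)=0$; the strict decrease of HN-slopes together with their nonnegativity forces the filtration to collapse, so $E_m|_C$ is semistable of slope zero and hence numerically flat on $C$, giving $c_1(E_m)\cdot[C]=0$.

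\emph{Main obstacle.} The reduction to a curve is straightforward in the projective case, where one takes $C$ to be a general complete intersection of ample divisors so that $[C]$ is proportional to $H^{r-1}$ for an ample polarisation $H$. The real work lies in the non-projective K\"ahler case, where $T$ admits no ample divisor and no such curve exists. The tools developed in Subsection 3.A, in particular Lemma \ref{lemmafiltration} and Proposition \ref{propositionfiltration}, are tailored to replace the curve-restriction step by a global filtration argument on $T$, using the projection of $T$ onto an elliptic curve quotient and the projective-flatness of the graded pieces of the stable filtration to run the analogue of the above contradiction.
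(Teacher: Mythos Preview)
Your curve argument is correct and coincides with the paper's reasoning in the special case where the base reduces to a curve (equivalently, where $\det E_m$ is ample so that $T$ is already an abelian variety): restrict, extract an ample subbundle from the Harder--Narasimhan filtration, apply Lemma \ref{lemmaelementary}, and contradict linear nondegeneracy of the anticanonical embedding. The paper says exactly this (``if $\pi_1$ is an isomorphism the statement follows from Lemma \ref{lemmaelementary}'').

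The non-projective case, which you correctly flag as the main obstacle, is however the real content of the lemma, and your sketch of it is not accurate on two points.

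First, there is no ``projection of $T$ onto an elliptic curve quotient'' in general: a simple non-algebraic torus admits neither curves nor nontrivial quotient tori. The fibration arises instead from the contradiction hypothesis $c_1(E_m)\neq 0$ via \cite[Prop.\,2.2]{Cao12}: a nonzero nef rational $(1,1)$-class on a torus forces a smooth fibration $\pi_1\colon T\to S$ onto an \emph{abelian variety} $S$ of some dimension $s\geq 1$, with $c_1(E_m)=c\cdot\pi_1^*c_1(A)$ for a very ample $A$ on $S$. One then cuts $S$ down to a smooth curve $S_1$ by $s-1$ general members of $|A|$ and works on $T_1:=\pi_1^{-1}(S_1)$, which is a compact K\"ahler manifold smoothly fibred over the curve $S_1$, not itself a curve.

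Second, on $T_1$ the first filtration piece $F_1\subset E_m':=E_m|_{T_1}$ furnished by Proposition \ref{propositionfiltration} is \emph{not} ample; one only knows $c_1(F_1)=(\pi_1|_{T_1})^*\omega_{S_1}$ and that $F_1$ is projectively flat with a metric of curvature $\pi_1^*\omega_{S_1}\cdot\Id_{F_1}$. So Lemma \ref{lemmaelementary} does not apply as stated. The paper repeats the blow-up of that lemma (blow up $\PP(E_m')$ along $\PP(E_m'/F_1)$, obtaining $\mu^*\sO_{\PP(E_m')}(1)\simeq h^*\sO_{\PP(F_1)}(1)+E$), but replaces the ampleness input by the curvature estimate $i\Theta(\sO_{\PP(F_1)}(1))\geq g^*\pi_1^*\omega_{S_1}$ coming from projective flatness. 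This still gives $h^*\sO_{\PP(F_1)}(1)\cdot X_1'\geq C\cdot X_{1,s}'$ for a general fibre over $S_1$, and since $\sO_{\PP(E_m')}(1)$ is relatively ample over $T_1$ one deduces
\[
(\sO_{\PP(E_m')}(1))^{n-r+1}\cdot X_1 \;\geq\; (\mu^*\sO_{\PP(E_m')}(1))^{n-r}\cdot h^*\sO_{\PP(F_1)}(1)\cdot X_1' \;>\; 0,
\]
contradicting Proposition \ref{propositionnumericaldimension}. This metric input, and the passage through the abelian-variety quotient rather than a curve in $T$, are the substantive ingredients your outline is missing.
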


\begin{proof}
By Lemma \ref{lemmanef} it is sufficient to prove that $c_{1}(E_{m})=0$.
Arguing by contradiction we suppose that $c_{1}(E_{m})\neq 0$.
Then \cite[Prop.2.2]{Cao12} implies that there exists a smooth fibration 
$\pi_{1}: T\rightarrow S$ onto an abelian variety $S$ of dimension $s$ such that 
$$
c_{1}(E_{m})=c\cdot\pi^{*}_{1} c_1(A)
$$
for some very ample line bundle $A$ and $c> 0$.

Let $S_{1}$ be a complete intersection of $s-1$ hypersurfaces defined by $s-1$ general elements in $H^{0}(S, A)$.
We have thus a morphism
$$\begin{CD}
   X_{1} @>\pi|_{X_1}>> T_{1} @>{\pi_1}|_{T_1}>> S_{1}
  \end{CD}
$$
where $X_{1} :=\pi^{-1}\pi_{1}^{-1}(S_{1})$ and $T_{1} :=\pi_{1}^{-1}(S_{1})$
are smooth by Bertini's theorem.
For simplicity of notation we set $E_{m}':=E_{m}|_{T_{1}}$. 
Then $E_{m}'$ is nef and $c_{1}(E_{m}')=c\cdot (\pi_{1}|_{T_1})^* c_1(A)$. 
Applying Proposition \ref{propositionfiltration}, 
we obtain a semipositive projective flat vector bundle $0\subset F_{1}\subset E_{m}'$ on $T_{1}$
such that 
and $c_{1}(F_{1})=\pi_{1}^{*} (\omega_{S_{1}})$
for some K\"ahler form $\omega_{S_{1}}$ on $S_{1}$.

Our goal is to show that 
$$
(\sO_{\PP (E_{m}')}(1))^{n-r+1} \cdot X_{1} > 0.
$$
Since $\sO_{\PP (E_{m}')}(1)|_{X_1} \equiv (-m K_X)|_{X_1}$ this gives a contradiction 
to Proposition \ref{propositionnumericaldimension}. 
Note first that $X_1$ is not contained in any projective subbundle of $\PP (E_{m}')$ since
the general $\pi$-fibre $F$ is embedded by the complete linear system $|-m K_F|$, so it is linearly non-degenerate.
Thus if $\pi_1$ is an isomorphism (which is equivalent to $\det E_m$ being ample) 
the statement follows from Lemma \ref{lemmaelementary}.
In the general case we follow a similar construction: let 
$\mu: Y\rightarrow \mathbb{P}(E_{m}')$ be the blow-up along the subvariety 
$\mathbb{P}(E_{m}'/F_{1})$.
Since $X_{1}$ is not contained in $\mathbb{P}(E_{m}'/F_{1})$, we have a diagram
$$
\xymatrix{
X_{1}'\ar[d]\ar[r]^{i}& Y \ar[d]^{\mu}\ar[r]^{h} & \PP(F_{1})\ar[ldd]^{g}\\
X_{1}\ar[r]    \ar[rd]_{\pi|_{X_1}}   & \PP(E_{m}')\ar[d]^{f} & \\
              & T_{1}\ar[d]^{\pi_{1}}\\
              & S_{1}
}
$$
where $X_{1}'$ is the strict transformation of $X_{1}$ and $f,g$ and $h$ are the natural maps as in the proof 
of Lemma \ref{lemmaelementary}.
By the same argument as in \eqref{decompoone} of Lemma \ref{lemmaelementary}, we have
$$
\mu^* \sO_{\PP (E_{m}')}(1) \simeq h^* \sO_{\PP(F_{1})}(1) + E,
$$
where $E$ is the $\mu$-exceptional divisor.
Since $\sO_{\PP (E_{m}')}(1)$ is nef,
we have
\begin{equation}\label{equationaddd}
(\mu^* \sO_{\PP (E_{m}')}(1))^{n-r+1} \cdot X_{1}'\geq
(\mu^* \sO_{\PP (E_{m}')}(1))^{n-r}\cdot h^* \sO_{\PP(F_{1})}(1)\cdot X_{1}' .
\end{equation}
By Proposition \ref{propositionfiltration}, there is a smooth metric $h_0$ on $F_{1}$
such that 
$$i \Theta_{h_0} (F_{1}) =\pi_{1}^*\omega_{S_1}\cdot \Id_{F_{1}} .$$
On $\PP(F_{1})$ we have
$$
i \Theta_{h_0} (g^*F_{1}) = g^*\pi_{1}^*\omega_{S1}\cdot \Id_{g^*F_{1}}.
$$
The metric $h_0$ induces a natural metric $h_0 ^{'}$ on $\sO_{\PP(F_{1})}(1)$,
and by \cite[Prop. 1.11]{DPS94},
we obtain
$$
i \Theta_{h_0 ^{'}}(\sO_{\PP(F_{1})}(1))\geq g^*\pi_{1}^*\omega_{S_1}.
$$ 
Since $h\circ g=\mu\circ f$, we get
$h^*i \Theta_{h_0 ^{'}} (\sO_{\PP(F_{1})}(1))\geq \mu^{*} f^{*}\omega_{S_{1}}$.
Combining this with the fact that $f\circ\mu\circ i (X_{1}')=T_1$ by construction, 
we obtain 
$$h^* \sO_{\PP(F_{1})}(1)\cdot X_{1}'\geq C\cdot X_{1, s}',$$
where $X_{1,s}'$ is the general fiber of $i\circ\mu\circ f\circ \pi_{1}$, and $C> 0$.
Combining this with the fact that $\sO_{\PP (E_{m}')}(1)$ is $f$-ample,
we get
$$(\mu^* \sO_{\PP (E_{m}')}(1))^{n-r}\cdot h^* \sO_{\PP(F_{1})}(1)\cdot X_{1}' \neq 0 .$$
We conclude by \eqref{equationaddd}.
\end{proof}

\subsection{The projective case}

In Section \ref{sectiondimensiontwo} we will need the following
versions of Lemma \ref{lemmanef} and Lemma \ref{lemmanumericallyflat}.

\begin{lemma} \label{lemmakltdirectimage}
Let $X$ be a normal projective variety and $\Delta$ a boundary divisor on $X$ such that
the pair $(X, \Delta)$ is klt. Let \holom{\varphi}{X}{T} be a flat fibration onto an abelian
variety. Let $L$ be a Cartier divisor on $X$ such that $L-(K_X+\Delta)$ 
is nef and $\varphi$-big. Then
the following holds:
\begin{enumerate}[(i)]
\item Let $H$ be an ample line bundle on $T$, then $\varphi_* (\sO_X(L)) \otimes H$ is ample.
\item The direct image sheaf $\varphi_* (\sO_X(L))$ is nef.
\end{enumerate} 
\end{lemma}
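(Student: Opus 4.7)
The plan is to carry out three stages: establish local freeness of $E:=\varphi_*\sO_X(L)$, prove the nefness statement (ii), and then deduce the ampleness statement (i) from (ii) by a general projective-bundle computation.

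First I would show that $E$ is locally free. Since $(X,\Delta)$ is klt and $L-(K_X+\Delta)$ is nef and $\varphi$-big, the relative Kawamata--Viehweg vanishing theorem gives $R^i\varphi_*\sO_X(L)=0$ for $i>0$. Combined with the flatness of $\varphi$, this forces $E$ to be a locally free coherent sheaf commuting with arbitrary base change.

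For (ii), I would mimic the isogeny argument of Lemma \ref{lemmanef}. For the multiplication-by-$n$ map $[n]:T\rightarrow T$, set $X_n:=X\times_T T$ with second projection $\varphi_n:X_n\rightarrow T$; flat base change identifies $[n]^*E=\varphi_{n,*}\sO_{X_n}(L_n)$, and the étale cover $X_n\rightarrow X$ preserves the hypotheses so that $(X_n,\Delta_n)$ remains klt with $L_n-(K_{X_n}+\Delta_n)$ nef and $\varphi_n$-big. Following the Stein-cover construction of Lemma \ref{lemmanef}---using that the klt hypothesis gives $\sI(\Delta)=\sO_X$, so the $\overline{\partial}$-estimates and Bergman-kernel gluing of Lemma \ref{gluing} apply without loss---one constructs a smooth metric on $\sO_{\PP([n]^*E)}(1)$ with curvature bounded below by $-C\pi_n^*\omega_T$ for a constant $C$ independent of $n$. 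Pulling back through $q:\PP([n]^*E)\rightarrow\PP(E)$ and using $q^*\pi^*H\equiv n^2\pi_n^*H$, one obtains a metric on $\sO_{\PP(E)}(1)$ with curvature at least $-\frac{C}{n^2}\pi^*\omega_T$; letting $n\rightarrow\infty$ yields a metric of non-negative curvature, proving $E$ nef.

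For (i), given (ii), the twist $E\otimes H$ is ample by the general principle that a nef vector bundle tensored with an ample line bundle is ample. This reduces to a Nakai--Moishezon check on $\PP(E\otimes H)=\PP(E)$ for $\sO_{\PP(E\otimes H)}(1)=\sO_{\PP(E)}(1)+\pi^*H$: for any subvariety $V\subset\PP(E)$ of dimension $d$ with $e:=\dim\pi(V)$, the expansion
$$
(\sO_{\PP(E)}(1)+\pi^*H)^d\cdot V=\sum_{k=0}^d\binom{d}{k}\sO_{\PP(E)}(1)^{d-k}\cdot(\pi^*H)^k\cdot V
$$
has all terms non-negative (both classes are nef), and the $k=e$ term equals $(\deg V_t)\cdot(H^e\cdot\pi(V))>0$, where $V_t$ is the generic fibre of $V\rightarrow\pi(V)$ inside $\PP(E_t)$; subvarieties contained in a single fibre satisfy the criterion directly from the ampleness of $\sO_{\PP(E_t)}(1)$. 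The main obstacle is the nefness step: adapting the analytic machinery of Lemma \ref{lemmanef} to a klt pair demands care with the multiplier ideals and $L^2$-extension, though the klt hypothesis $\sI(\Delta)=\sO_X$ ensures these transfer as in the smooth case.
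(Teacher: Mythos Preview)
Your argument reverses the logical order of the paper and imports much heavier machinery than is needed. The paper proves (i) first and then deduces (ii) from (i); you do the opposite.

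For (i), the paper gives a short algebraic argument: since $L-(K_X+\Delta)$ is nef and $\varphi$-big, the divisor $L+\varphi^*H-(K_X+\Delta)$ is nef and big, so Kawamata--Viehweg vanishing kills $H^j(X, L\otimes\varphi^*(H\otimes P))$ for all $j>0$ and all $P\in\pic^0(T)$. By the Leray spectral sequence and the relative vanishing, this says $\varphi_*(\sO_X(L))\otimes H$ satisfies Mukai's property $IT_0$, and Debarre's criterion \cite[Cor.~3.2]{Deb06} then gives ampleness directly. For (ii), the paper uses the multiplication-by-two isogeny $\mu$ with $\mu^*H\simeq H^{\otimes 4}$: by flat base change, $\mu_d^*(\varphi_*\sO_X(L))\langle\frac{1}{4^d}H\rangle$ is identified with $\tilde\varphi_*\sO_{X_d}(\tilde\mu_d^*L)\otimes H$, which is ample by (i); letting $d\to\infty$ gives nefness. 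No metrics, no $\overline\partial$-equations, no gluing estimates.

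Your route for (ii) via the analytic construction of Lemma~\ref{lemmanef} is plausible in spirit but has a real technical obstacle you underplay: here $X$ is only a normal projective variety, not a compact K\"ahler manifold, so the Bergman-kernel and H\"ormander estimates of Lemma~\ref{gluing} do not apply directly. You would need to pass to a log resolution, track the boundary carefully, and check that the multiplier-ideal condition survives; this is not automatic from ``$\sI(\Delta)=\sO_X$'' alone. Your deduction of (i) from (ii) via ``nef tensor ample is ample'' is correct and standard, but note that the paper's organisation makes this step unnecessary: once (i) is available via $IT_0$, the isogeny trick gives (ii) in two lines.
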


\begin{proof}
Note first that since $L-(K_X+\Delta)$ is relatively nef and big, 
the higher direct images $R^j \varphi_* (\sO_X(L))$
vanish for all $j>0$ by the relative Kawamata-Viehweg theorem. By cohomology and base change the sheaf
$\varphi_* (\sO_X(L))$ is locally free.

{\em Proof of  (i).} Since $H$ is ample and $L-(K_X+\Delta)$ is nef and $\varphi$-big, 
the divisor $L+\varphi^*H-(K_X+\Delta)$ is nef and big.
So if $P \in \mbox{Pic}^0(T)$ is a numerically trivial line bundle, then
by Kawamata-Viehweg vanishing one has
\[
H^j(T, \varphi_* (\sO_X(L)) \otimes H \otimes P) 
=
H^j(X, L \otimes \varphi^*(H \otimes P)) = 0 
\qquad
\forall \ j > 0. 
\]
Thus $ \varphi_* (\sO_X(L)) \otimes H$ satisfies Mukai's property $IT_0$ \cite{Muk81}. 
In particular it is ample by \cite[Cor.3.2]{Deb06}.

{\em Proof of  (ii).} Let \holom{\mu}{T}{T} be the multiplication map $x \mapsto 2x$.
Let $H$ be a symmetric ample divisor, then by the theorem of the square
\cite[Cor.2.3.6]{BL04} we have $\mu^* H \simeq H^{\otimes 4}$. 
It is sufficient to show that for $d \in \N$ arbitrary, the $\Q$-twisted \cite[Sect.6.2]{Laz04b} vector bundle
$\varphi_* (\sO_X(L))\hspace{-0.8ex}<\hspace{-0.8ex}\frac{1}{4^d}H\hspace{-0.8ex}>$
is nef. Denote by $\mu_d$ the $d$-th iteration of $\mu$ and consider the base change diagram
\[
\xymatrix{
X_d
\ar[d]_{\tilde \varphi} \ar[r]^{\tilde \mu_d} & X \ar[d]_\varphi
\\
T \ar[r]^{\mu_d} & T
}
\]
where $X_d := X \times_T T$.
By flat base change \cite[Prop.9.3]{Har77} we have
\[
\mu_d^*(\varphi_* (\sO_X(L))\hspace{-0.8ex}<\hspace{-0.8ex}\frac{1}{4^d}H\hspace{-0.8ex}>)
\sim_\Q
\tilde \varphi_* (\sO_{X_d}(\tilde \mu_d^*L))\hspace{-0.8ex}<\hspace{-0.8ex}\frac{1}{4^d}\mu_d^* H\hspace{-0.8ex}>
\]
Since $\frac{1}{4^d} \mu_d^* H \sim_\Q H$, we see that
$\mu_d^*(\varphi_* (\sO_X(L))\hspace{-0.8ex}<\hspace{-0.8ex}\frac{1}{4^d}H\hspace{-0.8ex}>)
\sim_\Q
\tilde \varphi_* (\sO_{X_d}(\tilde \mu_d^*L)) \otimes H
$
which by the first statement is ample. 
\end{proof}

\begin{proposition} \label{propositionkltdirectimage}
Let $X$ be a normal projective variety and $\Delta$ a boundary divisor on $X$ such that
the pair $(X, \Delta)$ is klt. Let \holom{\varphi}{X}{T} be a flat fibration onto a
smooth curve or an abelian
variety. Suppose that $-(K_{X/T}+\Delta)$ is nef and $\varphi$-ample.
Then 
$$
E_{m}:=\varphi_{*}(\sO_X(-m (K_{X/T}+\Delta)))
$$ 
is a numerically flat vector bundle
for all sufficiently large and divisible $m \gg 0$.
\end{proposition}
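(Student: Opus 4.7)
The plan is to adapt the proofs of Lemma \ref{lemmaflat}, Lemma \ref{lemmanef}, and Lemma \ref{lemmanumericallyflat} to the projective klt setting, using Lemma \ref{lemmakltdirectimage} for positivity and Lemma \ref{lemmanumericaldimensionprojective} as the numerical dimension input. We need to establish: (i) $E_m$ is locally free; (ii) $E_m$ is nef; (iii) $c_1(E_m)=0$.

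Step (i) is parallel to Lemma \ref{lemmaflat}: for $m$ sufficiently large and divisible, $-m(K_{X/T}+\Delta)$ is a Cartier, $\varphi$-very ample divisor. Relative Kawamata--Viehweg vanishing for klt pairs yields $R^{j}\varphi_{*}\sO_{X}(-m(K_{X/T}+\Delta))=0$ for $j>0$, and the flatness of $\varphi$ together with cohomology and base change gives local freeness. For step (ii), when $T$ is an abelian variety we have $K_{T}\equiv 0$, so with $L:=-m(K_{X/T}+\Delta)=-m(K_{X}+\Delta)$ the class $L-(K_{X}+\Delta)=-(m+1)(K_{X}+\Delta)$ is nef and $\varphi$-big, and Lemma \ref{lemmakltdirectimage}(ii) applies directly. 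When $T$ is a smooth curve, the same $IT_{0}$-type Kawamata--Viehweg argument (after twisting by $\varphi^{*}A$ for $A$ ample on $T$), together with a Fujino--Viehweg semi-positivity statement for log direct images, yields nefness.

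For step (iii) we follow the strategy of Lemma \ref{lemmanumericallyflat}, handling the curve case first. Assume $\deg E_{m}>0$ for contradiction. The first Harder--Narasimhan piece $F_{1}\subset E_{m}$ is semistable, nef, and of positive slope, so every quotient of $F_{1}$ has positive degree; hence $F_{1}$ is an ample subbundle of the nef $E_{m}$. The relatively very ample $-m(K_{X/T}+\Delta)$ induces an embedding $X\hookrightarrow\PP(E_{m})$ under which every fibre $X_{t}$ is linearly non-degenerate in $\PP((E_{m})_{t}^{*})$, so $X\not\subset\PP(E_{m}/F_{1})$. Lemma \ref{lemmaelementary} then gives
$$
\sO_{\PP(E_{m})}(1)^{\dim X}\cdot[X]>0.
$$
Since $\sO_{\PP(E_{m})}(1)|_{X}=-m(K_{X/T}+\Delta)$, this intersection equals $(-m(K_{X/T}+\Delta))^{n}$, which vanishes by Lemma \ref{lemmanumericaldimensionprojective} --- contradiction. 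For $T$ an abelian variety of dimension $r$, pick a general complete intersection curve $C\subset T$ cut out by $r-1$ general members of a very ample linear system $|H|$. By Bertini $X_{C}:=\varphi^{-1}(C)$ is smooth, $(X_{C},\Delta|_{X_{C}})$ is klt, and $-(K_{X_{C}/C}+\Delta|_{X_{C}})=-(K_{X/T}+\Delta)|_{X_{C}}$ is nef and $\varphi|_{X_{C}}$-ample; flat base change identifies $E_{m}|_{C}$ with the corresponding relative direct image, which is numerically flat by the curve case. Therefore $c_{1}(E_{m})\cdot H^{r-1}=\deg(E_{m}|_{C})=0$, and combined with nefness of $E_{m}$ the Hovanskii--Teissier inequalities force $c_{1}(E_{m})=0$ in $H^{2}(T,\R)$, yielding numerical flatness.

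The main obstacle is expected to be the nefness in step (ii) when $T$ is a smooth curve, since the multiplication-map trick of Lemma \ref{lemmakltdirectimage}(ii) is unavailable over non-abelian bases; one either needs an independent semi-positivity theorem or a direct cohomological argument exploiting relative Kawamata--Viehweg vanishing. A secondary technical point is the careful Bertini-type verification that $(X_{C},\Delta|_{X_{C}})$ remains klt and $X_{C}$ remains smooth in the reduction step of (iii).
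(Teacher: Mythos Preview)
Your approach is essentially the same as the paper's: reduce the abelian-variety case to the curve case by restriction to a general complete intersection curve, and in the curve case use Lemma \ref{lemmaelementary} together with Lemma \ref{lemmanumericaldimensionprojective} to rule out an ample subbundle of $E_m$.

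Two remarks. First, your ``main obstacle'' is not one: for $T$ a smooth curve the paper obtains nefness of $E_m$ by a one-line citation to \cite{Kol86}. Writing $-m(K_{X/T}+\Delta)=K_{X/T}+\Delta+\big(-(m+1)(K_{X/T}+\Delta)\big)$ with the bracketed term nef and $\varphi$-big, this is the standard log semipositivity of direct images over a curve (Koll\'ar/Viehweg/Fujino), so no new idea is required. Second, your Bertini claim that $X_C$ is smooth is not correct in general, since $X$ is only assumed normal; what you actually need (and what the paper asserts) is that $(X_C,\Delta_C)$ is klt, which follows from the Bertini theorem for klt pairs. With these adjustments your argument goes through and matches the paper's proof; your explicit identification of the ample subbundle via the first Harder--Narasimhan piece is a useful elaboration of the paper's unexplained ``there exists an ample subbundle $A\subset E_m$'', and your Hovanskii--Teissier step plays the role of the paper's citation to \cite[3.8]{Deb01}.
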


\begin{proof} 
For sufficiently divisible $m \in \N$ the $\Q$-divisor $m (K_{X/T}+\Delta)$ is integral
and Cartier.

{\em 1st case. $T$ is a curve.} Then $E_m$ is nef \cite{Kol86},
and for $m \gg 0$ we have an inclusion
$X \hookrightarrow \PP(E_m)$.
We can now argue as in Lemma \ref{lemmanumericallyflat}: if $E_m$ is not numerically flat, there
exists an ample subbundle $A \subset E_m$. By 
Lemma \ref{lemmanumericaldimensionprojective} we have
$(K_{X/T}+\Delta)^{\dim X}=0$, so Lemma \ref{lemmaelementary} implies that $X \subset \PP(E_m/A)$.
However this is not possible since the embedding of the general fibre $X_t$ in $\PP((E_m)_t)$ is linearly nondegenerate.

{\em 2nd case. $T$ is an abelian variety.} By Lemma \ref{lemmakltdirectimage}
the sheaf $E_m$
is a nef vector bundle.
If $C$ is a general complete intersection curve in $T$,
denote by $X_C:=\fibre{\varphi}{C}$ its preimage. Then the pair
$(X_C, \Delta_C:=\Delta \cap X_C)$ is klt and the relative canonical divisor
$-(K_{X_C/C}+\Delta_C)$
is nef and relatively ample. By the first case $\varphi_{*}(E_m \otimes \sO_C)$
is numerically flat, so $\det E_m$ is numerically 
trivial \cite[3.8]{Deb01}.
\end{proof}

\section{Proof of Theorem \ref{theoremmain}}

Let $X'$ be a normal compact K\"ahler space that admits a flat fibration $\pi': X' \rightarrow T$ 
onto a compact K\"ahler manifold $T$. Let $L$ be a line bundle on $X$ that is $\pi'$-ample,
for all $m \in \N$ we set $E_m := (\pi')_* (\sO_{X'}(mL))$. We fix a $m \gg 0$ such that we have
an embedding $X' \hookrightarrow \mathbb{P}(E_{m})$, for simplicity's sake we denote by
$\holom{\pi'}{\PP(E_m)}{T}$ the natural map. Let $\sI_{X'} \subset \sO_{\PP(E_m)}$ be the ideal
sheaf of $X'$. Then we define for every $d \in \N$
$$
S_{m, d} := (\pi')_{*}(\sI_{X'}\otimes \mathcal{O}_{\mathbb{P}(E_{m})}(d)).
$$

\begin{proposition} \label{propositionloctriv}
In the situation above suppose that
$E_{m}$ and $S_{m, d}$ are numerically flat vector bundles
for all $m \gg 1$ and $d\gg 1$. Then the fibration $\pi'$ is locally trivial.
\end{proposition}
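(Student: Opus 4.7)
The plan is to show that the degree-$d$ equations of $X'_t \subset \PP((E_m)_t)$ are, up to a trivialization of $E_m$, independent of $t$ in any simply-connected open subset. Twisting the ideal sequence $0 \to \sI_{X'} \to \sO_{\PP(E_m)} \to \sO_{X'} \to 0$ by $\sO_{\PP(E_m)}(d)$ and applying $(\pi')_{*}$ (for $d \gg 1$, so that $R^{1}(\pi')_{*}(\sI_{X'} \otimes \sO_{\PP(E_m)}(d))=0$), I would obtain the short exact sequence on $T$
\begin{equation*}
0 \to S_{m,d} \to (\pi')_{*}\sO_{\PP(E_m)}(d) \to (\pi')_{*}\sO_{X'}(dL) \to 0,
\end{equation*}
whose middle term is a symmetric power of $E_m$ (hence numerically flat since $E_m$ is) and whose kernel $S_{m,d}$ is numerically flat by hypothesis.

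The first key input is \cite[Thm.1.18]{DPS94}: every numerically flat vector bundle on a compact K\"ahler manifold admits a filtration whose graded pieces are hermitian flat. The second is the fact from \cite[Sect.3]{Sim92} that numerically flat bundles, together with these filtrations, form an abelian category equivalent to a subcategory of local systems carrying a canonical flat connection; morphisms of numerically flat bundles are horizontal with respect to this connection. Applying this to the inclusion $S_{m,d} \hookrightarrow (\pi')_{*}\sO_{\PP(E_m)}(d)$, it becomes a horizontal morphism of local systems.

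Consequently, after pulling back to the universal cover $p \colon \widetilde T \to T$ and choosing a flat trivialization, the subsheaf $p^{*}S_{m,d} \subset p^{*}(\pi')_{*}\sO_{\PP(E_m)}(d)$ corresponds to a constant family of subspaces. Equivalently, on every simply-connected open $U \subset T$ there is a holomorphic identification $E_m|_{U} \simeq U \times (E_m)_{t_0}$ under which the degree-$d$ ideal of $X'_t \subset \PP((E_m)_t)$ is carried to the degree-$d$ ideal of $X'_{t_0}$. Running this for all $d \gg 1$ identifies $X'_t$ with $X'_{t_0}$ scheme-theoretically and yields an analytic trivialization $\fibre{(\pi')}{U} \simeq U \times X'_{t_0}$, proving local triviality.

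The main obstacle is making the parallelism in the third paragraph rigorous. The existence of a filtration by hermitian flat quotients on each individual numerically flat bundle is immediate from \cite[Thm.1.18]{DPS94}, but to upgrade this to a \emph{constant} (not merely filtered) trivialization of the inclusion $S_{m,d} \subset (\pi')_{*}\sO_{\PP(E_m)}(d)$ one must verify that morphisms of numerically flat bundles are strict with respect to the canonical filtrations and the induced flat structures — this is precisely the argument of \cite{Cao12} referenced in the strategy sketch of the introduction, which I would import to complete the proof.
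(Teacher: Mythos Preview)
Your proposal is correct and follows essentially the same approach as the paper: both reduce to showing that the inclusion $S_{m,d} \hookrightarrow S^d E_m$ of numerically flat bundles is parallel for the canonical flat connection, so that locally constant frames of $S_{m,d}$ map to locally constant sections of $S^d E_m$ and the defining equations of the fibres are independent of $t$. The paper invokes \cite[Lemma 4.1]{Cao12} for exactly the parallelism step you flag as the main obstacle, and works directly on a small Stein open $U \subset T$ rather than passing to the universal cover, but otherwise the arguments coincide.
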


\begin{proof}
We have a natural inclusion $i: S_{m, d}\hookrightarrow S^d E_m$.
Since numerically flat vector bundles are local systems 
(cf. \cite[Sect.3]{Sim92} for general case or \cite[Lemma 6.5]{Ver04} for numerically flat vector bundles),
$S_{m, d}$ and $S^d E_m$ are local systems on $T$.
Let $U$ be any small Stein open set in $T$, and let $e_{1},\cdots, e_{k}$ be a local constant coordinates of $S_{m, d}$
over $U$.
Note that $\Hom (S_{m, d}, S^d E_m)$ is also a local system on $T$, and 
$i\in H^{0}(T, \Hom (S_{m, d}, S^d E_m))$.
By \cite[Lemma 4.1]{Cao12}, $i$ is 
parallel\footnote{It is not true that for any local system, the global sections are parallel.
However, it is true for numerically flat bundles.}
with respect to the local system 
$\Hom (S_{m, d}, S^d E_m)$.
Therefore the images of $e_{1},\cdots, e_{k}$ in $S^d E_m$ are also locally constant,
i.e. the polynomials defining the fibers $X'_{t}$ for $t\in U$ are locally constant.
In particular the fibration $\pi'$ is locally trivial.
\end{proof}

\begin{proof}[Proof of Theorem \ref{theoremmain}]
{\em Step 1. The relative anticanonical fibration is locally trivial.}
We follow the argument of \cite[Thm.3.20]{DPS94}. Denote by $X'$ the relative anticanonical model of $X$ 
(cf. Section \ref{subsectionanticanonical}).
Fix $m \gg 0$ such that we have an inclusion $X'\hookrightarrow \mathbb{P}(E_{m})$
where $E_m$ is defined as in Lemma \ref{lemmaflat} and denote by $\pi'$ both the anticanonical fibration
$X' \rightarrow T$ and $\PP(E_m) \rightarrow T$. 
For every $d \in 
\N$ we have an exact sequence
$$
0 \rightarrow \sI_{X'}\otimes \mathcal{O}_{\mathbb{P}(E_{m})}(d) \rightarrow
\mathcal{O}_{\mathbb{P}(E_{m})}(d) \rightarrow  \omega_{X'}^{\otimes -md} \rightarrow 0.
$$
Since $\sO_{\PP(E_{m})}(1)$ is $\pi'$-ample we get for $d \gg 0$ an exact sequence
$$
0 \rightarrow (\pi')_{*}(\sI_{X'}\otimes \mathcal{O}_{\mathbb{P}(E_{m})}(d)) 
\rightarrow S^d E_m \rightarrow E_{md} \rightarrow 0.
$$
The vector bundles $S^d E_m$ and $E_{md}$ are numerically flat, so $(\pi')_{*}(\sI_{X'}\otimes \mathcal{O}_{\mathbb{P}(E_{m})}(d))$
is numerically flat. Now apply Proposition \ref{propositionloctriv}.

{\em Step 2. The Albanese map $\pi$ is locally trivial.} Let $F'$ be the general fibre of the relative anticanonical fibration, and
let $F_t$ be any smooth $\pi$-fibre.
Then $F_t$ is a weak Fano manifold, and $\holom{\mu|_{F_t}}{F_t}{F'}$ is a crepant birational morphism,
so $F_t$ is a terminal model of the Fano variety $F'$.
By \cite[Cor.1.1.5]{BCHM10} there are only finitely many terminal models of $F'$. 
Thus there are only finitely many possible isomorphism classes for $F_t$,
hence there exists a non-empty Zariski open subset $T_0 \subset T$ such that $\fibre{\pi}{T_0} \rightarrow T_0$ 
is locally trivial with fibre $F$. Fix now an ideal sheaf $\sI$ on $F'$ such that $F$ is isomorphic to the blow-up of $F'$
along $\sI$.

Let $t \in T$ be an arbitrary point, and let $t \in U \subset T$ be an analytic neighbourhood such that
$X'_U:=\fibre{(\pi')}{U} \simeq U \times F'$. Let \holom{\tilde \mu}{\tilde X_U}{X'_U} be the blow-up
of $X'_U$ along the ideal sheaf $\sI \otimes \sO_U$, then $\tilde X_U \simeq U \times F$.
In particular $\tilde X_U$ is smooth and the birational morphism $\tilde \mu$ is crepant.
Set $X_U :=\fibre{\pi}{U}$, then $X_U$ is also smooth and $\holom{\mu|_{X_U}}{X_U}{X'_U}$
is crepant. Thus $X_U$ and and $\tilde X_U$ are both minimal models over the base $X'_U$ (cf. \cite[Defn.3.48]{KM98}),
hence the induced birational morphism $\tau: X_U \dashrightarrow X'_U$ is an isomorphism in codimension one \cite[Thm.3.52]{KM98}.
Moreover by the universal property of the blow-up the restriction of $\tau$
to 
$$
(X_U \cap \fibre{\pi}{T_0}) \dashrightarrow (X'_U \cap \fibre{(\pi' \circ \mu)}{T_0})
$$ 
is an isomorphism.
Let $H$ be an effective $\pi$-ample divisor on $X_U$, and let $H':=\tau_* H$ be its strict transform.
Then $H'$ is $\pi' \circ \mu$-nef: indeed if $C$ is any (compact) curve in $\tilde X_U \simeq U \times F$, it
deforms to a curve $C'$ that is contained in $e \times F$ with $e \in (U \cap T_0)$. Yet the restriction 
of $H'$ to $e \times F$ is ample, since $\tau$ is an isomorphism in a neighbourhood of $e \times F$.
Thus we see that $H' \cdot C = H' \cdot C'>0$. Since $X_U$ is smooth we satisfy the conditions
of \cite[Lemma 6.39]{KM98}, so $\tau$ is an isomorphism.
This shows that $X_U \rightarrow U$ is locally trivial with fibre $F$, since $t \in T$ is arbitrary this concludes the proof.
\end{proof}

\begin{proof}[Proof of Corollary \ref{corollarymain}]
Suppose first that $q(X)=\dim X-1$. If $-K_X$ is $\pi$-big we see by Theorem \ref{theoremmain} that the Albanese map is a $\PP^1$-bundle. If $-K_X$ is not $\pi$-big, the general fibre is an elliptic curve. Note that the $C_{n, n-1}$-conjecture is also known
in the K\"ahler case \cite[Thm.2.2]{Uen87}, so we see that $\kappa(X) \geq 0$. Yet $-K_X$ is nef, so we see that $-K_X \equiv 0$.
We conclude by the Beauville-Bogomolov decomposition. 
\end{proof}

The proof of Theorem \ref{theoremmain} works also in the following relative situation which we will use
in Section \ref{sectiondimensiontwo}.

\begin{corollary} \label{corollaryMFS}
Let $X$ be a normal $\Q$-factorial projective variety with at most terminal singularities,
and let $\holom{\varphi}{X}{C}$ be a fibration onto a smooth curve 
such that $-K_{X/C}$ is nef and $\varphi$-big. If the general fibre is smooth,
then $\varphi$ is locally trivial in the analytic topology.
\end{corollary}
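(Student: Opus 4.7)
The plan is to run the proof of Theorem \ref{theoremmain} verbatim in this projective relative setting, substituting the K\"ahler direct-image machinery of Section \ref{subsectionanticanonical} with its projective counterpart, Proposition \ref{propositionkltdirectimage}.

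First, since $X$ is terminal and $-K_{X/C}$ is nef and $\varphi$-big, the relative base-point-free theorem furnishes, for $m \gg 0$, a surjection $\varphi^{*}\varphi_{*}\sO_X(-mK_{X/C}) \twoheadrightarrow \sO_X(-mK_{X/C})$, which induces over $C$ a crepant birational contraction $\mu: X \to X'$ onto a normal projective variety $X'$ with canonical singularities such that the induced fibration $\varphi': X' \to C$ satisfies that $-K_{X'/C}$ is $\varphi'$-ample. I then apply Proposition \ref{propositionkltdirectimage} in its curve case to the klt pair $(X,0)$ and the line bundle $L = -mK_{X/C}$: since $L - K_{X/C} = -(m+1)K_{X/C}$ is nef and $\varphi$-big, the conclusion is that
\[
E_m := \varphi_{*}\sO_X(-mK_{X/C}) = \varphi'_{*}\sO_{X'}(-mK_{X'/C})
\]
is a numerically flat vector bundle for every sufficiently divisible $m \gg 0$.

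Next, fix such an $m$ so that $X' \hookrightarrow \PP(E_m)$. For $d \gg 0$, pushing the exact sequence
\[
0 \to \sI_{X'} \otimes \sO_{\PP(E_m)}(d) \to \sO_{\PP(E_m)}(d) \to \sO_{X'}(-mdK_{X'/C}) \to 0
\]
forward along $\varphi'$ yields $0 \to S_{m,d} \to S^{d}E_m \to E_{md} \to 0$. Both $S^{d}E_m$ and $E_{md}$ are numerically flat, and since numerically flat bundles form an abelian subcategory closed under kernels of surjections (cf. \cite[Sect.3]{Sim92}), $S_{m,d}$ is numerically flat as well. Proposition \ref{propositionloctriv} then gives local triviality of $\varphi'$ in the analytic topology.

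To descend from $\varphi'$ to $\varphi$ I repeat Step 2 of the proof of Theorem \ref{theoremmain}. The general fibre $F$ of $\varphi$ is a smooth weak Fano manifold, so by \cite[Cor.1.1.5]{BCHM10} there are only finitely many terminal models of the general fibre $F'$ of $\varphi'$. Fix an ideal sheaf $\sI \subset \sO_{F'}$ whose blow-up realises $F$. For any $t \in C$ pick an analytic neighbourhood $U$ with $X'_U \simeq U \times F'$, blow up $\sI \otimes \sO_U$ to obtain $\tilde X_U \simeq U \times F$, and compare with $X_U := \varphi^{-1}(U)$. Both $X_U$ and $\tilde X_U$ are minimal models of $X'_U$, hence agree in codimension one by \cite[Thm.3.52]{KM98}; testing a $\varphi$-ample divisor on $X_U$ against curves that deform into the Zariski-open locally trivial locus then shows via \cite[Lemma 6.39]{KM98} that the induced birational map is an isomorphism, so $X_U \simeq U \times F$.

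The main delicate point is the final paragraph: it crucially uses the $\Q$-factoriality and terminality of $X$ (both supplied by the hypothesis) together with BCHM for the weak Fano fibre, so that the MMP tools \cite[Thm.3.52]{KM98} and \cite[Lemma 6.39]{KM98} apply. The positivity input is, by contrast, immediate from the algebraic direct-image result Proposition \ref{propositionkltdirectimage}, so it is only the geometric descent which has to be checked against the stated hypotheses.
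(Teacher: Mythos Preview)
Your approach is correct and is exactly what the paper intends: the sentence preceding Corollary \ref{corollaryMFS} says precisely that the proof of Theorem \ref{theoremmain} goes through in this relative projective setting, replacing the K\"ahler direct-image arguments by Proposition \ref{propositionkltdirectimage}. One small correction: Proposition \ref{propositionkltdirectimage} requires $-(K_{X/T}+\Delta)$ to be $\varphi$-\emph{ample}, not merely $\varphi$-big, so you should invoke it for the pair $(X',0)$ and the relative anticanonical fibration $\varphi'$ (where $-K_{X'/C}$ is indeed $\varphi'$-ample), rather than for $(X,0)$; since $E_m = \varphi_*\sO_X(-mK_{X/C}) = \varphi'_*\sO_{X'}(-mK_{X'/C})$ by crepancy, the conclusion is unchanged.
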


\section{Two-dimensional fibres} \label{sectiondimensiontwo}

In this section we will prove Theorem \ref{theoremmaintwo}. While the positivity of direct image sheaves
will still play an important role, we need some additional geometric information to deduce the smoothness of the Albanese map.
This information will be obtained by describing very precisely the MMP.

\subsection{Reduction to the curve case}

The following lemma shows that the nefness condition imposes strong restrictions on the singularities
of a fibre space.

\begin{lemma} \label{lemmaimportant}
Let $M$ be a normal projective variety, and let $\holom{\varphi}{M}{C}$ be a fibration onto a curve such
that $-K_{M/C}$ is $\Q$-Cartier and nef.
Let $\Delta$ be a boundary divisor on $M$ such that $\Delta \equiv -\alpha K_{M/C}$ for some $\alpha \in [0, 1]$.

If the pair $(M, \Delta)$ is lc (resp. klt) over the generic point of $C$, the pair 
$(M, \Delta)$ is lc (resp. klt).
If the pair $(M, \Delta)$ is lc, every lc center of $(M, \Delta)$ surjects onto $C$.
\end{lemma}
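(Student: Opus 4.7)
The plan is to start from the numerical identity
\[
K_M+\Delta \;\equiv\; (1-\alpha)K_{M/C}+\varphi^*K_C,
\]
which rearranges to $-(K_{M/C}+\Delta)\equiv(1-\alpha)(-K_{M/C})$. This class is nef since $\alpha\le 1$ and $-K_{M/C}$ is nef; relatively, $-(K_M+\Delta)$ is $\varphi$-nef. This $\varphi$-antinefness is the only geometric input available, and the whole proof amounts to extracting singular-locus information from it.

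I would prove the second assertion first and deduce the first from it. For the second, suppose for contradiction that $W$ is an lc center of $(M,\Delta)$ with $\varphi(W)=\{t_0\}$. Take a dlt modification (or a log resolution extracting a log canonical place of $W$) $\pi\colon Y\to M$, and pick a prime component $E$ of $\lceil\Delta_Y\rceil$ with $\pi(E)=W$, where $K_Y+\Delta_Y=\pi^*(K_M+\Delta)$. Then $E\subset(\varphi\circ\pi)^{-1}(t_0)$, so $(\varphi\circ\pi)^*K_C|_E=0$, and restricting the identity gives
\[
\pi^*(K_M+\Delta)|_E \;\equiv\; (1-\alpha)\pi^*K_{M/C}|_E,
\]
which is anti-nef on $E$ since $-\pi^*K_{M/C}$ is nef. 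By adjunction this restriction equals $K_E+\mathrm{Diff}_E$ with $\mathrm{Diff}_E\ge 0$ the different of $\Delta_Y-E$, so $-(K_E+\mathrm{Diff}_E)$ is nef on the projective variety $E$ which is contracted by $\varphi\circ\pi$ to a point. The contradiction then comes from the Shokurov--Kollár connectedness theorem applied relatively to $\varphi$: the non-klt locus of $(M,\Delta)$ restricted to each fiber must be connected, and the existence of $E$ as an isolated divisorial lc place trapped inside $\varphi^{-1}(t_0)$ is incompatible with this connectedness and the antinef adjoint structure of $(E,\mathrm{Diff}_E)$.

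The main obstacle is that standard Shokurov--Kollár connectedness requires $-(K_M+\Delta)$ to be both $\varphi$-nef and $\varphi$-big, and $\varphi$-bigness is not available in general here. I would bridge this gap by exploiting the \emph{global} nefness of $-(K_{M/C}+\Delta)$ (not just its $\varphi$-nefness): adding a small multiple of $\varphi^*A$ for $A$ ample on $C$ recovers the bigness required for connectedness while preserving both lc-ness of the perturbed pair and the vertical position of $W$; alternatively one can invoke the refined connectedness for fibrations over a curve available via the theory of non-lc ideals.

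The first assertion then follows from the second by scaling. Let $c:=\sup\{t\in[0,1]\mid(M,t\Delta)\text{ is lc}\}$. Since $c\Delta\equiv-(c\alpha)K_{M/C}$ with $c\alpha\in[0,1]$, the hypotheses of the lemma apply to $(M,c\Delta)$. If $c<1$, then $(M,c\Delta)$ is lc and must possess an lc center (otherwise it would be klt, and by openness of klt $c$ could be increased, contradicting maximality). By the second assertion this center surjects onto $C$. But $(M,\Delta)$ being lc over the generic point of $C$ forces $(M,c\Delta)$ to be klt there for $c<1$, so no lc center of $(M,c\Delta)$ can dominate $C$---contradiction, hence $c=1$. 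The klt case follows directly: a non-klt center of $(M,\Delta)$ would be an lc center forced by the second assertion to surject onto $C$, contradicting klt-ness over the generic fiber.
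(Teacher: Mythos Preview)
Your reduction of the lc and klt assertions to the second assertion via the lc threshold is sound, and the basic numerical observation $-(K_{M/C}+\Delta)\equiv(1-\alpha)(-K_{M/C})$ is exactly the right starting point. The genuine gap is in your proof of the second assertion.

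First, the proposed repair of Shokurov--Koll\'ar connectedness does not work. Connectedness over $\varphi$ requires $-(K_M+\Delta)$ to be $\varphi$-nef and $\varphi$-big, and adding a multiple of $\varphi^*A$ with $A$ ample on $C$ does nothing for $\varphi$-bigness: a pullback from the base is $\varphi$-trivial. Nor does it make $-(K_M+\Delta)+\varphi^*A$ globally big in general, since $(-(K_{M/C}+\Delta))^{\dim M}$ may well vanish. The vague appeal to ``refined connectedness via non-lc ideals'' is not a substitute. Second, even granting connectedness, you have not explained the contradiction: connectedness of the non-klt locus inside a fibre does not by itself exclude a vertical lc center, and the sentence about ``an isolated divisorial lc place'' being ``incompatible with connectedness and the antinef adjoint structure'' is not an argument.

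The paper proceeds quite differently and avoids connectedness entirely. One takes a dlt blow-up $\mu\colon M'\to M$, writes $K_{M'}+\Delta'=\mu^*(K_M+\Delta)+\sum(a_i+1)E_i$ with $a_i\le-1$, and splits $\Delta'=\Delta'_{hor}+\Delta'_{vert}$. The key claim is that the effective vertical divisor $E:=-\sum_{a_i<-1}(a_i+1)E_i+\Delta'_{vert}$ is zero. To see this, for small $\delta>0$ choose an effective $B\sim_{\Q}-\mu^*(K_{M/C}+\Delta)+\delta H$ with $(M',\Delta'_{hor}+B)$ dlt; then $K_{M'/C}+\Delta'_{hor}+B\sim_{\Q}-E+\delta H$, and since its restriction to a general fibre is $\delta H|_F$, the direct image $(\varphi\circ\mu)_*\big(m(K_{M'/C}+\Delta'_{hor}+B)\big)$ is a nef vector bundle by \cite[Thm.~4.13]{Cam04}. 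Hence $-E+\delta H$ is pseudoeffective for every $\delta>0$, so $-E$ is pseudoeffective; being anti-effective, $E=0$. All three conclusions of the lemma follow immediately from $E=0$. The decisive input is thus positivity of direct images over a curve, not connectedness.
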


\begin{proof}
Let $\holom{\mu}{M'}{M}$ be a dlt blow-up \cite[Thm.10.4]{Fuj11}, i.e. $\mu$ is birational morphism from $M'$ a normal $\Q$-factorial
variety such that if we set
$$
\Delta' := \mu_*^{-1} \Delta + \sum_{E_i \ \mbox{\tiny $\mu$-exc.}} E_i,
$$
then $(M', \Delta')$ is dlt. Moreover one has
$$
K_{M'} +\Delta' = \mu^* (K_M+\Delta) + \sum_{E_i \ \mbox{\tiny $\mu$-exc.}} (a_i+1) E_i,
$$
and $a_i \leq -1$ for all $i$. Let $\Delta'=\Delta'_{hor}+\Delta'_{vert}$ be the decomposition
in the horizontal and vertical part.
Since $M'$ is $\Q$-factorial, the pair $(M', \Delta'_{hor})$ is dlt \cite[Cor.2.39]{KM98} and
$$
K_{M'} +\Delta'_{hor}= \mu^* (K_M+\Delta) + \sum_{E_i, a_i<-1} (a_i+1) E_i - \Delta'_{vert}.
$$
Suppose now that $(M, \Delta)$ is lc over the generic point of $C$.
Then the restriction of  $\sum_{E_i \ \mbox{\tiny $\mu$-exc.}} (a_i+1) E_i$ 
to a general $(\varphi \circ \mu)$-fibre $F$ is empty. Thus the anti-effective divisor
$
- E := \sum_{E_i, a_i<-1} (a_i+1) E_i - \Delta'_{vert}
$
is vertical. We claim that $E=0$. Assuming this for the time being, let us see how to deduce all the statements:
since  $\sum_{E_i, a_i<-1} (a_i+1) E_i=0$ we know that $(M, \Delta)$ is lc. Moreover any lc centre $W$ surjects onto $C$:
otherwise there exists a vertical divisor $E_i$ with discrepancy $-1$. Thus we have $\Delta'_{vert} \neq 0$, a contradiction.

If $(M, \Delta)$ is klt over the generic point of $C$, it is lc by what precedes. 
Moreover any lc centre would be contained in a $\varphi$-fibre, so it would
give a non-zero component of $\Delta'_{vert}$. However we just proved that  $\Delta'_{vert}=0$.

{\em Proof of the claim.}
Since $- \mu^* (K_{M/C}+\Delta) \equiv (\alpha-1) \mu^* K_{M/C}$ is nef, 
we know that for $H$ an ample Cartier divisor on $M'$ and all $\delta>0$, the divisor
$-\mu^* (K_{M/C}+\Delta)+\delta H$ is ample. 
Thus there exists an effective $\Q$-divisor $B \sim_\Q -\mu^* (K_{M/C}+\Delta) + \delta H$ 
such that the pair $(M', \Delta'_{hor}+B)$ is dlt. Thus we have
$$
K_{M'/C} + \Delta'_{hor} + B \sim_\Q  \mu^* (K_{M/C}+\Delta) - E -  \mu^* (K_{M/C}+\Delta) +\delta H \sim_\Q  - E +\delta H. 
$$
Since $E$ does not dominate $C$, the restriction of $K_{M'/C} + \Delta'_{hor} + B$ to the general 
$(\varphi \circ \mu)$-fibre $F$ 
is numerically equivalent to $\delta H|_F$. In particular for $m \in \N$ sufficiently large and divisible the divisor
$m (K_{M'/C} + \Delta'_{hor} + B)$ is Cartier and the restriction to $F$ has global sections.
The pair  $(M', \Delta'_{hor}+B)$ being lc we know by \cite[Thm.4.13]{Cam04} that  
$(\varphi \circ \mu)_* (m (K_{M'/C} + \Delta'_{hor} + B))$ is a nef vector bundle.
The natural morphism
$$ 
(\varphi \circ \mu)^* (\varphi \circ \mu)_* (m (K_{M'/C} + \Delta'_{hor} + B)) \rightarrow m (K_{M'/C} + \Delta'_{hor} + B)
$$
is not zero, so $m (K_{M'/C} + \Delta'_{hor} + B)$ is pseudoeffective.
Thus we see that $- E +\delta H$ is pseudoeffective. 
Taking the limit $\delta \rightarrow 0$ we deduce that the anti-effective divisor $-E$ is pseudoeffective. This proves the claim.
\end{proof}

{\bf Attribution.}
The proof above is a refinement of
the argument in \cite{LTZZ10}. Note that our argument  can be used
to give a simplified proof of \cite[Thm.]{LTZZ10}.

\begin{corollary} \label{corollaryreductioncurve}
Let $X$ be a projective manifold such that $-K_X$ is nef. Let $\holom{\pi}{X}{T}$ be the Albanese map.
Fix an arbitrary point $t \in T$ and let $C \subset T$ be a smooth curve such that $t \in C$
and for $c \in C$ general, the fibre $\fibre{\pi}{c}$ is smooth.

Then $\fibre{\pi}{C}$ is a normal variety with at most canonical singularities. 
\end{corollary}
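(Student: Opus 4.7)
The plan is to assemble three ingredients: that $M := \fibre{\pi}{C}$ is Gorenstein (hence Cohen--Macaulay), that $M$ is normal by Serre's criterion, and that the pair $(M, 0)$ is klt by Lemma \ref{lemmaimportant}; the Gorenstein property then converts klt into canonical via the integrality of discrepancies.

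First I would check that $\pi$ is flat. Using the result of \cite{LTZZ10} quoted in the introduction, $\pi$ is equidimensional with reduced fibres, and an equidimensional morphism from a Cohen--Macaulay source to a regular base is flat. In a neighbourhood of $t$, the smooth curve $C \subset T$ is cut out by $r-1$ regular functions with linearly independent differentials; pulled back through the flat map $\pi$, they form a regular sequence cutting out $M$ scheme-theoretically in the smooth variety $X$. Thus $M$ is a local complete intersection in $X$, in particular Gorenstein. Adjunction, combined with $\omega_T \simeq \sO_T$ and $N_{M/X} = \pi|_M^* N_{C/T}$, gives $K_{M/C} = K_X|_M$, so $-K_{M/C}$ is nef.

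Normality then follows from Serre's criterion once $R_1$ is verified. The flat morphism $\pi|_M : M \to C$ is smooth over a dense open of $C$ by hypothesis, so the singular locus of $M$ is contained in the union of the singular loci of finitely many fibres $\fibre{\pi}{c_i}$. These fibres are reduced of dimension $n-r$, so their singular loci have dimension at most $n-r-1 = \dim M - 2$. Consequently $\operatorname{Sing}(M)$ has codimension at least two in $M$, giving $R_1$ and hence normality.

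With $M$ normal Gorenstein and $-K_{M/C}$ nef, I would apply Lemma \ref{lemmaimportant} with $\Delta = 0$ and $\alpha = 0$: the pair $(M, 0)$ is klt over the generic point of $C$ because $M$ is smooth there, so the lemma forces $(M, 0)$ to be klt globally. The passage from klt to canonical is then automatic: since $K_M$ is Cartier, the discrepancies on any log resolution are integers, and an integer strictly greater than $-1$ is at least $0$, which is precisely the canonical condition. The main obstacle here is not conceptual but one of bookkeeping --- recognising that the equidimensionality statement of \cite{LTZZ10} is exactly what makes $M$ a local complete intersection, yielding in one stroke both the Cohen--Macaulay property needed for normality and the Gorenstein property that upgrades the klt conclusion of Lemma \ref{lemmaimportant} to the desired canonical conclusion.
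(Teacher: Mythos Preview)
Your proposal is correct and follows essentially the same approach as the paper: use \cite{LTZZ10} to get flatness and reduced fibres, deduce that $M$ is Gorenstein and normal via Serre's criterion, then apply Lemma \ref{lemmaimportant} with $\Delta=0$ to obtain klt, and finally upgrade klt to canonical using the Gorenstein property. The only point you leave implicit that the paper spells out is irreducibility of $M$ (the paper notes that the general fibre of $M \to C$ is irreducible, hence so is $M$); this is needed for ``normal variety'' but follows immediately from connectedness once normality is established.
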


\begin{proof}
By \cite[Thm.]{LTZZ10} the fibration $\pi$ is flat, so $\fibre{\pi}{C}$ is Gorenstein and has pure dimension $\dim X-\dim T+1$.
The general fibre of $\fibre{\pi}{C} \rightarrow C$ is irreducible, so $\fibre{\pi}{C}$ is irreducible. 
Since all the $\pi$-fibres are reduced \cite[Thm.]{LTZZ10} we see that 
$\fibre{\pi}{C}$ is smooth in codimension one, hence normal. 
The relative anticanonical divisor $-K_{\fibre{\pi}{C}/C}= -K_X|_{\fibre{\pi}{C}}$ is Cartier and nef.
The general fibre of $\fibre{\pi}{C} \rightarrow C$ is smooth, so $\fibre{\pi}{C}$ is smooth over the generic point of $C$.
Thus the pair $(\fibre{\pi}{C}, 0)$ is klt by Lemma \ref{lemmaimportant}. Since $\fibre{\pi}{C}$ is Gorenstein, it has at most canonical singularities.
\end{proof}

Corollary \ref{corollaryreductioncurve} reduces Conjecture
\ref{conjecturealbanese} to the following problem:

\begin{conjecture} \label{conjecturerelativealbanese}
Let $M$ be a normal projective variety with at most canonical Gorenstein singularities.
Let \holom{\varphi}{M}{C} be a fibration onto a smooth curve $C$ such that
$-K_{M/C}$ is nef. If the general fibre $F$ is smooth, then $\varphi$ is smooth.
If $F$ is smooth and simply connected, then $\varphi$ is locally trivial in the analytic topology.
\end{conjecture}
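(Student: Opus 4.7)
The plan is to reduce the statement, via the relative Minimal Model Program over $C$, to a birational model $M'/C$ where the direct image techniques of Section \ref{sectionpositivity} apply, and then to transfer local triviality back to $M$ using the finiteness of terminal models.

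First I would exploit the structural consequences of nefness already encoded in Lemma \ref{lemmaimportant}. Applied with $\Delta=0$ and $\alpha=0$, since $(M,0)$ is canonical on the generic fibre (where $F$ is smooth), the pair $(M,0)$ is klt and every non-klt locus surjects onto $C$; combined with $M$ being Gorenstein canonical, every $\varphi$-fibre is reduced and of pure codimension one (as in Corollary \ref{corollaryreductioncurve}). This allows us to replace $M$ by a $\Q$-factorial terminal model $\tilde M$ via a small crepant resolution without losing any hypothesis.

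Next I would split into two cases according to whether $-K_{M/C}$ is $\varphi$-big. In the $\varphi$-big case, run a relative MMP $\tilde M \dashrightarrow M'$ over $C$; since $-K_{\tilde M/C}$ is nef and $\varphi$-big, a suitable choice of output $M'/C$ should be a Mori fibre space (or the relative anticanonical model) on which $-K_{M'/C}$ is nef. Applying Proposition \ref{propositionkltdirectimage} one obtains that $E_m := \varphi'_{*}\sO_{M'}(-m K_{M'/C})$ is numerically flat for $m\gg 0$, and then Proposition \ref{propositionloctriv} gives local triviality of $M' \to C$. Finally, to pass back to $M$, use the argument of the second step of the proof of Theorem \ref{theoremmain}: the general fibre of $\tilde M \to C$ is a weak Fano (smooth by hypothesis), so by \cite{BCHM10} its terminal models form a finite set, yielding finitely many isomorphism classes of fibres. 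An application of \cite[Lem.\,6.39]{KM98} then shows that the birational map over a small analytic disk is an isomorphism, giving local triviality of $\tilde M \to C$ and hence of $M \to C$; simple connectivity of $F$ transfers directly.

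The hard case, and the genuine obstacle, is when $-K_{M/C}$ is not $\varphi$-big. Then on the general fibre $-K_F$ is nef of numerical dimension strictly less than $\dim F$, and controlling the MMP becomes delicate: nefness of $-K_{M'/C}$ may be destroyed along a flip or divisorial contraction, as in the threefold example \cite[Ex.\,4.8]{a8}. Here I would follow the paper's new strategy of restricting the MMP to a (pluri-)anticanonical divisor $D \in |-mK_{M/C}|$ — on $D$ the numerical dimension of $-K_{M/C}|_{D}$ is zero or one, which forces rigid behaviour of the MMP in a neighbourhood of $D$ (cf.\ Lemma \ref{lemmanu2}). The main point, and the real bottleneck, is to establish the existence of an effective divisor in $|-mK_M|$ for some $m \in \N$: this is tractable for $\dim F \leq 2$ via classification and abundance for surfaces, but is essentially an abundance-type question in higher dimensions. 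Assuming the existence of such sections, one concludes that $K_F \equiv 0$, invokes the Beauville--Bogomolov decomposition on the smooth fibre, and handles the abelian/Calabi--Yau factors using the arguments of \cite{Cao13,Pau12} to produce the desired local triviality.
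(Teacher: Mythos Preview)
This statement is Conjecture~\ref{conjecturerelativealbanese}, which the paper explicitly poses as an \emph{open problem}: it is introduced as the problem to which Corollary~\ref{corollaryreductioncurve} reduces Conjecture~\ref{conjecturealbanese}, and the paper only establishes it under strong additional hypotheses (weak Fano general fibre via Corollary~\ref{corollaryMFS}, or fibre dimension two via Proposition~\ref{propositionMMPtotal} combined with the nonvanishing results of Subsection~\ref{subsectionmainresult}). There is no proof of the general statement in the paper for you to compare against.

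Your $\varphi$-big case is essentially the content of Corollary~\ref{corollaryMFS}, and your outline there is sound (indeed more elaborate than needed: once $-K_{\tilde M/C}$ is nef and $\varphi$-big on a terminal $\Q$-factorial model, Corollary~\ref{corollaryMFS} applies directly without running any further MMP).

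The non-$\varphi$-big case, however, contains genuine gaps that are precisely the reason the conjecture remains open. First, the existence of an effective divisor in $|-mK_{M/C}|$ is not known in general; you correctly identify this as an abundance-type obstruction, but you then proceed as if it were available. Second, even granting such sections, your deduction ``one concludes that $K_F \equiv 0$'' is incorrect: if $-K_F$ is nef and not big, its numerical dimension can be anything from $0$ to $\dim F-1$, and the paper's entire Subsection~\ref{subsectionMMP} is devoted to the case $\nd(-K_F)=1$ with $\dim F=2$, which is not covered by Beauville--Bogomolov. Third, your appeal to Lemma~\ref{lemmanu2} is illegitimate in higher fibre dimension: that lemma, and the surrounding arguments (Lemmas~\ref{lemmanefoverC}, \ref{lemmanu1surfaces}, \ref{lemmapreparation}), are stated and proved only under the hypothesis $\dim X_C-\dim C=2$, because they rely on the flipping loci being vertical and on surface-specific Zariski-decomposition arguments. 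Extending these to arbitrary fibre dimension is exactly the missing ingredient.
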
 

\subsection{Running the MMP for fibres of dimension two} \label{subsectionMMP}

In the previous section we reduced Conjecture \ref{conjecturealbanese} to the case 
of a fibration onto a curve such that the total space has canonical singularities. In this section we will
make the stronger assumption that the total space is $\Q$-factorial with terminal singularities.
Moreover we will assume at some point the existence of an effective relative anticanonical divisor.
In Subsection \ref{subsectionmainresult} we will see how to verify these additional conditions.

\begin{setup} \label{setup}
{\rm
Let $X_C$ be a normal $\Q$-factorial projective variety with at most terminal singularities, and 
let \holom{\varphi}{X_C}{C} be a fibration onto a smooth curve $C$. Suppose that the general $\varphi$-fibre is uniruled.
By \cite{BCHM10} we know that $X/C$ is birational to a Mori fibre space, and we denote by
\begin{equation} \label{MMP}
X_C=:X_0 \stackrel{\mu_0}{\dashrightarrow} X_1  \stackrel{\mu_1}{\dashrightarrow}
\ldots  \stackrel{\mu_k}{\dashrightarrow} X_k  
\end{equation}
a MMP over $C$, that is for every $i \in \{0, \ldots, k-1\}$ the map
$\merom{\mu_i}{X_i}{X_{i+1}}$ is either a divisorial Mori contraction of a
$K_{X_i/C}$-negative extremal ray in $\NE{X_i/C}$  or the flip of a small contraction 
of such an extremal ray. Note that all the varieties $X_i$ 
are normal $\Q$-factorial with at most terminal singularities and endowed with a
fibration $X_i \rightarrow C$.
Moreover $X_k \rightarrow C$ admits a Mori fibre space structure \holom{\psi}{X_k}{Y} onto a normal variety
$\holom{\tau}{Y}{C}$. We know 
that $Y$ is $\Q$-factorial \cite[Prop.7.44]{Deb01}, moreover $Y$ has at most klt singularities \cite[Thm.0.2]{Amb05}.
}
\end{setup}

\begin{remark} \label{remarkflip}
If $\mu_i$ is a flip, denote by $\Gamma_i$ the normalisation of its graph and by $\holom{p_i}{\Gamma_i}{X_i}$
and  $\holom{q_i}{\Gamma_i}{X_{i+1}}$ the natural maps. 
By a well-known discrepancy computation \cite[Lemma 9-1-3]{Mat02} we have
\begin{equation} \label{flipdiscrepancy}
p_i^* (-K_{X_i/C}) = q_i^*(-K_{X_{i+1}/C}) - \sum a_{i, j} D_{i,j}
\end{equation}
with $a_{i,j}>0$ where the sum runs over all the $q_i$-exceptional divisors. 
\end{remark}

\begin{remark} \label{remarkdivisorial}
If $\mu_i$ is a divisorial contraction, let $D_i \subset X_i$ be the exceptional divisor. We have 
\begin{equation} \label{divisorialdiscrepancy}
-K_{X_i/C} = \mu_i^*(-K_{X_{i+1}/C}) - \lambda_i D_i
\end{equation}
with $\lambda_i>0$.
\end{remark}

We explained in the introduction that the nefness of $-K_{X_C/C}$ is usually not preserved under the MMP. However
we have the following:

\begin{lemma} \label{lemmaMMPbasic}
In the situation of Setup \ref{setup}, suppose that
\begin{enumerate}[(i)]
\item $-K_{X_C/C}$ is pseudoeffective. Then $-K_{X_i/C}$ is pseudoeffective. 
\item $-K_{X_C/C}$ is nef in codimension one. Then $-K_{X_i/C}$ is nef in codimension one. 
\item $-K_{X_C/C}$ is nef. If $B \subset X_i$ is a curve such that $-K_{X_i/C} \cdot B<0$, then $B$ is 
(a strict transform of) a curve contained in the flipping locus or the image of a divisorial contraction. 
\end{enumerate}
\end{lemma}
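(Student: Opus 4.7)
The plan is to prove the three assertions together by induction on the MMP step $i$, applying the transformation formulas of Remarks \ref{remarkdivisorial} and \ref{remarkflip} to control how $-K_{X_i/C}$ changes. Write $L_j := -K_{X_j/C}$. The base case $i=0$ is trivial: (i) and (ii) are the hypotheses, while (iii) holds vacuously since $L_0$ is nef.

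For the inductive step when $\mu_i$ is divisorial, Remark \ref{remarkdivisorial} gives $\mu_i^{*} L_{i+1} = L_i + \lambda_i D_i$ with $\lambda_i > 0$. Part (i) follows by taking $(\mu_i)_{*}$ of both sides. For (ii), any prime divisor $D \subset X_{i+1}$ is distinct from the codimension $\geq 2$ locus $\mu_i(D_i)$, so has a strict transform $D' \neq D_i$ in $X_i$; restricting the formula to $D'$ expresses $(\mu_i|_{D'})^{*}(L_{i+1}|_D)$ as the sum of the pseudoeffective class $L_i|_{D'}$ (by the inductive hypothesis of (ii)) and the effective class $\lambda_i D_i|_{D'}$, and pushing down via the birational morphism $\mu_i|_{D'}$ yields the conclusion. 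For (iii), given $B \subset X_{i+1}$ with $L_{i+1} \cdot B < 0$, either $B \subset \mu_i(D_i)$ and we are done, or its strict transform $B' \subset X_i$ satisfies $L_i \cdot B' \leq L_{i+1} \cdot B < 0$ (since $D_i \cdot B' \geq 0$), and induction applies.

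For the inductive step when $\mu_i$ is a flip, on the normalised graph $\Gamma_i$ Remark \ref{remarkflip} gives $q_i^{*} L_{i+1} = p_i^{*} L_i + \sum_{j} a_{i,j} D_{i,j}$, where the $D_{i,j}$ are simultaneously the $p_i$-exceptional and $q_i$-exceptional divisors (these coincide because the flipping locus in $X_i$ and the flipped locus in $X_{i+1}$ both have codimension $\geq 2$, so the exceptional loci on both sides of $\Gamma_i$ agree). The same three-step scheme now gives (i) and (ii) by pulling up to $\Gamma_i$ (respectively to the strict transform $\tilde D \subset \Gamma_i$ of a prime divisor $D \subset X_{i+1}$), adding the effective contribution $\sum_{j} a_{i,j} D_{i,j}$, and pushing down via $q_i$ (respectively $q_i|_{\tilde D}$, which is birational). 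For (iii), if $B$ is not contained in the flipped locus then its strict transform $\tilde B \subset \Gamma_i$ avoids all the $D_{i,j}$ and projects via $p_i$ to a curve $B' \subset X_i$ satisfying $L_i \cdot B' \leq L_{i+1} \cdot B < 0$, and induction finishes the argument. I do not expect a serious technical obstacle; the main care is bookkeeping of strict transforms and verifying they remain divisors or curves of the expected dimension, and the key conceptual content of (iii) --- that failure of nefness of $L_i$ is confined to curves born from earlier exceptional loci --- is encoded automatically by the induction.
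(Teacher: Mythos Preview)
Your proof is correct and follows essentially the same approach as the paper: both use the discrepancy formulas \eqref{flipdiscrepancy} and \eqref{divisorialdiscrepancy} to compare $-K_{X_i/C}$ with $-K_{X_C/C}$, deriving the key inequality that intersection numbers with curves not in the exceptional loci can only increase. The paper collapses your induction into a single step by working directly on the graph of the composite map $X_C \dashrightarrow X_i$, writing $p^*(-K_{X_C/C}) = q^*(-K_{X_i/C}) - D$ with $D$ effective and $q$-exceptional; your step-by-step induction is equivalent but more explicit.

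One minor inaccuracy: in the flip case for (iii) you write that $\tilde B$ ``avoids all the $D_{i,j}$'', but you only know $\tilde B$ is not \emph{contained} in any $D_{i,j}$. This is harmless, since $D_{i,j} \cdot \tilde B \geq 0$ still holds and that is all you need for the inequality $L_i \cdot B' \leq L_{i+1} \cdot B$.
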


\begin{remark} \label{remarkMMPdimtwo}
A $\Q$-Cartier divisor on a surface is nef in codimension one if and only if it is nef.
Thus the lemma shows that for {\em surfaces} the MMP preserves the property that $-K_{X_C/C}$ is nef.
\end{remark}

\begin{proof}  Our proof follows the arguments in \cite[Prop.2.1, Prop.2.2]{PS98}.
Let $\Gamma$ be the normalisation of the graph of the birational map $X \dashrightarrow X_i$, and
denote by $\holom{p}{\Gamma}{X}$ and $\holom{q}{\Gamma}{X_i}$ the natural maps. By
\eqref{flipdiscrepancy} and \eqref{divisorialdiscrepancy} we have
$$
p^* (-K_{X_C/C}) = q^* (-K_{X_i/C}) - D,
$$
with $D$ an effective $q$-exceptional $\Q$-divisor. In particular
if $B \subset X'$ is a curve that is not contained in $q(D)$ and $B_\Gamma \subset \Gamma$ its strict transform, 
then we have
\begin{equation} \label{increase}
p^*(-K_{X_C/C}) \cdot B_\Gamma \leq -K_{X_i/C} \cdot B.
\end{equation}
This proves the statements $(i)$ and $(iii)$. 

Suppose now that 
$-K_{X_C/C}$ is nef in codimension one. Let $S \subset X_i$ be a prime divisor, and
denote by $S_\Gamma \subset \Gamma$ its strict transform. Since $X_i \dashrightarrow X$ does not contract
a divisor, we see that $S_\Gamma$ is not $p$-exceptional. Hence $p^*(-K_{X_C/C})|_{S_\Gamma}$
is pseudoeffective. Thus we see that $q^*(-K_{X_i/C})|_{S_\Gamma}$ is pseudoeffective,
hence $-K_{X_i/C}|_S$ is pseudoeffective.
\end{proof}

We will now restrict ourselves to the case where $\dim X_C-\dim C=2$.
This allows to study
the positivity of $-K_\bullet$ on horizontal curves. 

\begin{definition} \label{definitionnefoverC}
Let $M$ be a projective variety admitting a fibration $\holom{f}{M}{C}$ onto a curve $C$.
A $\Q$-Cartier divisor $L$ on $M$ is nef over $C$ if 
for any curve $B \subset M$ such that $L \cdot B<0$, the image $f(B)$ is a point.
\end{definition}

\begin{remark} \label{remarkflippingloci}
Note that if $\dim X_C-\dim C=2$, the exceptional locus
of a small contraction cannot surject onto $C$, so if $\mu_i$ is the corresponding step of the MMP,
the flipping loci are contained in
fibres of the maps $X_i \rightarrow C$ and $X_{i+1} \rightarrow C$.
In particular if $-K_{X_i/C}$ is nef over $C$, then $-K_{X_{i+1}/C}$ is nef over $C$
by \eqref{increase}. The same holds if the exceptional divisor $D_i$ of a divisorial
contraction does not surject onto $C$.
\end{remark}

\begin{lemma}  \label{lemmanefoverC}
In the situation of Setup \ref{setup}, suppose that $\dim X_C-\dim C=2$ and $-K_{X_i/C}$
is nef over $C$. Then $-K_{X_{i+1}/C}$ is nef over $C$.
\end{lemma}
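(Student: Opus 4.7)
\textit{Plan.} The proof divides according to the type of the MMP step $\mu_i$. By Remark~\ref{remarkflippingloci} the statement is already known when $\mu_i$ is a flip, and also when $\mu_i$ is a divisorial contraction whose exceptional divisor $D_i$ does not surject onto $C$: in both cases the discrepancy formulas \eqref{flipdiscrepancy} or \eqref{divisorialdiscrepancy} combined with the estimate \eqref{increase} (applied to horizontal curves only) transfers the ``nef over $C$'' property from $X_i$ to $X_{i+1}$. So the only case left to handle is that of a divisorial contraction $\mu_i\colon X_i\to X_{i+1}$ whose exceptional divisor $D_i$ surjects onto $C$. Since $\dim X_i - \dim C = 2$ forbids the image $\mu_i(D_i)$ from being a point, $B_{i+1}:=\mu_i(D_i)$ is a horizontal curve and $\mu_i|_{D_i}\colon D_i\to B_{i+1}$ is a fibration whose general fibres are the rational curves generating the Mori extremal ray being contracted.

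For a horizontal curve $B\subset X_{i+1}$ with $B\neq B_{i+1}$, the strict transform $B'\subset X_i$ is horizontal and not contained in $D_i$, so \eqref{divisorialdiscrepancy} yields $-K_{X_{i+1}/C}\cdot B = -K_{X_i/C}\cdot B' + \lambda_i\,D_i\cdot B' \geq 0$, both summands being non-negative. Thus the only delicate point is to verify $n := -K_{X_{i+1}/C}\cdot B_{i+1} \geq 0$. Here I would first observe that $-K_{X_i/C}|_{D_i}$ is a nef divisor on the surface $D_i$: horizontal curves in $D_i$ pair non-negatively with $-K_{X_i/C}$ by hypothesis, while vertical curves in $D_i$ lie in fibres of $\mu_i|_{D_i}$ (as $B_{i+1}$ surjects onto $C$), hence in the Mori extremal ray, on which $-K_{X_i/C}$ is strictly positive. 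Next, the identity $\mu_i^*(-K_{X_{i+1}/C})|_{D_i} = -K_{X_i/C}|_{D_i} + \lambda_i\,D_i|_{D_i}$ forces its left-hand side to be numerically equivalent to $n\cdot F$, where $F$ denotes the fibre class of $\mu_i|_{D_i}$.

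To extract $n\geq 0$, I would test against a carefully chosen horizontal multi-section $\sigma\subset D_i$: the projection formula gives $d\,n = -K_{X_i/C}\cdot\sigma + \lambda_i\,(D_i|_{D_i})\cdot\sigma$ with $d = \sigma\cdot F > 0$, the first summand is non-negative by the nefness established above, and the key is to find $\sigma$ with $(D_i|_{D_i})\cdot\sigma\geq 0$. Since $D_i$ is (up to terminal singularities) the projectivisation of the conormal sheaf of $B_{i+1}$ in $X_{i+1}$, sections of $\mu_i|_{D_i}$ correspond to line-bundle quotients of this conormal sheaf, and their pairing with $D_i|_{D_i}$ is controlled by the degree of the corresponding quotient. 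Taking $\sigma$ to come from the maximal-degree quotient---equivalently, from the Harder--Narasimhan destabilising sub-line bundle of $N_{B_{i+1}/X_{i+1}}$---gives the required $(D_i|_{D_i})\cdot\sigma\geq 0$. The main obstacle is precisely this last Harder--Narasimhan step: in the smooth blow-up model it amounts to a short explicit computation (nefness applied to both the minimal and maximal section, combined with adjunction, forces $n$ to equal the sum of the HN slopes of $N_{B_{i+1}/X_{i+1}}$, which are individually non-negative), but this must be upgraded to the $\Q$-factorial terminal 3-fold setting, where $D_i$ may be singular, $\mu_i|_{D_i}$ only a generalised conic bundle, and the discrepancy $\lambda_i\in\Q_{>0}$ may differ from $1$.
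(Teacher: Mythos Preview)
Your reduction to the one delicate case---a divisorial contraction whose exceptional divisor $D_i$ surjects onto $C$, with image the horizontal curve $B_{i+1}=\mu_i(D_i)$---is exactly the paper's reduction, and your observation that $-K_{X_i/C}|_{D_i}$ is nef (horizontal curves by hypothesis, vertical curves because they lie in the contracted ray) is also the paper's first step. After that the two arguments diverge.

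Your approach is direct: produce a multisection $\sigma\subset D_i$ with $(D_i|_{D_i})\cdot\sigma\geq 0$ and read off $n\geq 0$ from the identity $dn=-K_{X_i/C}\cdot\sigma+\lambda_i(D_i|_{D_i})\cdot\sigma$. In the smooth blow-up model this can indeed be made to work via adjunction plus Riemann--Hurwitz: one computes $n=-\deg K_{B_{i+1}/C}-\deg N$ (with $N=N_{B_{i+1}/X_{i+1}}$), while nefness of $-K_{X_i/C}|_{D_i}$ gives $(-K_{X_i/C}|_{D_i})^2=2n+\deg N\geq 0$; combining yields $n\geq\deg K_{B_{i+1}/C}\geq 0$. (A small slip: your parenthetical says $n$ equals the sum of the HN slopes of $N$ and that these are individually non-negative---in fact they are non-positive, and $n$ differs from their sum by $-\deg K_{B_{i+1}/C}$.) As you yourself flag, the obstacle is the $\Q$-factorial terminal setting: $D_i$ may be singular, $\mu_i|_{D_i}$ need not be a $\PP^1$-bundle, $\lambda_i$ need not be $1$, and the normal-bundle/section picture does not immediately survive.

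The paper sidesteps this obstacle by arguing by contradiction and pushing the problem to a surface where the singularities can be resolved. Assume $-K_{X_{i+1}/C}\cdot B_{i+1}<0$. Then for \emph{every} horizontal curve $B\subset D_i$ one has
\[
-\lambda_i\,D_i\cdot B \;=\; -K_{X_i/C}\cdot B \;+\; K_{X_{i+1}/C}\cdot(\mu_i)_*B \;>\;0,
\]
since the first term is $\geq 0$ and $(\mu_i)_*B$ is a positive multiple of $B_{i+1}$. Together with $\mu_i$-ampleness of $-D_i$ this shows $-D_i|_{D_i}$ is positive on all curves; a direct computation gives $(-D_i|_{D_i})^2>0$, so by Nakai--Moishezon $-D_i|_{D_i}$ is ample, and hence $-(K_{X_i/C}+D_i)|_{D_i}$ is ample. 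Now pass to the composition $\nu\colon\hat D_i\to D_i$ of normalisation and minimal resolution. The adjunction formula for singular surfaces gives $-K_{\hat D_i/C}=\nu^*\bigl(-(K_{X_i/C}+D_i)|_{D_i}\bigr)+F$ with $F$ effective and vertical over $B_{i+1}$ (since $D_i\to B_{i+1}$ is generically a $\PP^1$-bundle). Thus $-K_{\hat D_i/C}$ is nef over $C$ and big. Running a surface MMP over $C$ one reaches a Mori fibre space $\tilde D_i\to C$ with $-K_{\tilde D_i/C}$ nef and big, contradicting Lemma~\ref{lemmanumericaldimensionprojective}, which forces $(-K_{\tilde D_i/C})^2=0$.

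In short: your proposal is correct in spirit and in the smooth model, but the honest gap you point out is real. The paper's contradiction argument trades the conormal-bundle computation for an adjunction-to-a-surface step (normalisation $+$ minimal resolution absorbs all the singularities), and then invokes the global constraint $(-K_{S/C})^{\dim S}=0$ for a klt pair with nef and relatively big anticanonical divisor. This is what makes the argument go through without any smoothness assumption on $D_i$ or on $\mu_i$.
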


\begin{proof}
By what precedes it is sufficient to consider the case where 
$\holom{\mu_i}{X_i}{X_{i+1}}$ contracts a divisor $D_i$
onto a curve $C_0$ such that $C_0 \rightarrow C$ is surjective.

We will argue by contradiction, so suppose that $-K_{X_{i+1}/C} \cdot C_0 < 0$. 
Since $-K_{X_i/C}$ is nef over $C$, the restriction of $-K_{X_i/C}$ to 
$D_i \rightarrow C_0$ is nef over $C_0$. 
Since $\mu_i$ is a Mori contraction $-K_{X_i/C}|_{D_i}$ is $\mu_i|_{D_i}$-ample,
so $-K_{X_i/C}|_D$ is nef. By \eqref{divisorialdiscrepancy} one has
$$
-K_{X_i/C} = \mu_i^*(-K_{X_{i+1}/C}) - \lambda_i D_i.
$$
In particular if $B \subset D_i$ is any curve that is not contracted by $\mu_i$, then
$$
-  \lambda_i D_i \cdot B = -K_{X_i/C} \cdot B + K_{X_{i+1}/C} \cdot (\mu_i)_*(B) > 0.
$$
Since $-D_i$ is $\mu_i|_{D_i}$-ample, this shows that $-D_i|_{D_i}$ is positive on all curves. 
Moreover we have
$$
(-  \lambda_i D_i|_{D_i})^2 = (-K_{X_i/C}|_{D_i})^2 + 2 (-K_{X_i/C}) \cdot (\mu_i^* K_{X_i/C}) \cdot D_i > 0,
$$
so by the Nakai-Moishezon criterion the divisor $-D_i|_{D_i}$ is ample. 
Thus we see that $-(K_{X_i/C}+D_i)|_{D_i}$ is ample. Let now 
$\holom{\nu}{\hat D_i}{D_i}$ be the composition of normalisation and minimal resolution, then by the adjunction formula \cite{Rei94}
$$
- K_{\hat D_i/C} = -\nu^* (K_{X_i/C}+D_i)|_{D_i} + F,
$$
with $F$ an effective divisor. Since $D_i \rightarrow C_0$ is generically a $\PP^1$-bundle, the divisor $F$
does not surject onto $C_0$. Thus we see that $-K_{\hat D_i/C}$ is nef over $C$, moreover it is big.
Let $\hat D_i \rightarrow \tilde D_i$ be a MMP over $C$ with $\tilde D_i \rightarrow C$ a Mori fibre space.
Then $-K_{\tilde D_i/C}$ is nef and big, a contradiction to Lemma \ref{lemmanumericaldimensionprojective}.
\end{proof}

Combining Lemma \ref{lemmaMMPbasic} and Lemma \ref{lemmanefoverC} we obtain:

\begin{corollary}  \label{corollarynefoverC}
In the situation of Setup \ref{setup}, suppose that $\dim X_C-\dim C=2$ and $-K_{X_C/C}$ is nef.
Then for every $i \in \{1, \ldots, k\}$ the divisor $-K_{X_i/C}$ is nef in codimension one and nef over $C$.
\end{corollary}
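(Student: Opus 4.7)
The plan is to prove both assertions by induction on the MMP step $i$, invoking the two preceding lemmas at each stage; in fact the proof is essentially a bookkeeping exercise since the hard work has already been done.

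For the statement that $-K_{X_i/C}$ is nef in codimension one, I would simply apply Lemma \ref{lemmaMMPbasic}(ii) directly. The hypothesis there is that $-K_{X_C/C}$ is nef in codimension one, and this is automatic from our assumption that $-K_{X_C/C}$ is nef: a nef $\Q$-Cartier divisor is in particular pseudoeffective, and its restriction to any prime divisor $D \subset X_C$ is nef, hence pseudoeffective. Thus Lemma \ref{lemmaMMPbasic}(ii) yields the conclusion for every $i \in \{1, \ldots, k\}$ at once, with no induction required.

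For the statement that $-K_{X_i/C}$ is nef over $C$, I would proceed by induction on $i$. The base case $i=0$ is trivial since $-K_{X_C/C}$ is nef by assumption, and nefness trivially implies nefness over $C$ in the sense of Definition \ref{definitionnefoverC}. For the inductive step, assume $-K_{X_i/C}$ is nef over $C$; then since $\dim X_C - \dim C = 2$, Lemma \ref{lemmanefoverC} applies and tells us that $-K_{X_{i+1}/C}$ is nef over $C$, closing the induction.

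There is no real obstacle here — the content of the corollary is entirely absorbed into the two preceding lemmas, which themselves bear the technical weight (the discrepancy computation of Lemma \ref{lemmaMMPbasic} and the adjunction-plus-Lemma \ref{lemmanumericaldimensionprojective} argument of Lemma \ref{lemmanefoverC}). The only small point worth noting is that the two properties are propagated along the MMP by different mechanisms: nefness in codimension one transfers through the pullback inequality on strict transforms of divisors (which avoids the exceptional loci in codimension one), whereas nefness over $C$ is a statement about horizontal curves, and here the dimension hypothesis $\dim X_C - \dim C = 2$ is crucial to force the flipping loci to be vertical and to make the adjunction argument work for a divisorial contraction surjecting onto $C$.
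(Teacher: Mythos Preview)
Your proposal is correct and matches the paper's approach exactly: the paper simply states that the corollary follows by combining Lemma \ref{lemmaMMPbasic} and Lemma \ref{lemmanefoverC}, which is precisely the two-pronged argument you spell out. Your additional remark that nefness implies nefness in codimension one (so that Lemma \ref{lemmaMMPbasic}(ii) applies) and your inductive unpacking of Lemma \ref{lemmanefoverC} are the natural elaborations of what the paper leaves implicit.
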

 
Our goal will be to prove that $-K_{X_i/C}$ is actually nef, but this needs some serious preparation.

\begin{lemma} \label{lemmanu1surfaces}
Let $S$ be an integral projective surface admitting a morphism $\holom{f}{S}{C}$ onto a curve $C$.
Let $L$ be a $\Q$-Cartier divisor on $S$ that is pseudoeffective and nef over $C$.
  
Let $S'$ be an integral projective surface admitting a morphism $\holom{f'}{S'}{C}$ onto $C$. 
Let $L'$ be a nef and $f'$-ample $\Q$-Cartier divisor on $S'$ such that $(L')^2=0$.

Suppose that there exists a birational morphism  $\holom{\mu}{S}{S'}$ such that $f' \circ \mu = f$.
Suppose that we have
$$
L \equiv \mu^* L' - E
$$
with $E$ an effective $\Q$-Cartier divisor. Then the following holds:

\begin{enumerate}[(i)]
\item If $L$  is $\mu$-nef, then $L$ is nef and not big. Moreover we have $L \cdot \mu^* L' = 0$.
\item If $L$ is $\mu$-nef and $E$ is $f$-vertical, then $E=0$.
\end{enumerate}
\end{lemma}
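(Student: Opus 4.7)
The plan is to first establish the key intersection identity $L \cdot \mu^* L' = 0$ and then deduce both parts of the lemma from it via a Zariski-decomposition argument combined with standard negative-definiteness results for exceptional intersection forms. Since $L$ is pseudoeffective and $\mu^* L'$ is nef, one has $L \cdot \mu^* L' \ge 0$. On the other hand,
\[
L \cdot \mu^* L' = (\mu^* L' - E) \cdot \mu^* L' = (L')^2 - E \cdot \mu^* L' = -\, E \cdot \mu^* L' \le 0,
\]
using $(L')^2 = 0$ together with $E$ effective and $\mu^* L'$ nef. Hence $L \cdot \mu^* L' = 0$ and $E \cdot \mu^* L' = 0$. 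Writing $E = \sum_i e_i E_i$ with $E_i$ irreducible and $e_i > 0$, the second identity forces $L' \cdot \mu_* E_i = 0$ for every $i$; since $L'$ is $f'$-ample, any nonzero $\mu_* E_i$ contained in a fibre of $f'$ would satisfy $L' \cdot \mu_* E_i > 0$, so every vertical component $E_i$ must be $\mu$-exceptional.

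To prove (i) I will use the Zariski decomposition $L = P + N$ on $S$ (passing to the normalisation if necessary), where $P$ is nef, $N = \sum_j n_j N_j$ is effective with negative definite intersection matrix on its support, and $P \cdot N_j = 0$ for all $j$. The identity $L \cdot \mu^* L' = P \cdot \mu^* L' + N \cdot \mu^* L' = 0$ expresses zero as a sum of two non-negative numbers, so both vanish; the argument of the first paragraph applied to $N$ then shows that each $N_j$ is either horizontal or $\mu$-exceptional. In either case the hypotheses yield $L \cdot N_j \ge 0$ (nef over $C$ in the horizontal case, $\mu$-nef in the exceptional case), and since $L \cdot N_j = P \cdot N_j + N \cdot N_j = N \cdot N_j$, one gets $N \cdot N_j \ge 0$ for every $j$. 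Thus $N^2 = \sum_j n_j (N \cdot N_j) \ge 0$, which together with negative definiteness of the intersection matrix of $N$ forces $N = 0$, so $L = P$ is nef. Finally $L^2 = L \cdot (\mu^* L' - E) = -\, L \cdot E \le 0$ (nef against effective), while $L^2 \ge 0$, so $L^2 = 0$ and $L$ is not big; $L \cdot \mu^* L' = 0$ was already established.

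For (ii), the assumption that $E$ is $f$-vertical combined with the last observation of the first paragraph immediately gives that every component of $E$ is $\mu$-exceptional. Expanding the self-intersection in the opposite direction,
\[
L^2 = (\mu^* L' - E)^2 = (L')^2 - 2\, E \cdot \mu^* L' + E^2 = E^2,
\]
and combining with $L^2 = 0$ from (i) yields $E^2 = 0$. Since the intersection form on any set of irreducible curves contracted by a birational morphism of normal surfaces is negative definite, this forces $E = 0$. The main obstacle will be the Zariski-decomposition step in (i): one must simultaneously exploit the $f'$-ampleness of $L'$ to rule out vertical non-$\mu$-exceptional components of $N$ and the $\mu$-nefness of $L$ to handle the $\mu$-exceptional ones; the remaining steps are formal manipulations of $L = \mu^* L' - E$ and classical negative-definiteness results for Zariski negative parts and for exceptional divisors of birational contractions.
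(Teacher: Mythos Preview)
Your proof is correct and follows essentially the same approach as the paper: reduce to normal surfaces, use the Zariski decomposition $L=P+N$, exploit $(L')^2=0$ and the $f'$-ampleness of $L'$ to see that every vertical component of $N$ (and of $E$) is $\mu$-exceptional, then use $\mu$-nefness and nefness over $C$ of $L$ to force $N\cdot N_j\ge 0$ and hence $N=0$, and finally conclude $L^2=E^2=0$ with negative definiteness of the exceptional intersection form giving $E=0$. The only difference is cosmetic: you first prove $L\cdot\mu^*L'=0$ directly from pseudoeffectivity of $L$ before introducing the Zariski decomposition, whereas the paper computes $P\cdot\mu^*L'=0$ and $(E+N)\cdot\mu^*L'=0$ after writing $P=\mu^*L'-(E+N)$; the substance is identical.
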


Recall that if $L$ is a pseudoeffective $\Q$-divisor on a smooth projective surface,
we can consider its Zariski decomposition
$L = P + N$,
where $P$ is a nef $\Q$-divisor and $N$ an effective $\Q$-divisor such that $P \cdot N=0$.

\begin{proof}
The statement being invariant under normalisation we can suppose without loss of generality that $S$ and $S'$
are normal. For the same reason we can suppose that $S$ is smooth.

Let $L = P + N$  be the Zariski decomposition of $L$, and
let $N = N_{hor} + N_{vert}$ be the decomposition of the 
effective divisor $N$ in its vertical and horizontal components. 

Since $P \equiv \mu^* L' - (E+N)$ and $(L')^2=0$, we have
$$
P \cdot \mu^* L' = -(E+N) \cdot \mu^* L'.
$$
Since $\mu^* L$ and $P$ are nef, the left hand side is non-negative. 
However $-(E+N) \cdot \mu^* L'$ is non-positive, so we get
\begin{equation} \label{orthogonal}
P \cdot \mu^* L' = (E+N) \cdot \mu^* L' = 0.
\end{equation}
In particular for every irreducible component $D \subset N$ we have $D \cdot \mu^* L'=0$.
Since $L'$ is $f'$-ample and $N_{vert} \cdot \mu^* L'=0$ we see that $N_{vert}$ is $\mu$-exceptional.

The divisor $P$ being nef, we have $P^2 \geq 0$
and $(E+N) \cdot P \geq 0$.
However by \eqref{orthogonal} one has
$$
P^2  = [\mu^* L' - (E+N)] \cdot P = - (E+N) \cdot P,
$$
so we get $P^2  = (E+N) \cdot P = 0$.
Yet $(E+N) \cdot P = 0$ and $(E+N) \cdot \mu^* L' = 0$ implies that 
\begin{equation} \label{square}
(E+N)^2 = 0.
\end{equation} 

{\em Proof of  $(i)$.}
Since $N_{vert}$ is $\mu$-exceptional, we know that $L$ is nef on every irreducible
component of $N_{vert}$. Moreover $L$ is nef over $C$, so it is nef 
on every irreducible
component of $N_{hor}$. Thus $L$ is nef on its negative part $N$, hence $N=0$ and $L$ is nef.
In particular we have $L^2=P^2=0$, so $L$ is not big.

{\em Proof of  $(ii)$.} Since $E$ is $f$-vertical and $L'$ is $f'$-ample, 
the equality $E \cdot \mu^* L'=0$ implies that $E$ is $\mu$-exceptional.
Since $N=0$ we know by \eqref{square} that $E^2=0$.
However the intersection matrix of an exceptional divisor is 
negative definite, so we have $E=0$.
\end{proof}

For the next corollary recall that any normal surface is numerically $\Q$-factorial \cite{Sak84}, so
we can define intersection numbers for any Weil divisor. 

\begin{corollary} \label{corollarysurfaces}
Let $Y$ be a normal projective surface 
admitting a fibration $\holom{\tau}{Y}{C}$ onto a smooth curve such that the general fibre is $\PP^1$. 
Suppose that $-K_{Y/C}$ is pseudoeffective and nef over $C$.
Then $-K_{Y/C}$ is nef and $Y \rightarrow C$ is a $\PP^1$-bundle.
\end{corollary}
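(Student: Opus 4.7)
The plan is to apply Lemma \ref{lemmanu1surfaces} with $S = Y$, $L = -K_{Y/C}$, and $(S',L') = (Y', -K_{Y'/C})$ for a suitable $\PP^1$-bundle $\tau' : Y' \to C$. The main work is the construction of $Y'$ and the verification that $L' = -K_{Y'/C}$ is nef on it.

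After a preliminary reduction (e.g.\ a small $\Q$-factorialization followed by a crepant resolution of the Du Val locus, both of which preserve pseudoeffectivity and nefness over $C$ of $-K_{\bullet/C}$) I may assume that $Y$ is a smooth projective surface. Running the relative $K$-MMP over $C$—available because the generic fibre $\PP^1$ forces $K_{Y/C}$ not to be pseudoeffective—produces a birational morphism $\mu_0 : Y \to Y_0$ where $\tau_0 : Y_0 \to C$ is a smooth $\PP^1$-bundle. If $-K_{Y_0/C}$ is nef, set $Y' := Y_0$; otherwise I perform a finite sequence of elementary transformations centred on negative sections of $Y_0$ to semistabilize the associated rank-$2$ bundle, yielding a $\PP^1$-bundle $\tau' : Y' \to C$ on which $-K_{Y'/C}$ is nef. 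After resolving indeterminacies if necessary, the composition $\mu : Y \to Y'$ becomes a birational morphism over $C$.

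On the target side, $L' = -K_{Y'/C}$ is $\tau'$-ample of relative degree $2$, and the Grothendieck relation $\xi^2 = \xi \cdot \pi^*c_1(\mathcal{E})$ on $\PP(\mathcal{E}) = Y'$ yields $(L')^2 = 0$. On the source side, the crepancy relation
$$
K_{Y/C} = \mu^* K_{Y'/C} + \sum_i a_i F_i, \qquad a_i > 0, \; F_i \text{ the } \mu\text{-exceptional divisors},
$$
gives $-K_{Y/C} \equiv \mu^*(-K_{Y'/C}) - E$ with $E := \sum_i a_i F_i \geq 0$, automatically $\tau$-vertical since it is $\mu$-exceptional. Moreover $-K_{Y/C}$ is $\mu$-nef because every step of the construction contracts a $K$-negative, hence $-K$-positive, extremal curve, and this property is inherited by the composition. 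Applying Lemma \ref{lemmanu1surfaces}(ii) forces $E = 0$; part (i) of the same lemma then yields that $-K_{Y/C}$ is nef. Since all $a_i > 0$, the vanishing $E = 0$ means $\mu$ has no exceptional divisor, so $\mu$ is an isomorphism and $\tau : Y \to C$ is the $\PP^1$-bundle $\tau' : Y' \to C$.

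The principal obstacle is constructing a $Y'$ on which $-K_{Y'/C}$ is actually nef: the endpoint of the relative MMP alone does not suffice (the output could be $F_n$ with $n \geq 1$, on which the relative anticanonical has negative intersection with the negative section), and one must additionally semistabilize the ruled surface by elementary transformations and then upgrade the resulting birational map from $Y$ into a genuine morphism. A secondary technical point is the initial reduction from a normal to a smooth $Y$, which is immediate at Du Val singularities but more subtle otherwise—in the applications of this corollary later in the paper the surrounding geometric setup already supplies the needed mildness of singularities.
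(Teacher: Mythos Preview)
Your argument has a genuine gap at the $\mu$-nefness step. You claim that $-K_{Y/C}$ is $\mu$-nef for the full composition $\mu\colon Y\to Y_0$ because ``every step contracts a $K$-negative extremal curve, and this property is inherited by the composition.'' This inheritance fails. Each step of the MMP contracts a $(-1)$-curve \emph{in the intermediate surface}, but the strict transform of that curve in $Y$ can have self-intersection $\leq -2$. Concretely, blowing up a point of $Y_0$, then two distinct points on the resulting exceptional curve, produces a smooth $Y$ whose map to $Y_0$ contracts a chain containing a $(-3)$-curve $C_3$; since $C_3$ lies in a fibre of $\tau$ one has $-K_{Y/C}\cdot C_3=-K_Y\cdot C_3=2+C_3^2=-1<0$, so $-K_{Y/C}$ is not $\mu$-nef. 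Without $\mu$-nefness you cannot invoke Lemma~\ref{lemmanu1surfaces}.

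The paper sidesteps this by induction on $\rho(Y/C)$: contract a \emph{single} $(-1)$-curve $E$ via $\mu\colon Y\to Y'$. Then $-K_{Y/C}\cdot E=1>0$, so $\mu$-nefness holds trivially for this one step. One checks (as in Lemma~\ref{lemmaMMPbasic} and Lemma~\ref{lemmanefoverC}, both elementary for surfaces) that $-K_{Y'/C}$ remains pseudoeffective and nef over $C$; by the induction hypothesis $Y'\to C$ is already a $\PP^1$-bundle with $-K_{Y'/C}$ nef, and Lemma~\ref{lemmanu1surfaces}(ii) forces $E=0$, a contradiction. Thus $\rho(Y/C)=1$.

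Two further remarks. First, your elementary-transformation detour is both problematic (the composite $Y\dashrightarrow Y'$ is not a morphism, and resolving indeterminacies changes $Y$) and unnecessary: since ``nef over $C$'' is preserved by each blow-down, the endpoint $Y_0$ of the MMP already has $-K_{Y_0/C}$ nef over $C$, and on a $\PP^1$-bundle this together with relative ampleness gives nefness outright. Second, for the reduction to the smooth case the paper simply takes the minimal resolution $\nu\colon\hat Y\to Y$, which for any normal surface satisfies $K_{\hat Y}=\nu^*K_Y-E$ with $E\geq 0$; this handles all singularities at once, not just the Du Val ones.
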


\begin{proof} 
{\em Step 1. Suppose that $Y$ is smooth.}
We argue by induction on the relative Picard number. If $\rho(Y/C)=1$, then $-K_{Y/C}$ is nef
and $\tau$-ample. Thus $Y \rightarrow C$ is a $\PP^1$-bundle.
If $\rho(Y/C)>1$ there exists a Mori contraction $\holom{\mu}{Y}{Y'}$ over $C$
and by Lemma \ref{lemmaMMPbasic} the divisor $-K_{Y'/C}$ is pseudoeffective and nef over $C$.
By induction $-K_{Y'/C}$ 
is nef and $Y' \rightarrow C$ is a $\PP^1$-bundle. We have $(-K_{Y'/C})^2=0$ and $-K_{Y'/C}$ 
is ample over $C$.  The $\mu$-exceptional divisor $E$
is $\tau$-vertical, so $E=0$ by Lemma \ref{lemmanu1surfaces}, a contradiction.

{\em Step 2. General case.} 
Let $\holom{\nu}{\hat Y}{Y}$ be the minimal resolution, then  we have $K_{\hat Y} = \nu^* K_Y - E$ with $E$ an effective divisor. 
Thus the relative canonical divisor $-K_{\hat Y/C} = - \nu^* K_{Y/C} + E$
is pseudoeffective and nef over $C$. By Step 1 we know that $\hat Y \rightarrow C$
is a $\PP^1$-bundle. Thus $\nu$ is an isomorphism.
\end{proof}

\begin{remark} \label{remarksurfaces}
In the situation of Corollary \ref{corollarysurfaces} we can write $Y \simeq \PP(V)$ with $V$ a rank two vector bundle on $C$.
Since $-K_{Y/C}$ is nef, the vector bundle $V$ is semistable \cite[Prop.2.9]{MP97}.
Moreover the nef cone $\mbox{Nef}(Y) \subset N^1{Y}$ 
and the pseudoeffective cone $\mbox{Pseff}(Y) \subset N^1{Y}$ coincide \cite[Sect.1.5.A]{Laz04a},
they are generated over $\Z$ by $\frac{-1}{2} K_{Y/C}$ and a fibre $F$ of the ruling $Y \rightarrow C$.
Recall that
on any smooth projective surface a Cartier divisor $L$ is generically nef with respect to all polarisations if and only if it is 
pseudoeffective. Thus we see that $L \rightarrow Y$ is generically nef with respect to all polarisations if and only if $L$ 
is nef if and only if
$
L \equiv \frac{-m}{2} K_{Y/C} + n F
$
with $m, n \in \N_0$.
\end{remark}

We will now use Lemma \ref{lemmanu1surfaces} to obtain strong restrictions on the MMP.

\begin{lemma} \label{lemmanu2}
In the situation of Setup \ref{setup}, suppose that $\dim X_C-\dim C=2$.
Let  $\merom{\mu_i}{X_i}{X_{i+1}}$ be a step of the MMP.

Let $S' \subset X_{i+1}$ be an irreducible surface such that
the map $S' \rightarrow C$ is surjective.
Suppose that $-K_{X_{i+1}/C}|_{S'}$ is nef but not big. Suppose moreover that either
\begin{itemize}
\item we have  $-K_{X_{i+1}/C}|_{S'} \equiv 0$; or
\item the divisor  $-K_{X_{i+1}/C}|_{S'}$ is ample on the fibres of $S' \rightarrow C$. 
\end{itemize}
Then the following holds:
\begin{enumerate}[(i)]
\item Suppose that $\mu_i$ is a flip: then $S'$ is disjoint from the flipping locus. 
\item Suppose that $\mu_i$ is divisorial with exceptional divisor $D_i$. 
Then either $\mu_i(D_i)$ is disjoint from $S'$ or $\mu_i(D_i) \subset S'$. 
If $\mu_i(D_i) \subset S'$, the map $\mu_i(D_i)\rightarrow C$ is surjective and 
$-K_{X_{i+1}/C}|_{S'} \not\equiv 0$.
\item Let $S \subset X_i$ be the strict transform of $S'$. Then $-K_{X_i/C}|_{S}$ is nef but not big.
\end{enumerate}
\end{lemma}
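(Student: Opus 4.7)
The plan is to reduce all three statements to a single application of Lemma \ref{lemmanu1surfaces} on a smooth surface that dominates both $S\subset X_i$ (the strict transform of $S'$) and $S'$. Let $\tilde S$ denote the strict transform of $S'$ in the normalised graph $\Gamma_i$ in the flip case, or in $X_i$ itself in the divisorial case, and let $\nu\colon\hat S\to \tilde S$ be a resolution. Write $\hat p\colon \hat S\to S$ and $\hat q\colon \hat S\to S'$ for the two induced birational morphisms, and set $L':=-K_{X_{i+1}/C}|_{S'}$. Pulling back the discrepancy equation of Remark \ref{remarkflip} (respectively Remark \ref{remarkdivisorial}) produces
\[
L:=\hat p^{*}(-K_{X_i/C}|_S)\equiv \hat q^{*}L'-E,
\]
with $E$ an effective $\Q$-divisor on $\hat S$ whose components sit over $\tilde S\cap D_{i,j}$ in the flip case, and over $S\cap D_i$ in the divisorial case.

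The next step is to verify the hypotheses of Lemma \ref{lemmanu1surfaces} for the triple $(L,L',\hat q)$. The divisor $L$ is pseudoeffective and nef over $C$: this follows from Corollary \ref{corollarynefoverC}, which gives that $-K_{X_i/C}$ is both nef in codimension one and nef over $C$, properties that pass to the restriction to the prime divisor $S$ and then to its $\hat p$-pullback. The divisor $L'$ is nef by hypothesis, and in the second alternative it is $f'$-ample; in both alternatives $(L')^{2}=0$, since a nef divisor on a surface is not big precisely when it has zero self-intersection. The most delicate point is the $\hat q$-nefness of $L$: any $\hat q$-exceptional curve $B\subset\hat S$ is either $\nu$-exceptional, in which case $\hat p(B)$ is a point and $L\cdot B=0$, or its image $\hat p(B)$ is a curve in $X_i$ contracted by the corresponding step of the MMP, namely a flipping curve or a fibre of the ruling $D_i\to \mu_i(D_i)$; on such curves $-K_{X_i/C}$ is strictly positive by the $K$-negativity of the Mori contraction, so $L\cdot B>0$.

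Once these hypotheses are in place, Lemma \ref{lemmanu1surfaces}(i) yields that $L$ is nef and not big. Since $\hat p$ is birational, pushing down gives $-K_{X_i/C}|_S$ nef and not big, proving (iii). In the degenerate first alternative where $L'\equiv 0$, the lemma does not directly apply, but there the equation $L\equiv -E$ exhibits an effective divisor whose negative is pseudoeffective, which after pairing with an ample class forces $E=0$ and hence $L\equiv 0$. For (i) and (ii) I would invoke Lemma \ref{lemmanu1surfaces}(ii): under the additional assumption that $E$ is vertical over $C$, the lemma forces $E=0$. In the flip case $E$ is automatically vertical, because the flipping and flipped loci sit in fibres of $X_{i\pm 1}\to C$; the conclusion $E=0$ translates to $\tilde S$ avoiding every $q_i$-exceptional divisor, i.e.\ $S'$ being disjoint from the flipped locus, which is (i). In the divisorial case, $E$ is vertical in two situations: when $D_i$ is vertical over $C$, and when $D_i$ is horizontal but $\mu_i(D_i)\cap S'$ is finite, so that $S\cap D_i$ is a finite union of ruling fibres of $D_i\to\mu_i(D_i)$, each contained in a fibre of $X_i\to C$. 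In both subcases Lemma \ref{lemmanu1surfaces}(ii) forces $S\cap D_i=\emptyset$, so the only remaining configuration compatible with $\mu_i(D_i)\cap S'\neq\emptyset$ is $\mu_i(D_i)\subset S'$ with $\mu_i(D_i)\to C$ surjective. The nonvanishing $L'\not\equiv 0$ demanded in (ii) is then automatic, since we have just seen that the alternative $L'\equiv 0$ forces disjointness.

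The principal obstacle I foresee is the verification of $\hat q$-nefness of $L$ in the divisorial subcase where $\mu_i(D_i)\subset S'$ and $S$ may contain extra components of $S\cap D_i$ that are collapsed by $\mu_i$ to isolated points of $\mu_i(D_i)$ (for instance when $S$ is tangent to $\mu_i(D_i)$). A careful analysis of the ruling of $D_i$ and of the strict transform is needed to ensure that every such curve still lies in a fibre of $X_i\to C$, so that the $K$-negativity of the Mori contraction does yield $-K_{X_i/C}\cdot \hat p(B)\geq 0$.
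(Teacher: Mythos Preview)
Your argument is correct and follows essentially the same route as the paper's proof. The paper works directly on $\Gamma_S$ (flip case) or on $S$ (divisorial case) rather than on a resolution $\hat S$, but this is cosmetic: the proof of Lemma~\ref{lemmanu1surfaces} itself begins by passing to a smooth model, so your explicit resolution simply front-loads that step. The logical structure---pseudoeffectivity and nefness over $C$ from Corollary~\ref{corollarynefoverC}, $\mu$-nefness from $K$-negativity of the contraction, and then the dichotomy between the case $L'\equiv 0$ (where $L\equiv -E$ forces $E=0$ directly) and the relatively ample case (where Lemma~\ref{lemmanu1surfaces}(ii) handles vertical $E$)---matches the paper exactly.

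Your closing worry is unfounded. In the divisorial case any curve $B\subset\hat S$ contracted by $\hat q$ but not by $\nu$ has image $\nu(B)$ contracted by $\mu_i$; since $\mu_i$ is a morphism \emph{over} $C$, this forces $\nu(B)$ to lie in a fibre of $X_i\to C$ automatically, and since $\mu_i$ is $K_{X_i}$-negative, $-K_{X_i/C}\cdot\nu(B)=-K_{X_i}\cdot\nu(B)>0$. The paper phrases this more succinctly by noting that $-K_{X_i/C}|_S$ is $\mu_i$-ample, which is exactly the same observation. No further analysis of tangencies or stray ruling fibres is needed.
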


\begin{proof}{\em Proof of  (i).}
Arguing by contradiction we suppose that $S'$ is not disjoint from the flipping locus $Z$.
Since $-K_{X_{i+1}/C}|_{S'}$ is nef, the intersection $S' \cap Z$ is non empty and finite.
Using the notation of Remark \ref{remarkflip}, let 
$\Gamma_S \subset \Gamma_i$ be the strict transform of $S'$. 
Restricting \eqref{flipdiscrepancy} to $\Gamma_S$ we obtain
$$
p_i^* (-K_{X_i/C})|_{\Gamma_S} = q_i^*(-K_{X_{i+1}/C})|_{\Gamma_S} 
- (\sum a_{i, j} D_{i, j}) \cap \Gamma_{S}.
$$
The divisor $(\sum a_{i, j} D_{i, j})$ being $\Q$-Cartier, the non-empty intersection
$E:= (\sum a_{i, j} D_{i, j}) \cap \Gamma_S$ is a non-zero $q_i|_{\Gamma_S}$-exceptional effective $\Q$-divisor on $\Gamma_S$.
Since $\Gamma_S$ is not $p_i$-exceptional and surjects onto $C$, it follows
from Corollary \ref{corollarynefoverC} that
$p_i^* (-K_{X_i/C})|_{\Gamma_S}$ is pseudoeffective and nef over $C$,
moreover it is $q_i|_{\Gamma_S}$-nef.
If $-K_{X_{i+1}/C}|_{S'} \equiv 0$ this already gives a contradiction, so suppose now
that $-K_{X_{i+1}/C}|_{S'}$ is ample on the fibres of $S' \rightarrow C$.

Recall that the flipping locus $Z$ is contained in the fibres of $X_{i+1} \rightarrow C$ 
(cf. Remark \ref{remarkflippingloci}), so $E$ is vertical
with respect to the fibration $\Gamma_S \rightarrow C$.
Yet by Lemma \ref{lemmanu1surfaces} applied to the birational map
$\holom{q_i|_{\Gamma_S}}{\Gamma_S}{S'}$ we see that $E=0$, a contradiction.

{\em Proof of  (ii).}
Let $S \subset X_i$ be the strict transform of $S'$, then we have an induced fibration $S \rightarrow C$. 
Using the notation of Remark \ref{remarkdivisorial}  and restricting \eqref{divisorialdiscrepancy} to $S$ we have
\begin{equation} \label{relation}
-K_{X_i/C}|_S = \mu_i^*(-K_{X_{i+1}/C})|_{S'} - \lambda_i (D_i \cap S),
\end{equation}
If $\mu_i(D_i)$ is not disjoint from $S'$,
then $D_i \cap S$ is a non-zero effective divisor. 
Note that the restriction $-K_{X_i/C}|_S$ is pseudoeffective and nef over $C$, moreover 
it is $\mu_i$-ample.
If $-K_{X_{i+1}/C}|_{S'} \equiv 0$ then \eqref{relation} shows that $-K_{X_i/C}|_S$ is anti-effective and not zero,
a contradiction.

Suppose now that $-K_{X_{i+1}/C}|_{S'}$ is ample 
on the fibres of $S' \rightarrow C$. Then we know by Lemma \ref{lemmanu1surfaces} that $D_i \cap S$
is empty unless it has a horizontal component. 
Since $D_i \cap S$ has a horizontal component, the irreducible curve $\mu_i(D_i)$ is contained in $S'$
and surjects onto $C$.

{\em Proof of  (iii).}
If the image of the exceptional (resp. the flipping locus) is disjoint
from $S'$ the statement is trivial. If this is not the case, then by $(i)$ and $(ii)$ 
the contraction $\mu_i$ is divisorial and  
$-K_{X_{i+1}/C}|_{S'}$ is ample on the fibres of $S' \rightarrow C$.
Thus we can apply Lemma \ref{lemmanu1surfaces} to see that 
$-K_{X_i/C}|_{S}$ is nef and not big.
\end{proof}

The next lemma describes the Mori fibre space at the end of the MMP:

\begin{lemma} \label{lemmapreparation}
In the situation of Setup \ref{setup}, suppose that $\dim X_C-\dim C=2$ and that the general $\varphi$-fibre
is rationally connected. Suppose that $Y$ is a surface.

Then $Y$ is smooth, the fibration $\holom{\tau}{Y}{C}$ is a $\PP^1$-bundle and $-K_{Y/C}$ is nef.
Let $\Delta \subset Y$ be the $1$-dimensional part of the $\psi$-singular locus. 
\begin{itemize}
\item If $\Delta \neq 0$, it is a smooth irreducible curve and the map $\Delta \rightarrow C$ is \'etale. We have $\Delta \equiv - \lambda K_{Y/C}$ with $\lambda \in \Q^+$.
Moreover $S':=\fibre{\psi}{\Delta}$ is an irreducible surface such that $-K_{X_k/C}|_{S'}$ is nef but not big. The
restriction $-K_{X_k/C}|_{S'}$ is ample
on the fibres of $S' \rightarrow \Delta$ and if $l \subset S'$ is an irreducible component of a general
fibre of $S' \rightarrow \Delta$, we have $-K_{X_k/C} \cdot l=1$.
\item If $\Delta=0$, then $X_k \rightarrow Y$ is a $\PP^1$-bundle, in particular $X_k$ is smooth.
\end{itemize}
\end{lemma}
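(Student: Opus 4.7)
The plan is to apply Corollary \ref{corollarysurfaces} to $\tau\colon Y \to C$, which will simultaneously give smoothness of $Y$, the $\PP^1$-bundle structure of $\tau$, and the nefness of $-K_{Y/C}$; the structure of $\Delta$ then follows from the resulting conic bundle picture. That the general $\tau$-fibre is $\PP^1$ is immediate: the MMP preserves rational connectedness of general $\varphi$-fibres, so a general $\varphi$-fibre of $X_k$ is a rationally connected surface, and its image under $\psi$ is a rationally connected curve, hence $\PP^1$.

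For the pseudoeffectivity of $-K_{Y/C}$, I would invoke the canonical bundle formula for the Mori contraction $\psi\colon X_k \to Y$: after adding a general $\Q$-boundary to make $K_{X_k}$ relatively trivial, one obtains an effective discriminant $B_Y$ and a pseudoeffective moduli class $M_Y$ on $Y$ such that $-K_{X_k/C}$ is, up to this auxiliary boundary, the pullback of $-K_{Y/C} - B_Y - M_Y$. Pseudoeffectivity of $-K_{X_k/C}$ from Lemma \ref{lemmaMMPbasic} then descends to pseudoeffectivity of $-K_{Y/C}$. For nefness of $-K_{Y/C}$ over $C$: any curve $B \subset Y$ in a $\tau$-fibre has preimage $S := \psi^{-1}(B)$ contained in a $\varphi$-fibre, so $-K_{X_k/C}|_S$ is nef by Corollary \ref{corollarynefoverC}; combining with the relative ampleness of $-K_{X_k/Y}$ on $\psi$-fibres and intersection against a suitably chosen section of $\psi|_S$ yields $-K_{Y/C} \cdot B \geq 0$. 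Applying Corollary \ref{corollarysurfaces} gives the first part of the lemma.

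With $Y$ smooth and $\tau$ a $\PP^1$-bundle, $\psi$ is a conic bundle over $Y$ with $1$-dimensional discriminant $\Delta$. Since $-K_{X_k/Y}$ is $\psi$-ample of fibre-degree $2$ and $X_k$ is terminal (hence smooth in codimension $\leq 2$), each generic singular fibre decomposes as two $\PP^1$'s meeting transversally, giving $-K_{X_k/C} \cdot l = 1$ for each irreducible component $l$. Applying Lemma \ref{lemmaimportant} to the pair $(X_k, \epsilon\psi^*\Delta)$ (with $\epsilon > 0$ small enough to preserve klt on the generic $\varphi$-fibre) forces $\Delta \to C$ to be surjective; ramification would introduce non-terminal singularities of $X_k$ above the ramification points, so $\Delta \to C$ is \'etale and $\Delta$ is in particular smooth. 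The Mori fibre property $\rho(X_k/Y) = 1$ combined with a monodromy argument exchanging the two branches of a singular fibre forces $S' = \psi^{-1}(\Delta)$ to be irreducible (otherwise $\rho(X_k/Y) \geq 2$). By Remark \ref{remarksurfaces} the nef cone of $Y$ is spanned by $-K_{Y/C}/2$ and a fibre $F$; writing $\Delta \equiv \tfrac{m}{2}(-K_{Y/C}) + nF$ with $m > 0$ (by horizontality), the \'etale degree $\Delta \cdot F = m$ together with $\Delta^2$-computations pin down $n = 0$, giving $\Delta \equiv -\lambda K_{Y/C}$ with $\lambda \in \Q^+$. Finally, $-K_{X_k/C}|_{S'}$ is ample on the fibres of $S' \to \Delta$ from $\psi$-ampleness of $-K_{X_k/Y}$, nef by the argument of the previous paragraph restricted to $S'$, and not big since $(-K_{X_k/C})^{\dim X_k - \dim C + 1} = 0$ by Proposition \ref{propositionnumericaldimension}.

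The main obstacle I foresee is the nefness of $-K_{Y/C}$ over $C$: for a component $B$ of a potentially reducible $\tau$-fibre with unfavourable self-intersection, the naive intersection computation on $\psi^{-1}(B)$ does not directly yield $-K_{Y/C} \cdot B \geq 0$, and making it work requires using the Mori fibre property $\rho(X_k/Y) = 1$ in an essential way to constrain the horizontal sections of $\psi|_{\psi^{-1}(B)}$ and rule out this pathology.
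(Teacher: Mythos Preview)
Your overall strategy---reduce to Corollary \ref{corollarysurfaces}, then read off the structure of $\Delta$ from the conic-bundle picture---is the same as the paper's. But there is a genuine gap at the step you yourself flag as the obstacle, and it stems from a misreading of Definition \ref{definitionnefoverC}: in this paper, ``$-K_{Y/C}$ is nef over $C$'' means that $-K_{Y/C}$ is nef on every curve $B\subset Y$ \emph{surjecting onto $C$}, not on curves contained in $\tau$-fibres. Your check (``any curve $B\subset Y$ in a $\tau$-fibre\ldots'') is the wrong set of curves, and your worry about components of reducible $\tau$-fibres is beside the point. What is actually needed is to rule out a \emph{horizontal} curve $B$ with $-K_{Y/C}\cdot B<0$. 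The paper does this via Miyanishi's formula $\psi_*(K_{X_k/C}^2)=-(4K_{Y/C}+\Delta)$: since $-K_{X_k/C}$ is nef in codimension one, $K_{X_k/C}^2$ is a pseudoeffective cycle, so $-(4K_{Y/C}+\Delta)$ is pseudoeffective; and for any horizontal $B$ one has $0\le(-K_{X_k/C})^2\cdot\psi^{-1}(B)=-(4K_{Y/C}+\Delta)\cdot B$, which forces $-K_{Y/C}\cdot B\ge 0$ unless $B\subset\Delta$. The case $B\subset\Delta$ is then handled by a separate subadjunction argument. Your canonical-bundle-formula sketch for pseudoeffectivity is plausible (indeed the paper remarks in a footnote that direct-image positivity results would also do this), but it does not give you control on horizontal curves, which is the point where Corollary \ref{corollarysurfaces} bites.

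There are several smaller problems downstream. Your invocation of Proposition \ref{propositionnumericaldimension} to show $-K_{X_k/C}|_{S'}$ is not big is illegitimate: that proposition is about the Albanese map of a smooth K\"ahler manifold with $-K_X$ nef, whereas here $X_k$ is a possibly singular outcome of an MMP over a curve and $-K_{X_k/C}$ is not known to be nef. The paper instead computes directly $(-K_{X_k/C})^2\cdot S'=-(4K_{Y/C}+\Delta)\cdot\Delta=0$, again from Miyanishi's formula together with $\Delta\equiv-\lambda K_{Y/C}$ and $K_{Y/C}^2=0$. Your application of Lemma \ref{lemmaimportant} to $(X_k,\epsilon\psi^*\Delta)$ requires $\psi^*\Delta\equiv-\alpha K_{X_k/C}$ for some $\alpha\in[0,1]$, which you have not established and which is essentially what you are trying to prove. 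Your claim that ramification of $\Delta\to C$ would force non-terminal singularities is not justified; the paper instead deduces \'etaleness from $\Delta^2=0$ and $K_{Y/C}\cdot\Delta=0$ via adjunction. Finally, you omit the case $\Delta=0$, where the paper needs a separate input (flatness of $\psi$ plus \cite{AR12}) to conclude that $\psi$ is a $\PP^1$-bundle.
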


\begin{remark*}
If $\Delta \neq 0$ it is -a priori- not clear that $X_k$ is Gorenstein and $X_k \rightarrow Y$ is a conic bundle, 
cp. \cite[\S 12]{MP08} and \cite[p.483]{PS98}.
\end{remark*}

\begin{proof}
By Corollary \ref{corollarynefoverC} we know that $-K_{X_k/C}$ is nef in codimension one and nef over $C$.
By Remark \ref{remarknefcodimone} this implies that $K_{X_k/C}^2$ is a pseudoeffective $1$-cycle.
We claim that $-K_{Y/C}$ is pseudoeffective and nef over $C$. 

{\em Proof of the claim.\footnote{For experts it is not difficult to deduce the claim from general results on the positivity
of direct image sheaves, cf. \cite[Cor.0.2]{BP08} \cite[Lemma 3.24]{a8}.}} 
The fibration $\psi$ does not contract a divisor, so it is equidimensional. 
Since a terminal threefold has at most isolated singularities, there are at most finitely many points 
$Z \subset Y$ such that $(X_k \setminus \fibre{\psi}{Z}) \rightarrow (Y \setminus Z)$
is a conic bundle. Thus we have \cite[4.11]{Miy83}
\begin{equation} \label{miyanishi}
\psi_* (K_{X_k/C}^2) = - (4 K_{Y/C} + \Delta).
\end{equation}
The cycle $K_{X_k/C}^2$ is pseudoeffective, 
so its image $-(4 K_{Y/C} + \Delta)$ is pseudoeffective. This already proves that $-K_{Y/C}$ is pseudoeffective.
We will now follow an argument from \cite[p.482]{PS98}:
let $B \subset Y$ be any irreducible curve that surjects onto $C$. Since $-K_{X_k/C}$
is $\psi$-ample and nef over $C$, the restriction $-K_{X_k/C}|_{\fibre{\psi}{B}}$ is nef.
Thus we see by the projection formula and \eqref{miyanishi} that
\begin{equation} \label{discrim}
0 \leq (-K_{X_k/C})^2 \cdot \fibre{\psi}{B} = - (4 K_{Y/C} + \Delta) \cdot B.
\end{equation}
In particular we have $-K_{Y/C} \cdot B \geq 0$ unless $B \subset \Delta$. Arguing by contradiction we suppose
that there exists an irreducible curve $B \subset \Delta$ such that $-K_{Y/C} \cdot B < 0$.
Since $B \subset \Delta$ the inequality \eqref{discrim} then implies
$$
(K_{Y/C}+B) \cdot B \leq  (K_{Y/C}+\Delta) \cdot B = (4 K_{Y/C} + \Delta) \cdot B + 3  (-K_{Y/C} \cdot B) < 0.
$$
Thus if $\tilde B$ is the normalisation of $B$, the subadjunction formula \cite{Rei94} shows that $\deg K_{\tilde B/C}<0$,
a contradiction to the ramification formula. This proves the claim.

We can now describe the surface $Y$: the general fibre of $X_k \rightarrow C$ is rationally connected, so
the general fibre of $Y \rightarrow C$ is $\PP^1$. Thus we know 
by Corollary \ref{corollarysurfaces} that $Y \rightarrow C$ is a ruled surface and 
$-K_{Y/C}$ is nef.

{\em 1st case. $\Delta \neq \emptyset$.}
Since $\psi$ is a conic bundle in the complement of finitely many points, the general
fibre over a point in $\Delta$ is a reducible conic. It is well-known that if $l \subset S'$ is an irreducible
component of such a reducible conic, then $S' \cdot l=-1$ and $-K_{X_k/C} \cdot l=1$. Using $\rho(X_k/Y)=1$ standard arguments prove that $\Delta$ and $S'$ are irreducible, cf. \cite[Rem.2.3.3]{MP08}.

In the proof of the claim we saw that $-(4 K_{Y/C} + \Delta)$ is pseudoeffective.
Since $-K_{Y/C}$ is nef and $(-K_{Y/C})^2=0$ we obtain
$$
0 \leq -K_{Y/C} \cdot (-4 K_{Y/C} - \Delta) = K_{Y/C} \cdot \Delta \leq 0.
$$ 
Since $Y$ is a ruled surface, the equality $-K_{Y/C} \cdot \Delta=0$ implies that
that $\Delta \equiv - \lambda K_{Y/C}$ with $\lambda \in \Q^+$. In particular $\Delta$ surjects onto $C$ and we have $\Delta^2=0$.
By adjunction we see that $K_{\Delta/C}$ has degree $0$, so $\Delta \rightarrow C$ is \'etale and $\Delta$ is smooth.

Since $\Delta$ surjects onto $C$, the divisor
 $-K_{X_k/C}|_{S'}$ is nef and ample on the fibres
of $S' \rightarrow \Delta$.  Using the projection formula and \eqref{miyanishi} we have
$$
(-K_{X_k/C})^2 \cdot S' = - (4 K_{Y/C} + \Delta) \cdot \Delta = 0,
$$
so $-K_{X_k/C}|_{S'}$ is not big.

{\em 2nd case. $\Delta= \emptyset$.}
The terminal threefold $X_k$ is Cohen-Macaulay and the fibration on the smooth base $Y$ is equidimensional,
so $\psi$ is flat. Moreover $\psi$ has at most finitely many singular fibres. Thus $\psi$ is a $\PP^1$-bundle by
\cite[Thm.2.]{AR12}.
\end{proof}

\begin{proposition} \label{propositionMMPnef}
In the situation of Setup \ref{setup}, suppose that $\dim X_C-\dim C=2$ and that the general $\varphi$-fibre
is rationally connected.
Then $-K_{X_k/C}$ is nef.
\end{proposition}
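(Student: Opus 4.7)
Plan: Suppose for contradiction that there is an irreducible curve $B\subset X_k$ with $-K_{X_k/C}\cdot B<0$. By Corollary \ref{corollarynefoverC}, $-K_{X_k/C}$ is nef over $C$, so $\varphi_k(B)$ is a point, i.e.\ $B$ is vertical over $C$.

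The first step is to reduce to the analysis of a single surface using the Mori fibre space $\psi\colon X_k\to Y$ furnished by Lemma \ref{lemmapreparation}. Writing $-K_{X_k/C}=-K_{X_k/Y}+\psi^{*}(-K_{Y/C})$ and using that $-K_{X_k/Y}$ is $\psi$-ample, a $\psi$-contracted curve would satisfy $-K_{X_k/C}\cdot B=-K_{X_k/Y}\cdot B>0$, contradicting the assumption. Hence $\psi(B)$ is a curve, and being vertical over $C$ it lies in a fibre $F\simeq\PP^{1}$ of the $\PP^{1}$-bundle $\tau\colon Y\to C$. Set $T:=\psi^{-1}(F)$, a $\varphi_k$-fibre containing $B$. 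When $\Delta\neq\emptyset$, the surface $S'$ of Lemma \ref{lemmapreparation} satisfies $-K_{X_k/C}|_{S'}$ nef, so $B$ can be assumed disjoint from $S'$, placing it in the $\PP^{1}$-fibration part of $T\to F$.

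The second step is a comparison with the start of the MMP. Let $\Gamma$ be a common resolution of $X_C\dashrightarrow X_k$ with projections $p\colon\Gamma\to X_C$ and $q\colon\Gamma\to X_k$. Iterating the discrepancy formulae \eqref{flipdiscrepancy} and \eqref{divisorialdiscrepancy} yields
\[
p^{*}(-K_{X_C/C})=q^{*}(-K_{X_k/C})-D
\]
with $D$ an effective $q$-exceptional $\Q$-divisor. Nefness of $-K_{X_C/C}$ then gives $-K_{X_k/C}\cdot B\geq 0$ unless the strict transform $B_\Gamma$ is contained in $D$; so we may assume $B\subset q(D)$, i.e.\ $B$ lies in the codimension-$\geq 2$ locus modified by the MMP.

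To exclude this last case, I would run an induction on the MMP steps $X_i\dashrightarrow X_{i+1}$, showing that nefness of $-K_{X_i/C}$ propagates to $-K_{X_{i+1}/C}$ by means of Lemma \ref{lemmanu2} applied to the strict transform $S_{i+1}\subset X_{i+1}$ of $T$. Part (iii) of that lemma provides the inductive hypothesis that $-K_{X_{i+1}/C}|_{S_{i+1}}$ is nef but not big and either numerically trivial or ample on the fibres of $S_{i+1}\to F$, while parts (i) and (ii) show that $S_{i+1}$ avoids flipping loci entirely and meets divisorial exceptional loci only in a way compatible with nefness. Propagating these constraints back to $X_C$ contradicts the assumed negative intersection.

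The main obstacle is the inductive bookkeeping in the last paragraph: matching the flip/divisorial dichotomy at each $\mu_i$ with the two cases of Lemma \ref{lemmapreparation} (discriminant $\Delta$ empty or not), and verifying that the hypotheses of Lemma \ref{lemmanu2} remain satisfied at every step. This case analysis, rather than any single hard inequality, is the technical heart of the proof.
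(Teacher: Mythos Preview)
Your plan has a genuine gap at its core: the surface you propose to track through the MMP is the wrong one, and the lemma you want to apply does not accept it.

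You take $T=\psi^{-1}(F)$, a full $\varphi_k$-fibre, and want to apply Lemma \ref{lemmanu2} to its strict transforms $S_{i+1}$. But the hypothesis of Lemma \ref{lemmanu2} is that the surface $S'\subset X_{i+1}$ \emph{surjects onto $C$}. Your $T$ maps to a point of $C$, so the lemma is simply not available. This is not a technicality: the proof of Lemma \ref{lemmanu2} invokes Lemma \ref{lemmanu1surfaces}, which needs a genuine fibration $S'\to C$ onto a curve, and it uses Corollary \ref{corollarynefoverC} to know that $-K_{X_i/C}|_S$ is nef over $C$ --- a vacuous statement for a vertical surface. Likewise, the crucial step in part (i) (that the flipping locus, being vertical over $C$, gives a \emph{vertical} divisor $E$ on $\Gamma_S$) loses its force when $S$ itself is vertical. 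So the induction you sketch cannot be run on $T$.

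In the case $\Delta\neq\emptyset$ the paper tracks instead the \emph{horizontal} surface $S'=\psi^{-1}(\Delta)$ back through the MMP. The contradiction is not that $B$ is disjoint from $S'$; quite the opposite. Since $\Delta\equiv -\lambda K_{Y/C}$ surjects onto $C$ while each $\psi(B_j)$ is a fibre of $\tau$, one has $\Delta\cdot\psi(B_j)>0$, so $S_k\cap B_j\neq\emptyset$. The MMP analysis via Lemma \ref{lemmanu2} then forces the $S_i$ to be disjoint from all flipping loci and from all vertical images $\mu_i(D_i)$; together with Lemma \ref{lemmaMMPbasic}(iii) this yields $S_k\cap B_j=\emptyset$, the desired contradiction. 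Your sentence ``$B$ can be assumed disjoint from $S'$'' inverts the logic.

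Finally, you do not address the case $\Delta=\emptyset$ at all, and it cannot be handled by the same mechanism: there is no horizontal surface on which $-K_{X_k/C}$ has low numerical dimension. The paper treats this case by a completely different argument --- Grothendieck--Riemann--Roch gives $c_1(\psi_*\omega_{X_k/C}^*)=-3K_{Y/C}$ and $c_2(\psi_*\omega_{X_k/C}^*)=\tfrac{1}{2}K_{X_k/C}^3$; a limit argument on pseudoeffective $1$-cycles shows $K_{X_k/C}^3\le 0$, whence $c_2=0$; and then a Bogomolov--Miyaoka-type analysis shows the rank-three bundle $\psi_*\omega_{X_k/C}^*$ is nef, forcing $-K_{X_k/C}$ nef. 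This second case is the harder one and your proposal does not touch it.
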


\begin{proof}
By Corollary \ref{corollarynefoverC} we know that $-K_{X_k/C}$ is nef in codimension one and nef over $C$.
By Lemma \ref{lemmapreparation} we know that
$Y \rightarrow C$ is a ruled surface such that $-K_{Y/C}$ is nef.
Let $\Delta \subset Y$ be the $1$-dimensional part of the $\psi$-singular locus. 

Denote by $\{ B_1, \ldots, B_m\} \subset X_k$
the finite (maybe empty) set of curves such that  $-K_{X_k/C} \cdot B_j<0$. Since $-K_{X_k/C}$ is nef over $C$ and $\psi$-ample,
we see that for all $j \in \{1, \ldots, m\}$, the curve $\psi(B_j)$ is a fibre of the ruling $Y \rightarrow C$.

{\em 1st case: $\Delta \neq \emptyset$.} Let $S' \subset X_k$ be the surface
constructed in Lemma \ref{lemmapreparation}.
We will describe the MMP $X \dashrightarrow X_k$
in a neighbourhood of $S'$.

We set $S_k:=S'$ and for 
every $i \in \{ 0, \ldots, k-1 \}$ we define inductively
$S_{i} \subset X_{i}$ as the strict transform of $S_{i+1} \subset X_{i+1}$.
Consider now the largest $m \in \{ 1, \ldots, k\}$ such that
the surface $S_{m+1}$ is not disjoint from the flipping locus of $\mu_m$ or, if $\mu_m$ is divisorial,
the image $\mu_m(D_m)$ of the exceptional divisor. Since $\mu_k \circ \ldots \mu_{m+1}$
is an isomorphism near $S_{m+1}$ we see that $S_{m+1} \simeq S'$ and
by Lemma \ref{lemmapreparation} the divisor $-K_{X_{m+1}/C}|_{S_{m+1}}$ is nef but not big. 
Moreover $-K_{X_{m+1}/C}|_{S_{m+1}}$ is ample
on the fibres of $S_{m+1} \rightarrow \Delta$. Thus we can apply Lemma \ref{lemmanu2} and see that
$\mu_m$ is divisorial, the curve $\mu_m(D_m)$ is contained in $S_{m+1}$ and surjects onto $C$.
Since $X_{m+1}$ has only finitely many singular points and $\mu_m(D_m)$ is a lci curve in its general point, we see
by \cite[Prop.0.6]{PS98} that $\mu_m$ is {\em generically} the blow-up of the curve $\mu_m(D_m)$.
In particular we have
$$
-K_{X_{m}/C} = - \mu_m^* K_{X_{m+1}/C} - D_m.
$$
Let $l \subset S_{m+1}$ be an irreducible component of a general fibre of $S_{m+1} \rightarrow \Delta$,
and let $l' \subset S_m$ be its strict transform. Since $\mu_m(D_m)$ intersects $l$, we have $D_m \cdot l' \geq 1$.
By Lemma \ref{lemmapreparation} we have 
$$
1 = - K_{X_{m+1}/C} \cdot l = - \mu_m^* K_{X_{m+1}/C} \cdot l', 
$$
so $-K_{X_{m}/C} \cdot l'=0$. By Lemma \ref{lemmanu2} the divisor $-K_{X_{m}/C}|_{S_m}$ is nef.
Since it is numerically trivial on the general fibre of $f: S_m \rightarrow \Delta$, we see that 
$-K_{X_{m}/C}|_{S_m}=f^* H$ with $H$ a nef $\Q$-Cartier divisor on $\Delta$. However by Lemma \ref{lemmanu1surfaces}
we have
$$
(-K_{X_{m}/C}|_{S_m}) \cdot \mu_m^* (-K_{X_{m+1}/C}|_{S_{m+1}})= 0.
$$
Since $-K_{X_{m+1}/C}|_{S_{m+1}}$ is ample on the fibres of $S_{m+1} \rightarrow \Delta$, we see that $H \equiv 0$,
so $-K_{X_{m}/C}|_{S_m} \equiv 0$. Thus we are in the first case of Lemma \ref{lemmanu2}: 
for every $i \in \{ 0, \ldots, m-1 \}$, the MMP
is disjoint from $S_i$.

We will now argue by contradiction and suppose that $-K_{X_k/C}$ is not nef. 
Then the surface $S_k$ meets the curves $B_j$ in finitely many points.
On the one hand we have just seen that the surfaces $S_i$ are disjoint from any flipping locus
of the MMP and if the contraction is divisorial and $S_i$ is not disjoint from $\mu_i(D_i)$, 
then $\mu_i(D_i)$ surjects onto $C$. 
On the other hand we know by Lemma \ref{lemmaMMPbasic}, (iii) that
$B_j$ is contained in a flipping locus or the image of an exceptional 
divisor. Thus we have $B_j \cap S_k = \emptyset$, a contradiction.

{\em 2nd case: $\Delta = \emptyset$.} 
By Lemma \ref{lemmapreparation} the variety $X_k$ is smooth and
$X_k \rightarrow Y$ is a $\PP^1$-bundle. Using the Grothendieck-Riemann-Roch formula \cite[App.A, Thm.5.3]{Har77}
we see that
$$
c_1(\psi_* (\omega^*_{X_k/Y}))=0, \ c_2(\psi_* (\omega^*_{X_k/Y}))=\frac{1}{2} K_{X_k/Y}^3.
$$
Since $\psi_* (\omega^*_{X_k/C}) \simeq \psi_* (\omega^*_{X_k/Y}) \otimes \omega^*_{Y/C}$ and
$K_{Y/C}^2=0$ we deduce that
$$
c_1(\psi_* (\omega^*_{X_k/C}))= - 3 K_{Y/C}, \ c_2(\psi_* (\omega^*_{X_k/C}))=\frac{1}{2} K_{X_k/C}^3.
$$
Let $A \subset Y$ be a general hyperplane section, then $\fibre{\psi}{A} \rightarrow A$ is a $\PP^1$-bundle
and $-K_{X_k/C}|_{\fibre{\psi}{A}}$ is nef, since $\fibre{\psi}{A} \cap B_j$ is a finite set for every $j$.
In particular the direct image $\psi_* (\omega^*_{X_k/C})|_A$ is nef. Thus $\psi_* (\omega^*_{X_k/C})$
is generically nef for any polarisation $A$ and by \cite[Thm.6.1]{Miy87} one has
$$
c_2(\psi_*( \omega^*_{X_k/C})) \geq 0.
$$
We claim that $K_{X_k/C}^3 \leq 0$, by what precedes this implies $c_2(\psi_*( \omega^*_{X_k/C})) = 0$.

{\em Proof of the claim.} Recall that $K_{X_k/C}^2$ is a pseudoeffective cycle and
$\psi_* (K_{X_k/C}^2) = - 4 K_{Y/C}$. Let $(K_n)_{n \in \N}$ be a sequence of effective $1$-cycles with rational coefficients 
converging in $N_1(X_k)$ to $K_{X_k/C}^2$. Then we can write
$$
K_n = \sum_{j=1}^m \eta_{j, n} B_j + R_n,
$$
where $R_n$ is an effective $1$-cycle with rational coefficients such that $-K_{X_k/C} \cdot R_n \geq 0$. 
If $H$ is an ample divisor on $X_k$, the degrees $H \cdot (\eta_{j, n} B_j)$ and $H \cdot R_n$ are bounded
for large $n \in \N$ by $(H \cdot K_{X_k/C}^2)+1$. Thus, up to replacing $K_n$ by some subsequence, we can suppose
that the sequences  $\eta_{j, n}$ and $R_n$ converge. Thus we have
$$
K_{X_k/C}^2 = \sum_{j=1}^m \eta_{j, \infty} B_j + R_\infty,
$$
where $R_\infty$ is a pseudoeffective cycle such that 
$-K_{X_k/C} \cdot R_\infty \geq 0$. Pushing down to $Y$ we have
$$
- 4 K_{Y/C} = \sum_{i=j}^m \eta_{j, \infty} \psi_*(B_j) + \psi_*(R_\infty).
$$
Recall now that $K_{Y/C}^2=0$ and $-K_{Y/C} \cdot \psi_*(B_j)>0$ for all $j$. Then the preceding equation
shows that $\eta_{j, \infty}=0$ for all $j$, hence we get $K_{X_k/C}^2 = R_\infty$ and
$$
- K_{X_k/C}^3 = -K_{X_k/C} \cdot R_\infty \geq 0.
$$
This proves the claim.

{\em Conclusion.}
We will now prove that $\psi_* (\omega^*_{X_k/C})$ is a nef vector bundle. Since the natural morphism
$$
\psi^* \psi_* (\omega^*_{X_k/C}) \rightarrow \omega^*_{X_k/C}
$$
is surjective, this proves that $-K_{X_k/C}$ is nef. If $\psi_* (\omega^*_{X_k/C})$ is stable for some
polarisation $A$ the property
$$
c_1^2(\psi_* (\omega^*_{X_k/C}))= - 3 K_{Y/C}^2=0, \ c_2(\psi_* (\omega^*_{X_k/C}))=0
$$
implies  that $\psi_* (\omega^*_{X_k/C})$ is projectively flat with nef determinant \cite[Cor.3]{BS94}, hence nef.
We already know that $V:=\psi_* (\omega^*_{X_k/C})$ is generically nef for any polarisation $A$ on $Y$. Thus
if $V \rightarrow Q$ is any torsion-free quotient sheaf, then $Q$ is generically nef for any polarisation $A$ on $Y$.
By Remark \ref{remarksurfaces} we then have
$$
\det Q = \frac{-m}{2} K_{Y/C} + n F
$$
with $m, n \in \N_0$. 
Suppose now that $V$ is not stable with respect to the polarisation $\frac{-1}{2} K_{Y/C} + \frac{1}{8} F$.
Then there exists a stable reflexive subsheaf $\sF \subset V$ such that the quotient $Q:=V/\sF$ is torsion-free and
the slope $\mu(Q)$ is less or equal than the slope $\mu(V)$.
An elementary computation shows that
$\det Q=\frac{-m}{2} K_{Y/C}$ with $m \leq 2 \rk Q$. In particular we have $\det \sF= \frac{-(6-m)}{2} K_{Y/C}$.
Since $c_2(Q) \geq 0$ by \cite[Thm.6.1]{Miy87} and $c_2(\sF) \geq 0$ by the Bogomolov-Miyaoka-Yau inequality
we see that $c_2(\sF) = 0$ and $c_2(Q) = 0$. In particular $\sF$ is projectively flat with nef determinant \cite[Cor.3]{BS94},
hence nef. The same holds for $Q$ if it is stable. If $Q$ is not stable we easily prove that it is an extension
of two line bundles $L_1$ and $L_2$ which are non-negative multiples of $\frac{-m}{2} K_{Y/C}$, in particular $Q$ is nef.
Thus $V$ is an extension of nef vector bundles, hence nef.
\end{proof}

\begin{remark*}
The proof of the second case $\Delta = 0$ is tedious and rather ad-hoc. If we could suppose the existence of
a curve $C_0 \subset Y$ such that $-K_{Y/C} \cdot C_0=0$ we could argue as in the first case.
Unfortunately the curve $C_0$ does not always exist, cf. Remark \ref{remarkmumford}.
\end{remark*}

\begin{proposition} \label{propositionMMPtotal}
In the situation of Setup \ref{setup}, suppose that $\dim X_C-\dim C=2$ and that the general $\varphi$-fibre
is rationally connected. 
Suppose also that there exists an effective divisor $A_0 \subset X_C$ such that
$A_0 \equiv -mK_{X_C/C}$ for some $m \in \N$.
Then every $\mu_i$ is a divisorial contraction onto some \'etale multisection,
and $-K_{X_i/C}$ is nef for all $i \in \{ 0, \ldots, k\}$. 
If $X_C$ is Gorenstein, the fibration $X_C \rightarrow C$ is locally trivial in the analytic topology.
\end{proposition}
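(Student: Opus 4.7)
The proof proceeds by descending induction on $i$. Base case: Proposition \ref{propositionMMPnef} already gives $-K_{X_k/C}$ nef, and Lemma \ref{lemmapreparation} describes the Mori fibration $\psi\colon X_k \to Y$ with $Y \to C$ a $\PP^1$-bundle. Inductive step: assuming $-K_{X_{i+1}/C}$ is nef, I will show that $\mu_i\colon X_i \dashrightarrow X_{i+1}$ is a divisorial contraction onto a curve étale over $C$, and that $-K_{X_i/C}$ is nef.

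The effective divisor $A_0$ is the key tool. A standard discrepancy calculation, using that $(\mu_j)_* D_j = 0$ for divisorial exceptional divisors and that flipping loci have codimension two, shows the strict transforms $A_i$ are effective and satisfy $A_i \equiv -m K_{X_i/C}$. For any horizontal irreducible component $S' \subset A_{i+1}$---which exists since $A_{i+1}$ restricted to a general fibre is a nonzero effective divisor---the restriction $-K_{X_{i+1}/C}|_{S'}$ is nef by the inductive hypothesis. To see it is not big, I claim that $-K_{X_{i+1}/C}$ is $\varphi$-big: the general fibre $F$ is a smooth rationally connected surface on which $-K_F$ is nef and $-mK_F$ is effective, and a classification argument (ruling out rational elliptic fibrations by means of $A_0$ and the Albanese-type geometry) identifies $F$ as a weak del Pezzo with $(-K_F)^2>0$. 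Then Lemma \ref{lemmanumericaldimensionprojective} yields $(-K_{X_{i+1}/C})^{\dim X_{i+1}}=0$, and expanding $(-K_{X_{i+1}/C})^2 \cdot A_{i+1}$ as a sum of non-negative terms $(-K_{X_{i+1}/C})^2 \cdot S'$ forces each to vanish.

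With $S'$ satisfying the hypotheses of Lemma \ref{lemmanu2}, part (i) rules out flips touching $S'$ and part (ii) constrains divisorial contractions to contract their exceptional divisor onto a curve inside $S'$ dominating $C$. Combining this with Lemma \ref{lemmaMMPbasic}(iii)---every $-K_{X_i/C}$-negative curve on $X_i$ lies in a flipping locus or divisorial exceptional image of some earlier step---and the elementary observation that any such curve must lie in $\supp A_i$ (since $A_i \cdot B < 0$ forces $B \subset \supp A_i$), we conclude that $-K_{X_i/C}$ is nef and $\mu_i$ is a divisorial contraction whose exceptional divisor $D_i$ contracts onto a curve surjecting onto $C$. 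The étaleness of $\mu_i(D_i)\to C$ follows from an adjunction/ramification argument patterned on the treatment of $\Delta$ in the proof of Lemma \ref{lemmapreparation}, using the ruled structure of $D_i$ over $\mu_i(D_i)$ and the vanishing of its self-intersection inside the horizontal surface.

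For the local triviality statement, under the Gorenstein hypothesis all conditions of Corollary \ref{corollaryMFS} are met: $X_C$ is $\Q$-factorial with terminal singularities (Setup \ref{setup}), $-K_{X_C/C}$ is nef (just established) and $\varphi$-big (by the weak del Pezzo analysis), and the general fibre is smooth. The main technical obstacle is the bigness step in paragraph two, namely forcing the general fibre $F$ to be a weak del Pezzo rather than a rational elliptic surface or similar nef-but-non-big example; this requires combining the effectivity of $A_0$ with the global geometry of the Albanese setup in which Proposition \ref{propositionMMPtotal} is ultimately applied, and is the central input that leverages the Gorenstein assumption.
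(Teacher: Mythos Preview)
Your proposal has a genuine gap that undermines both halves of the argument. You assert that the general fibre $F$ must be a weak del Pezzo surface with $K_F^2>0$, invoking ``a classification argument (ruling out rational elliptic fibrations by means of $A_0$ and the Albanese-type geometry)''. This is not a consequence of the hypotheses of the proposition: Setup \ref{setup} concerns an arbitrary fibration over a curve, with no Albanese structure, and the existence of $A_0 \equiv -mK_{X_C/C}$ does not force $K_F^2>0$. For instance, $F$ could be $\PP^2$ blown up in nine general points, which is rationally connected with $-K_F$ nef, effective, and $K_F^2=0$. The paper's proof explicitly separates into the two cases $K_F^2=0$ and $K_F^2>0$. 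In the first case one shows $K_{X_k/C}^2 \equiv 0$, so $-K_{X_k/C}$ is numerically trivial on every component of $A_k$; then Lemma \ref{lemmanu2}(ii) forces the MMP to be trivial. Your induction never sees this case, and your local-triviality argument via Corollary \ref{corollaryMFS} fails outright there since $-K_{X_C/C}$ is not $\varphi$-big.

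Even restricting to the case $K_F^2>0$, your invocation of Lemma \ref{lemmanu2} is incomplete. That lemma requires, for each horizontal component $S' \subset A_{i+1}$, that $-K_{X_{i+1}/C}|_{S'}$ be either numerically trivial or ample on the fibres of $S' \to C$; merely ``nef and not big'' is insufficient. The paper establishes this dichotomy by first showing (via Corollary \ref{corollaryMFS} and Proposition \ref{propositionkltdirectimage}) that $X_{i+1} \to C$ and $A_{i+1} \to C$ are locally trivial, so the restriction to each component is either relatively trivial or relatively ample, and then uses a further numerically-flat direct-image argument to upgrade ``relatively trivial'' to ``numerically trivial''. Your proposal skips this entirely. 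Finally, for local triviality when $K_F^2>0$, the paper does not simply apply Corollary \ref{corollaryMFS} to $X_C$; it uses the nef and $\varphi$-big divisor $-K_{X_0/C}-\mu_0^* K_{X_1/C}$ and a direct-image comparison with $(\varphi_1)_*(\omega_{X_1/C}^{\otimes -2m})$ to run the argument of Theorem \ref{theoremmain}, and handles the trivial-MMP case separately via a conic-bundle analysis requiring the Gorenstein hypothesis.
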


\begin{proof}
Note first that $-K_{X_k/C}$ is nef: if $\dim Y=2$ this was shown in Proposition \ref{propositionMMPnef},
if $\dim Y=1$ the Mori fibre space $\psi$ and the fibration $X_k \rightarrow C$ identify, 
so $-K_{X_k/C}$ is relatively ample and nef over $C$, hence nef.
Let $F$ be a general fibre of $X_k \rightarrow C$. 

{\em Step 1. Description of the MMP.}
We denote by $M \in \mbox{Pic}^0 C$ a divisor such that
$$
A_0 \in H^0(X_C, -mK_{X_C/C} + \varphi^* M).
$$
For every $i \in \{0, \ldots, k\}$ we denote by $M_i$ the pull-back of $M$ to $X_i$ via the natural map $X_i \rightarrow C$.
Setting inductively $A_{i+1}=(\mu_i)_* A_i$ for every $i \in \{0, \ldots, k-1\}$
we have
$$
A_i \in H^0(X_i, -mK_{X_i/C} + M_i) \qquad \forall \ i \in \{0, \ldots, k\}.
$$
The divisor $A_i$ being effective we see that $-K_{X_i/C}$ is nef if and only if 
its restriction to every irreducible component of $A_i$ is nef.
Note that if the contraction $\mu_i$ is divisorial with exceptional divisor $D_i$, we have
$$
H^0(X_{i+1}, (\mu_i)_*(\sO_{X_i}(-mK_{X_i/C} + M_i))) = H^0(X_{i+1}, (-mK_{X_{i+1}/C} + M_{i+1}) \otimes \sJ),
$$
where $\sJ$ is an ideal sheaf whose cosupport is $\mu_i(D_i)$. In particular $A_{i+1}$ contains $\mu_i(D_i)$.
Since $-K_{X_k/C}$ is nef, the contraction $\mu_k$ is divisorial.

{\em 1st case. Suppose that $K_{F}^2=0$.}  By Lemma \ref{lemmapreparation}
and \eqref{miyanishi} we have
$\psi_* (K_{X_k/C}^2) \equiv - m K_{Y/C}$
with $m \geq 0$. Restricting to the general fibre $F$ the condition $K_F^2=0$ implies that $m=0$.
Thus the pseudoeffective cycle $K_{X_k/C}^2$
is numerically equivalent to $\mu l$ where $l$ is a general $\psi$-fibre. 
Thus we see that $0=(-K_{X_k/C})^3 = 2 \mu$.
Hence we have $K_{X_k/C}^2 \equiv 0$, in particular the restriction of $-K_{X_k/C}$ to any component
of $A_k$ is numerically trivial. If the MMP $X \dashrightarrow X_k$ is not an isomorphism, then
$\mu_k(D_k)$ is not disjoint from $A_k$, a contradiction to 
Lemma \ref{lemmanu2}(ii).

{\em 2nd case. Suppose that $K_{F}^2>0$.}
In this case we can apply Corollary \ref{corollaryMFS} to see 
that $X_k \rightarrow C$ is locally trivial in the analytic topology, in particular $X_k$ is smooth. 

Suppose for the moment that $-K_{X_i/C}$ is nef and relatively big for some $i \in \{1, \ldots, k\}$. 
We claim that  the divisor $A_i$ is a union of irreducible components
$A_i = \sum b_{i,l} A_{i,l}$ such that for every $l$ the natural map $A_{i,l} \rightarrow C$ is surjective
and $-K_{X_i/C}|_{A_{i,l}}$ is either numerically trivial or nef and relatively ample, but not big.

{\em Proof of the claim.}
Since $-K_{X_i/C}$ is nef but not big, the restriction $-K_{X_i/C}|_{A_{i,l}}$ is not big.
Since $(\varphi_i)_*(\omega^{\otimes -m}_{X_i/C}) \otimes M$ is numerically flat we can see as in the proof
of Proposition \ref{propositionloctriv} that $A_i \rightarrow C$ is locally trivial. 
In particular all the irreducible components surject onto $C$ and
if $-K_{X_i/C}|_{A_{i,l}}$ is relatively big for the fibration $A_{i,l} \rightarrow C$, it is relatively ample.
Thus we are left to show that if $-K_{X_i/C}$ is numerically trivial on the fibres of $A_{i,l} \rightarrow C$,
then its restriction to $A_{i,l}$ is numerically trivial. Note that in this case  $A_{i,l}$
is contracted by the morphism to the relative anticanonical model 
$$
\nu_i: X_i \rightarrow X_i' \subset \PP((\varphi_i)_*(\omega^{\otimes -d}_{X_i/C}))=:\PP(V_i)
$$  
with $d \gg 0$,  and the image
of $\nu_i(A_{i,l})$ is an irreducible component of $(X_i')_{sing}$. Since $X_i' \rightarrow C$ is 
locally trivial, the curve $\nu_i(A_{i,l})$ is an \'etale multisection and a connected component of $(X_i')_{sing}$.
After \'etale base change we can suppose that $\nu_i(A_{i,l})$ is a section.
If $\sI_{X_i'}$ is the ideal of $X_i'$ in $\PP(V_i)$ the direct image
$(\varphi_i)_* (\sI_{X_i'} \otimes \sO_{\PP(V_i)}(e))$ is numerically flat for $e \gg 0$ (cf. the proof of Theorem \ref{theoremmain}),
so $(\varphi_i)_* (\sI_{(X_i')_{sing}} \otimes \sO_{\PP(V_i)}(e))$ is also numerically flat. 
In particular the section $\nu_i(A_{i,l})$ corresponds to a numerically trivial quotient of $V_i$, hence
$$
0 = \sO_{\PP(V_i)}(e) \cdot (X_i')_{sing} = - e d K_{X_i'/C} \cdot (X_i')_{sing}.
$$
Since $\nu_i$ is crepant this proves the claim.

We will now prove by descending induction that $-K_{X_i/C}$ is nef and relatively big for all $i \in \{1, \ldots, k\}$.  
This is clear for $i=k$, so suppose that it holds for $i+1$.
Since $-K_{X_{i+1}/C}$ is nef, the contraction $\mu_i$ is divisorial
with exceptional divisor $D_i$ and $\mu_i(D_i)$ is contained in $A_{i+1}$.
By the claim the irreducible components $A_{i+1,l}$ satisfy the conditions of Lemma
\ref{lemmanu2}. Thus $\mu_i(D_i)$ is a curve surjecting onto $C$
and the divisor $-K_{X_i/C}$ is nef on the strict transforms
of all the irreducible components $A_{i+1,l}$.
This already proves that  $-K_{X_{i}/C}|_{A_{i}}$ is nef, unless
$A_i$ has one irreducible component more than $A_{i+1}$, the exceptional divisor $D_i$. 
Clearly $-K_{X_{i}/C}|_{D_i}$ is relatively ample with respect to $D_i \rightarrow \mu_i(D_i)$.
Yet $\mu_i(D_i)$ surjects onto $C$, so the restriction $-K_{X_{i}/C}|_{D_i}$ is
nef over $\mu_i(D_i)$. This proves that $-K_{X_{i}/C}|_{D_i}$ is nef, hence $-K_{X_{i}/C}|_{A_{i}}$ is nef.
Since $-K_{X_{i}/C}$ is nef we know by \cite[Prop.4.11]{PS98} that $\mu_i$ 
is the blow-up along an \'etale (multi-)section.

{\em Step 2. $\varphi$ is locally trivial.} By what precedes the first step of MMP is a divisorial
contraction $\holom{\mu_0}{X_0}{X_1}$ contracting a divisor $D_0$ onto an \'etale 
multisection\footnote{The case of a trivial MMP can be excluded as follows: 
consider the Mori fibre space $\psi: X_C = X_k \rightarrow Y$.
Since $X_C$ is Gorenstein, the fibration $\psi$ is a conic bundle \cite[Thm.7]{Cut88}. 
The discriminant locus $\Delta$ is smooth
by Lemma \ref{lemmapreparation}, so all the fibres over $\Delta$ are reducible conics \cite[Prop.1.8.3)]{Sar82}.
Thus the associated two-to-one cover $\tilde \Delta \rightarrow \Delta$ is \'etale, hence $\tilde \Delta \rightarrow C$
is \'etale by Lemma \ref{lemmapreparation}. Arguing as \cite[Prop.0.4, Rem.0.5]{PS98} we see that 
$X_C \times_C \tilde \Delta \rightarrow \tilde \Delta$ admits a Mori contraction that blows down exactly one (-1)-curve
in every fibre.}.
In particular $-K_{X_1/C}$ is nef and $\varphi_1$-big where $\holom{\varphi_1}{X_1}{C}$ is the natural fibration.
By Corollary \ref{corollaryMFS} the variety $X_1$ is smooth, so $X_0$ is smooth.
Moreover $-K_{X_0/C} - \mu_0^* K_{X_1/C}$ is nef and $\varphi$-big,
hence for $m \in \N$ the direct image $\varphi_*(\omega_{X_0/C}^{\otimes -m} \otimes \mu_0^* \omega_{X_1/C}^{\otimes -m})$
is nef \cite{Kol86}. The inclusion $(\mu_0)_* (\omega_{X_0/C}^{\otimes -m}) \hookrightarrow \omega_{X_1/C}^{\otimes -m}$ 
yields an inclusion 
$$
\varphi_*(\omega_{X_0/C}^{\otimes -m} \otimes \mu_0^* \omega_{X_1/C}^{\otimes -m})
\hookrightarrow
(\varphi_1)_* (\omega_{X_1/C}^{\otimes -2m}).
$$
The sheaf $(\varphi_1)_* (\omega_{X_1/C}^{\otimes -2m})$ is numerically flat for all $m \gg 0$ by
Proposition \ref{propositionkltdirectimage}, so 
$\varphi_*(\omega_{X_0/C}^{\otimes -m} \otimes \mu_0^* \omega_{X_1/C}^{\otimes -m})$ is also numerically flat
for all $m \gg 0$.
By the relative base-point free theorem the natural map 
$$
\varphi^* \varphi_*(\omega_{X_0/C}^{\otimes -m} \otimes \mu_0^* \omega_{X_1/C}^{\otimes -m})
\rightarrow 
\omega_{X_0/C}^{\otimes -m} \otimes \mu_0^* \omega_{X_1/C}^{\otimes -m}
$$
is surjective for all $m \gg 0$, so we obtain a birational morphism
$\holom{\mu}{X_0}{X_0'}$ onto a normal projective variety $\varphi': X_0' \rightarrow C$
embedded in $\varphi': \PP(E_m) \rightarrow C$
where $E_m :=  \varphi_*(\omega_{X_0/C}^{\otimes -m} \otimes \mu_0^* \omega_{X_1/C}^{\otimes -m})$
for some fixed $m \gg 0$.  We can now argue as in the proof of Theorem \ref{theoremmain}:
denoting by $\sI_{X_0'} \subset \sO_{\PP(E_m)}$ the ideal sheaf of $X_0' \subset \PP(E_m)$,
we have for
every $d \gg 0$ an exact sequence
$$
0 \rightarrow (\varphi')_{*}(\sI_{X_0'}\otimes \mathcal{O}_{\mathbb{P}(E_{m})}(d)) 
\rightarrow S^d E_m \rightarrow \varphi_*(\omega_{X_0/C}^{\otimes -dm} \otimes \mu_0^* \omega_{X_1/C}^{\otimes -dm}) \rightarrow 0.
$$
Thus $(\varphi')_{*}(\sI_{X_0'}\otimes \mathcal{O}_{\mathbb{P}(E_{m})}(d)) $ 
is numerically flat, so $\varphi': X_0' \rightarrow C$ is locally trivial with fibre $F'$ by Proposition \ref{propositionloctriv}.

The Cartier divisor $-K_{X_0/C}$ is $\mu$-trivial, 
so we have $K_{X_0/C} = \mu^* K_{X_0'/C}$ \cite[Thm.3.24]{KM98}. Thus $X_0 \rightarrow X_0'$ is a crepant resolution of
the normal projective variety $X_0'$, 
and a smooth $\varphi$-fibre $F$ is the minimal resolution of the general $\varphi'$-fibre $F'$.
In particular $F$ is unique up to isomorphism, arguing exactly as in Step 2 of the proof of Theorem \ref{theoremmain}
we see that $X_C \rightarrow C$ is locally trivial with fibre $F$.
\end{proof}

\subsection{Main result} \label{subsectionmainresult}

In this section we will prove Theorem \ref{theoremmaintwo}. Proposition \ref{propositionMMPtotal}
obviously settles the main part, however we proved the statement
under a nonvanishing condition which is not satisfied in general (cf. Remark \ref{remarkmumford}).
We will now show that these properties hold if we start with a fibration onto a torus.

\begin{lemma} \label{lemmanonvanishing}
Let $X$ be a projective manifold such that $-K_X$ is nef.
Let $\pi: X \rightarrow A$ be the Albanese fibration, 
and suppose that $-K_F$ is nef and abundant\footnote{Cf. \cite{Fuj11} for the relevant definitions.} for the general $\pi$-fibre $F$.
Then there exists an effective divisor $A \subset X$ such that
$A \equiv -mK_{X}$ for some $m \in \N$.
\end{lemma}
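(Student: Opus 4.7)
The plan is to construct an effective divisor numerically equivalent to $-mK_X$ by producing a nonzero section of $\omega_X^{\otimes -m}\otimes \pi^*\alpha$ for some $m\in\N$ and some $\alpha\in\pic^0(A)$; since $\pi$ is the Albanese map, $\pi^*\alpha$ is numerically trivial on $X$, so the zero locus of such a section will be numerically equivalent to $-mK_X$.

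First, since $-K_F$ is nef and abundant on the smooth general $\pi$-fibre $F$, Kawamata's abundance theorem for divisors with $\nu=\kappa$ implies that $-K_F$ is semi-ample. In particular $h^0(F,\omega_F^{\otimes -m})>0$ for every sufficiently divisible $m$. Combined with the flatness and reducedness of the Albanese fibres established in \cite{LTZZ10}, the coherent sheaf $\sE_m:=\pi_*(\omega_X^{\otimes -m})$ is a nonzero sheaf on $A$ whose generic rank equals $h^0(F,\omega_F^{\otimes -m})$.

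Next I would pass to the relative Iitaka fibration of $-K_X$ over $A$ in order to reduce to a situation with relative bigness, where the positivity machinery of Section \ref{sectionpositivity} applies. Since $-K_F$ is semi-ample, after replacing $X$ by a smooth birational model we obtain a factorisation
\[
X\xrightarrow{\;g\;} Y\xrightarrow{\;\tau\;} A
\]
with $\tau$ a fibration onto the abelian variety $A$, such that $-K_X\sim_\Q g^*H+E$, where $H$ is a nef $\Q$-Cartier divisor on $Y$ which is $\tau$-big and $E$ is an effective $g$-exceptional divisor. The triple $(Y,\tau,H)$ now satisfies the hypotheses of the projective-case results in Section \ref{sectionpositivity}: Lemma \ref{lemmakltdirectimage} gives that $\tau_*(\sO_Y(mH))$ is a nef coherent sheaf on $A$, ample after twisting by any ample line bundle on $A$.

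Finally, a nef nonzero coherent sheaf on an abelian variety is a GV-sheaf, and every nonzero GV-sheaf $\sF$ on an abelian variety satisfies $h^0(A,\sF\otimes\alpha)>0$ for some $\alpha\in\pic^0(A)$ (by Hacon's generic vanishing theorem, or equivalently by the non-degeneracy of the Fourier--Mukai transform). Applied to $\sF=\tau_*(\sO_Y(mH))$, this yields a section whose pullback via $g$, combined with the effective exceptional divisor $E$, produces the desired effective divisor in the numerical class of $-mK_X$ on $X$.

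The main obstacle is the middle step: constructing the relative Iitaka fibration $g:X\to Y$ and carrying out the bookkeeping needed to ensure that the exceptional divisor $E$ can be absorbed so that the section lifted from $Y$ still yields an effective divisor on $X$ in the correct numerical class. This birational analysis is exactly what the abundance hypothesis on $-K_F$ makes tractable, since abundance guarantees that the Iitaka fibration is defined by an honest semi-ample linear system and not merely by a numerical one.
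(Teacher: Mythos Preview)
Your broad strategy matches the paper's: factor through a relative model $Y\to A$ on which the anticanonical class becomes relatively ample, prove positivity of the direct image on $A$, then extract a section after twisting by some $\alpha\in\pic^0(A)$. But the execution has two genuine gaps.

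First, the ``main obstacle'' you identify is entirely avoidable. The paper invokes the \emph{relative} version of Kawamata's theorem \cite[Thm.~1.1]{Fuj11}: since $-K_F$ is nef and abundant on the general fibre, the divisor $-K_X$ is $\pi$-semiample on all of $X$, not merely semiample fibre by fibre. Hence for $m\gg 0$ the system $|{-}mK_X|$ already defines a morphism $\psi\colon X\to Y$ over $A$ with $-K_X\sim_\Q\psi^*H$ and $H$ nef, $\tau$-ample. There is no birational modification and no exceptional divisor $E$. Your workaround --- replace $X$ by a smooth model and absorb $E$ --- is unnecessary and, as you concede, causes real trouble: after modification nefness of $-K_X$ is lost, so you have no reason for $H$ to be nef on $Y$, and Lemma~\ref{lemmakltdirectimage} does not apply.

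Second, even granting the factorisation, your appeal to Lemma~\ref{lemmakltdirectimage} is incomplete: that lemma requires $\tau$ flat and a klt pair $(Y,\Delta_Y)$ with $H\sim_\Q -(K_Y+\Delta_Y)$. The paper supplies both via Ambro's canonical bundle formula \cite{Amb05} and the equidimensionality of $\pi$ from \cite{LTZZ10}. It then applies Proposition~\ref{propositionkltdirectimage}, not just Lemma~\ref{lemmakltdirectimage}, to conclude that $\pi_*(\omega_X^{\otimes -m})$ is \emph{numerically flat}. The final step differs from yours: by \cite[Thm.~1.18]{DPS94} a numerically flat bundle contains a subbundle arising from a unitary representation of $\pi_1(A)$, and since $\pi_1(A)$ is abelian this splits as a direct sum of numerically trivial line bundles, immediately yielding the section. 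Your route via GV-sheaves could work once numerical flatness is established, but the bald assertion ``nef implies GV on an abelian variety'' is not what Hacon's theorem says and needs its own argument.
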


\begin{proof}
Since $-K_F$ is nef and abundant we know by the relative version of Kawamata's 
theorem \cite[Thm.1.1]{Fuj11}
that $-K_X$ is $\pi$-semiample,  so
for every sufficiently divisible $m \gg 0$ the natural map
$$
\pi^{*}\pi_{*}(\omega_X^{\otimes -m})\rightarrow \omega_X^{\otimes -m}
$$
is surjective. Thus $-mK_X$ induces a morphism
$\holom{\psi}{X}{Y}$
onto a normal projective variety $\holom{\tau}{Y}{A}$ 
such that $-K_X \sim_\Q \psi^* H$
with $H$ a nef and $\tau$-ample Cartier divisor. 
Since $\pi$ is equidimensional \cite{LTZZ10}, the fibration $\tau$ is equidimensional.
By \cite[Thm.0.2]{Amb05} there exists a boundary divisor $\Delta_Y$ on $Y$ such that the pair $(Y, \Delta_Y)$ is klt
and $H \sim_\Q -(K_Y+\Delta_Y)$. In particular the variety $Y$ is Cohen-Macaulay, so the equidimensional fibration
$\tau$ is flat. Thus we can apply Proposition \ref{propositionkltdirectimage} to see that for sufficiently divisible $m \gg 0$, 
the direct image sheaf
$$
\pi_* (\omega_X^{- \otimes m}) \simeq \tau_* (\sO_Y(-m(K_Y+\Delta_Y)))
$$
is a numerically flat vector bundle. By \cite[Thm.1.18]{DPS94} there exists a subbundle $F \subset \pi_* (\omega_X^{- \otimes m})$
such that $F$ is given by a unitary representation $\pi_1(A) \rightarrow U(\rk F)$. The group $\pi_1(A)$
is abelian, so the representation splits, i.e. $F$ is a direct sum of numerically trivial line bundles. In particular
there exists a $M \in \mbox{Pic}^0 A$ such that
$H^0(A, M \otimes F) \neq 0$.
\end{proof}

\begin{lemma} \label{lemmaGRR}
Let \holom{f}{M}{C} be a fibration from a normal $\Q$-factorial threefold with
at most Gorenstein terminal singularities onto a curve $C$ such that $-K_{M/C}$
is nef. Suppose that the general fibre $F$ is rationally connected and $K_F^2=0$.
\begin{enumerate}[(i)]
\item Then we have
$c_1(f_!(\omega^*_{M/C}))=0$.
\item If $h^0(F, -K_F)=1$, there exists an effective divisor $A \subset M$ 
such that $A \equiv -K_{M/C}$.
\end{enumerate}
\end{lemma}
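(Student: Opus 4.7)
The strategy is to compute $c_1(f_!(\omega^*_{M/C}))$ by combining Hirzebruch--Riemann--Roch on $M$ with Riemann--Roch on the base curve $C$, and then use $K_F^2=0$ to force the resulting quantity to vanish.

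For part (i), I would start from $\chi(M,\omega^*_{M/C}) = \chi(C, f_!(\omega^*_{M/C}))$ together with Riemann--Roch on $C$: since the rank of $f_!(\omega^*_{M/C})$ equals $\chi(F,-K_F) = 1 + K_F^2 = 1$ by Riemann--Roch on the smooth general fibre, this yields
$$c_1(f_!(\omega^*_{M/C})) = \chi(M,\omega^*_{M/C}) - (1-g(C)).$$
To compute $\chi(M,\omega^*_{M/C})$ I would apply Hirzebruch--Riemann--Roch on $M$ (justified in the Gorenstein terminal setting by pulling back to a resolution and using that $M$ has rational singularities), combined with: Koll\'ar's theorem $R^i f_*\sO_M = 0$ for $i\geq 1$ (from rational connectedness of the fibre), which gives $\chi(\sO_M) = 1-g(C)$ and hence $-K_M\cdot c_2(M) = 24(1-g(C))$; Noether's formula $c_2(F) = 12 - K_F^2 = 12$ on the fibre; and the decomposition $K_M = K_{M/C} + f^*K_C$. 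The cross terms $K_M^2\cdot f^*K_C$ and $K_{M/C}^2\cdot f^*K_C$ vanish thanks to $K_F^2=0$, while $c_2(M)\cdot f^*K_C = 24(g(C)-1)$ cancels $-K_M\cdot c_2(M) = 24(1-g(C))$ in the combination $(-K_{M/C})\cdot c_2(M)$; a direct calculation yields $\chi(M,\omega^*_{M/C}) = (1-g(C)) - \tfrac12 K_{M/C}^3$, hence $c_1(f_!(\omega^*_{M/C})) = -\tfrac12 K_{M/C}^3$. To conclude, I would show $K_{M/C}^3=0$ by a bigness-restriction argument: if $(-K_{M/C})^3>0$ then the nef divisor $-K_{M/C}$ would be big, so $-K_{M/C}\sim_{\Q}A+E$ with $A$ ample and $E$ effective; restricting to a general (smooth) fibre $F$, which is disjoint from the isolated singularities and satisfies $F|_F\equiv 0$, would yield $-K_F\sim_{\Q}A|_F+E|_F$ with $A|_F$ ample and $E|_F$ effective, making $-K_F$ big and contradicting $K_F^2=0$.

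For part (ii), the hypothesis $h^0(F,-K_F)=1$ combined with $\chi(F,-K_F)=1$ and Serre duality on $F$ (so that $h^2(F,-K_F)=h^0(F,2K_F)=0$ by rational connectedness) forces $h^1(F,-K_F)=0$. Hence $f_*\omega^*_{M/C}$ is a rank-one torsion-free sheaf on the smooth curve $C$, so a line bundle $\sL$, while $R^if_*\omega^*_{M/C}$ is torsion for $i=1,2$. Relative Serre duality identifies $R^2f_*\omega^*_{M/C}$ with the dual of $f_*\omega_{M/C}^{\otimes 2}$, which vanishes by Koll\'ar's theorem for rationally connected fibrations, so $R^2f_*\omega^*_{M/C}=0$. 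Combining with (i) and additivity of $c_1$ in $K$-theory,
$$0 \;=\; c_1(f_!\omega^*_{M/C}) \;=\; \deg\sL - \mathrm{length}(R^1f_*\omega^*_{M/C}),$$
whence $\deg\sL \geq 0$. Since any line bundle of non-negative degree on a smooth curve becomes effective after tensoring with a suitable $N\in\picd^0(C)$, we obtain a non-zero section $s\in H^0(M,\omega^*_{M/C}\otimes f^*N)$; its zero locus $A:=(s)_0$ is an effective divisor with $A\sim -K_{M/C}+f^*N\equiv -K_{M/C}$, as required.

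The main technical obstacle I anticipate is rigorously justifying the Hirzebruch--Riemann--Roch identity $\chi(M,\omega^*_{M/C}) = (1-g(C)) - \tfrac12 K_{M/C}^3$ for $M$ with isolated terminal Gorenstein singularities; this should reduce to careful bookkeeping on a resolution $\widetilde M \to M$, using that $M$ has rational singularities (so Euler characteristics are preserved) and that the exceptional locus is low-dimensional enough not to affect the relevant degree-three intersection numbers on $M$.
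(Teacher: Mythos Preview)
Your proof is correct and follows essentially the same route as the paper. Both arguments compute $c_1(f_!(\omega^*_{M/C}))$ via Riemann--Roch, reduce to showing $K_{M/C}^3=0$ (which you justify via Kodaira's lemma and the non-bigness of $-K_F$; the paper simply asserts it), and for part~(ii) both show $R^2f_*\omega^*_{M/C}=0$ so that $c_1(f_*\omega^*_{M/C})\geq 0$ --- you via relative Serre duality and the vanishing of $f_*\omega_{M/C}^{\otimes 2}$, the paper via the fibrewise vanishing $h^2(F_0,-K_{F_0})=h^0(F_0,2K_{F_0})=0$ for every fibre $F_0$.
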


\begin{remark} \label{remarkmumford}
Let $C$ be a curve of genus at least two, and let $U$ be a rank two bundle of degree $0$ on $C$
such that all the symmetric powers $S^m U$ are stable (such vector bundles have been constructed by Mumford).
Set $M:=\PP(U)$, then $-K_{M/C}$ is nef, but not numerically equivalent to any effective $\Q$-divisor.
\end{remark}

\begin{proof}
{\em Proof of (i)} By the Grothendieck-Riemann-Roch formula 
\cite[Thm.15.2]{Ful98}\footnote{The statement in \cite{Ful98} is only for a smooth total space,
but if $\holom{\mu}{M'}{M}$ is a resolution of singularities one checks easily that
$ch(-K_{M/C}) td(T_M)=ch(-\mu^* K_{M/C}) td(T_{M'})$. Thus the formula holds since
we can apply \cite[Thm.15.2]{Ful98} to $f \circ \mu$.}
we have
\[
td(T_C) ch(f_! (\omega_{M/C}^*)) = f_* (ch(-K_{M/C}) td(T_M)).
\]
We will prove that the degree $3$ component of $ch(-K_{M/C}) td(T_M)$ is equal to $1-g$,
which by the formula above implies the statement.

Since $K_{M/C}^3=0$ and $K_F^2=0$ we have
\begin{equation} \label{equationeasy}
K_{M/C}^2 \cdot K_M = 0, \qquad K_{M/C} \cdot K_M^2 = 0.
\end{equation}
The Chern character of $-K_{M/C}$ is 
$ch(-K_{M/C}) = 1 - K_{M/C} + \frac{1}{2} K_{M/C}^2$, and the Todd class is
\[
td(T_M) = 1- \frac{K_M}{2} + \frac{K_M^2+c_2(M)}{12} + \chi(M, \sO_M).
\]
Thus the degree $3$ components of $ch(-K_{M/C}) td(T_M)$ are given by
\begin{equation} \label{equationeasy2}
\chi(M, \sO_M) - K_{M/C} \cdot \frac{K_M^2+c_2(M)}{12} + \frac{1}{4} K_{M/C}^2 \cdot K_M.
\end{equation}
Since we have $\chi(M, \sO_M)=-\frac{K_M c_2(M)}{24}$ and $f^* K_C \cdot c_2(M) =
(2g-2) c_2(F)$ and $c_2(F)=12$ we obtain
$
- K_{M/C} \cdot \frac{c_2(M)}{12} = 2 \chi(M, \sO_M) + 2g-2$. 
Using \eqref{equationeasy} the formula \eqref{equationeasy2} simplifies to
$3 \chi(M, \sO_M) + 2g-2$.
The general $f$-fibre is rationally connected, so we have 
$\chi(M, \sO_M)=\chi(C, \sO_C)=1-g$.

{\em Proof of (ii)} Note that for a general fibre $F$ we have 
$h^1(F, -K_F)=h^2(F, -K_F)=0$, so 
$R^j f_* (\omega_{M/C}^*)$ is a torsion sheaf for $j \geq 1$. If $F_0$ is an arbitrary fibre, then by
Serre duality $h^2(F_0, -K_{F_0})=h^0(F_0, 2 K_{F_0})=0$ since $K_{F_0}$ is by hypothesis antinef
and not trivial.
Thus we have $R^2 f_* (\omega_{M/C}^*)=0$ and
$$
f_! (\omega_{M/C}^*) = f_* (\omega_{M/C}^*) - R^1 f_* (\omega_{M/C}^*).
$$
Since $R^1 f_* (\omega_{M/C}^*)$ is a torsion sheaf, statement $(i)$ implies that
$c_1 (f_* (\omega_{M/C}^*)) \geq 0$.
Thus $f_* (\omega^*_{M/C})$ is a line bundle of non-negative degree, so there exists a
numerically trivial line bundle $L$ on $C$ such that
$H^0(C, f_* (\omega^*_{M/C}) \otimes L) \neq 0$.
\end{proof}

\begin{proposition} \label{propositioneasy}
Let $X$ be a projective manifold such that  $-K_X$ is nef, and 
let \holom{\pi}{X}{T} be the Albanese map. Suppose that $\dim T=\dim X-2$ and the general
$\pi$-fibre $F$ is uniruled but not rationally connected. Then there exists a finite \'etale cover $X' \rightarrow X$
such that $q(X')=\dim X-1$. Moreover the fibration $\pi$ is smooth.
\end{proposition}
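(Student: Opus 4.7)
The plan is to reduce to Corollary \ref{corollarymain} by producing a finite étale cover $X' \to X$ with $q(X') = \dim X - 1$. Since $-K_F$ is nef and the general $\pi$-fibre $F$ is uniruled but not rationally connected, the minimal model of $F$ is a geometrically ruled surface $\PP(V) \to C_F$ over a smooth curve of genus $g := g(C_F) \geq 1$, and the MRC fibration $F \dashrightarrow C_F$ coincides with the ruling. Globalising this picture, after a birational modification $\tilde X \to X$ the relative MRC fibration over $T$ yields a tower of morphisms $\tilde X \to Z \xrightarrow{\psi} T$ in which the general fibre of $\tilde X \to Z$ is $\PP^1$ and the general $\psi$-fibre is a smooth curve of genus $g$.

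The crux is to prove $g = 1$. One arranges $Z$ and $\psi$ so that the relative Picard scheme $\mathrm{Pic}^0(Z/T) \to T$ is a smooth abelian scheme of relative dimension $g$; the equidimensionality and reducedness of $\pi$ established in \cite{LTZZ10} are the key inputs here. A rigidity argument for abelian schemes over an abelian variety, obtained by trivialising the monodromy on $R^1 \psi_* \Z$ via a suitable finite étale cover $T' \to T$, yields an isomorphism $\mathrm{Pic}^0(Z/T) \times_T T' \simeq T' \times B$ for some abelian variety $B$ of dimension $g$. Composing with the relative Abel--Jacobi map produces a morphism $X' := \tilde X \times_T T' \to T' \times B$ whose second projection is non-constant on the $\psi$-fibres, whence
\[
q(X') \;\geq\; \dim T + g.
\]
On the other hand $X'$ is étale over $X$ and thus uniruled, and any uniruled smooth projective variety $Y$ satisfies $q(Y) \leq \dim Y - 1$: otherwise the pullback of a generator of $H^0(\mathrm{Alb}(Y), \omega_{\mathrm{Alb}(Y)})$ would exhibit $K_Y$ as pseudoeffective. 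Combining the two inequalities gives $\dim T + g \leq \dim X - 1 = \dim T + 1$, so $g = 1$.

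With $g = 1$ the bound above becomes $q(X') = \dim X - 1$. Since $-K_{X'}$ is nef (étale pullback preserves nefness), Corollary \ref{corollarymain} applies to $X'$: its Albanese map is locally trivial in the analytic topology, in particular smooth. As $X' \to X$ is étale and the induced morphism $\mathrm{Alb}(X') \to T$ is surjective, smoothness of the Albanese map of $X'$ descends by étale descent to smoothness of $\pi : X \to T$.

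The main obstacle is the globalisation performed in the first step together with the set-up of the relative Jacobian in the second: one must realise $\tilde X \to Z$ as an honest morphism with $Z$ normal projective and $\psi$ equidimensional and smooth enough for $\mathrm{Pic}^0(Z/T)$ to be an abelian scheme over all of $T$. This step relies essentially on the equidimensionality and reducedness of the $\pi$-fibres from \cite{LTZZ10}, combined with a compatible resolution of indeterminacy of the relative MRC; once these are in place, the rest of the argument is largely formal.
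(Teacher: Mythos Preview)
Your overall strategy---globalise the MRC fibration to a tower $\tilde X \to Z \xrightarrow{\psi} T$, show the family of Jacobians is isotrivial, and bound $q(X')$ from both sides to force $g=1$---is different from the paper's and is attractive. However, the argument has a genuine gap at exactly the point you flag as ``the main obstacle'': you never establish that $\psi:Z\to T$ can be arranged so that $\mathrm{Pic}^0(Z/T)\to T$ is an abelian scheme over \emph{all} of $T$. The input from \cite{LTZZ10} controls the fibres of $\pi:X\to T$, not those of $\psi$; after resolving the indeterminacy of the relative MRC there is no reason for $Z\to T$ to be equidimensional, let alone smooth, over the bad locus. Without this, the rigidity step (constancy of the period map, trivialisation of the abelian scheme) is only available over an open subset of $T$, and you cannot conclude $q(X')\geq \dim T+g$. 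Note also a small slip: you write $X':=\tilde X\times_T T'$ and call it \'etale over $X$; it is only birational to the \'etale cover $X\times_T T'$, though this is harmless since $q$ is a birational invariant.

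The paper sidesteps this difficulty entirely. Instead of trying to control $Z\to T$ directly, it takes a smooth model $Y$ of the MRC base and invokes three black boxes: Zhang's theorem \cite{Zha05} gives that the positive part of $K_Y$ vanishes; Druel \cite{Dru11} then provides a good minimal model $Y'/T$; and Kawamata \cite{Kaw85} shows that after a finite \'etale base change $\tilde T\to T$ the variety $\tilde T\times_T Y'$ is a torus. This last result is precisely the statement that your rigidity argument is trying to reprove by hand in the one-dimensional-fibre case, and it already contains the hard work of controlling the fibration over the bad locus. Once the \'etale cover with $q=\dim X-1$ is in hand, your deduction of the smoothness of $\pi$ via \'etale descent from Corollary~\ref{corollarymain} is correct and in fact slightly cleaner than the paper's argument (which passes through an explicit Mori contraction $X\to Y$ and Beauville--Bogomolov). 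If you want to salvage your approach, you would need an independent argument that the relative Jacobian extends to an abelian scheme over $T$; short of reproving Kawamata's theorem, this seems to require exactly the kind of MMP input the paper uses.
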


\begin{proof}
Let $X \dashrightarrow Y$ be a model of the MRC-fibration \cite{Deb01} such that $Y$ is smooth. Then $Y$ is not uniruled,
and we denote by $K_Y=P+N$ the divisorial Zariski decomposition. By \cite[Main Thm.]{Zha05} the positive part $P$ is 
zero\footnote{The statement in \cite[Main Thm.]{Zha05} is only $\kappa(Y)=0$, but the proof consists in showing that $P=0$.}.
By \cite[Cor.3.4]{Dru11} the variety $Y/T$ has a good minimal model $Y'/T$.
By \cite[Prop.8.3]{Kaw85} there exists a finite \'etale cover $\tilde T \rightarrow T$ such that $\tilde T \times_T Y'$ is a torus.
Since the irregularity is invariant under the MMP, we see that $q(\tilde T \times_T Y)=\dim X-1$,
thus $q(\tilde T \times_T X)=\dim X-1$. 
This proves the first statement.  

Let now $X' \rightarrow X$ be an \'etale cover such that $q(X')=\dim X-1$, and let $X' \rightarrow T'$ be the Albanese map.
By Corollary \ref{corollarymain} we know that $X' \rightarrow T'$ is a $\PP^1$-bundle. In particular the reduction of every $\pi$-fibre
is a $\PP^1$-bundle $F_0$ over an elliptic curve $E$. Let $\holom{\psi}{X}{Y}$ be a Mori contraction over $T$,
then $\psi$ is a $\PP^1$-bundle, and the (reductions of) fibres of $\tau: Y \rightarrow T$ are elliptic curves.
Thus we have $K_Y \equiv 0$, by the Beauville-Bogomolov decomposition the fibration $\tau$ is smooth.
Hence $\pi= \tau \circ \psi$ is smooth. 
\end{proof}

\begin{remark} \label{remarkeasy}
Proposition \ref{propositioneasy} also holds if $X$ is a compact K\"ahler threefold: using \cite{Pau12} we see that
the base $Y$ of the MRC fibration has $\kappa(Y)=0$. Since $Y$ is a surface we can run the MMP, in the surface case Kawamata's result \cite{Kaw85} follows from the Kodaira-Enriques classification.
\end{remark}

\begin{proof}[Proof of Theorem \ref{theoremmaintwo}]
Let $F$ be a general $\pi$-fibre. Using Beauville-Bogomolov we easily exclude the case where $F$
is not uniruled. If $F$ is uniruled but not rationally connected we conclude by Proposition \ref{propositioneasy}. Suppose now that $F$ is rationally connected. 
If $K_F^2>0$ we conclude by Theorem \ref{theoremmain}, so suppose $K_F^2=0$.

We fix an arbitrary $t \in T$ 
and $C \subset T$ a general smooth curve such that $t \in C$.
By Corollary \ref{corollaryreductioncurve} the preimage
$\fibre{\pi}{C}$ is a normal variety with at most canonical singularities. The divisor 
$-K_{\fibre{\pi}{C}/C}$ is Cartier and nef, so if $X_C \rightarrow \fibre{\pi}{C}$ is a terminal $\Q$-factorial model 
\cite[Thm.6.23, Thm.6.25]{KM98} the  divisor 
$-K_{X_C/C}$ is Cartier and nef. By Proposition \ref{propositionMMPtotal} the fibration $X_C \rightarrow C$ is locally trivial if we prove
that there exists an effective $\Q$-divisor $A_0$ such that $A_0 \equiv -mK_{X_C/C}$ with $m \in \N$.
If $-K_F$ is not abundant this holds by Lemma \ref{lemmaGRR},b).
If $-K_F$ is nef and abundant we know by Lemma \ref{lemmanonvanishing} that there exists an effective divisor $A$ on $X$
such that $A \equiv -mK_X$. Since $C \subset T$ is general, the restriction $A|_{\fibre{\pi}{C}}$ is an effective divisor 
that is numerically equivalent to $-mK_{\fibre{\pi}{C}/C}$. The pull-back of this divisor to $X_C$ then gives $A_0$.

Thus we know that $X_C \rightarrow C$ is locally trivial. In particular any curve in a fibre of $X_C \rightarrow C$ deforms into a general fibre,
hence $X_C \rightarrow \fibre{\pi}{C}$ is an isomorphism.
Thus $X_C \simeq \fibre{\pi}{C} \rightarrow C$ is locally trivial.
\end{proof}

The proof of Theorem \ref{theoremmaintwo} would be much simpler if we could classify
manifolds such that $-K_X$ is nef but not semiample. The following example shows that this is non-trivial
even for threefolds, there by correcting \cite[p.498]{PS98} and \cite[p.600]{Pet12}.

\begin{example}
Let $C$ be an elliptic curve, and let $L_0 \in \mbox{Pic}^0 C$ be a line bundle of degree $0$ that is not torsion.
Set $L_1 := L_2 := \sO_C$ and $L_3 := L_0^{\otimes -2}$. Then $V:= \oplus_{i=0}^3 L_i$ is a numerically flat
vector bundle of rank four, and we denote by $\holom{\psi}{\PP(V)}{C}$ the projectivisation. The vector bundle $S^3 V$ contains a subvector bundle 
$$
(L_0^{\otimes 2} \otimes L_3) \oplus L_1^{\otimes 3} \oplus L_2^{\otimes 3} \simeq \sO_C^{\oplus 3},
$$ 
so $\sO_{\PP(V)}(3)$ has global sections corresponding fibrewise to the degree three monomials
$
X_0^2 X_3, \ X_1^3, \ X_2^3$.
The polynomial $X_0^2 X_3 + X_1^3 + X_2^3$ defines a cubic surface in $\PP^3$ that is normal and has 
a unique singular point in $(0:0:0:1)$, this point is of type $D_4$ \cite[Case C]{BW79}.
Thus if we denote by $X \subset \PP(V)$ the hypersurface defined by the global section
of  $\sO_{\PP(V)}(3)$ corresponding to this polynomial, we see that $X$ is normal with at most canonical singularities.
Moreover we have
$$
\omega_X^* \simeq (\psi^* L_0 \otimes \sO_{\PP(V)}(1))|_X,
$$
so $-K_X$ is nef. The singular locus of $X$ is the curve $C_0$ defined fibrewise by $X_0=X_1=X_2=0$, so it corresponds to the quotient $V \rightarrow L_3.$
Since $L_3 = L_0^{\otimes -2}$ we see that
$\omega^*_X|_{C_0} \simeq L_0^*.$ 
The line bundle $L_0$ is not torsion, so we obtain that $C_0 \subset \Bs |-mK_X|$ for all $m \in \N$. In particular $-K_X$ is not semiample.

Let now $X' \rightarrow X$ be a terminal model obtained by taking fibrewise the minimal resolution, then
$X'$ is smooth and $-K_{X'}$ is nef and not semiample. One checks easily that $X'$ is not a product, even after finite \'etale cover.
\end{example}

\begin{proof}[Proof of Corollary \ref{corollarykaehler}]
If $X$ is projective we conclude by Corollary \ref{corollarymain} and Theorem \ref{theoremmaintwo}.
Thus we are left to deal with the case where $X$ is not projective and $q(X)=1$. Then the general fibre $F$ of
$\holom{\pi}{X}{T}$ is not rationally connected, since otherwise $H^2(X, \sO_X)=0$. If $F$ is uniruled we apply 
Remark \ref{remarkeasy}. If $F$ is not uniruled, the canonical bundle $K_X$ is pseudoeffective \cite{Bru06}.
Thus $K_X \equiv 0$ and we conclude by Beauville-Bogomolov.
\end{proof}

\begin{appendix}

\section{A Hovanskii-Teissier inequality} \label{appendixinequality}

For the convenience of reader, we give the proof of the Hovanskii-Teissier concavity inequality for arbitrary compact K\"ahler manifolds,
which was first proved in \cite{Gro}. The proof here is a direct consequence of \cite[Thm A, C]{DN06}.

\begin{proposition} \label{propositionHT}
Let $(X,\omega_{X})$ be a compact K\"ahler manifold of dimension $n$, 
and let $\alpha$, $\beta$ be two nef classes. For every $i,j, k, s \in \N$ we
have
\begin{equation} \label{equationseven}
\int_{X}(\alpha^{i}\wedge\beta^{j}\wedge\omega_{X}^{n-i-j})
\end{equation}
$$\geq 
(\int_{X}\alpha^{i-k}\wedge\beta^{j+k}\wedge\omega_{X}^{n-i-j})^{\frac{s}{k+s}}
\cdot(\int_{X}\alpha^{i+s}\wedge\beta^{j-s}\wedge\omega_{X}^{n-i-j})^{\frac{k}{k+s}}.
$$
\end{proposition}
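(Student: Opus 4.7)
The plan is to interpret \eqref{equationseven} as log-concavity of the one-parameter sequence
$$a_p := \int_X \alpha^p \wedge \beta^{m-p} \wedge \omega_X^{n-m}, \qquad 0 \leq p \leq m,$$
where $m := i+j$. Taking logarithms and using the identity $i = \frac{s}{k+s}(i-k)+\frac{k}{k+s}(i+s)$, the asserted inequality is equivalent to $(k+s)\log a_i \geq s\log a_{i-k}+k\log a_{i+s}$, i.e. concavity of $p\mapsto \log a_p$ at $i$ with respect to the two neighbours $i-k$ and $i+s$.

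A routine discrete argument reduces the general statement to the adjacent case
\begin{equation} \label{adjacent}
a_p^{2} \geq a_{p-1}\cdot a_{p+1}, \qquad 1 \leq p \leq m-1.
\end{equation}
Indeed \eqref{adjacent} says exactly that the sequence of first differences $\Delta_p := \log a_{p+1}-\log a_p$ is non-increasing in $p$, so each of $\Delta_{i-k},\dots,\Delta_{i-1}$ dominates each of $\Delta_i,\dots,\Delta_{i+s-1}$; summing gives $\frac{1}{k}(\log a_i-\log a_{i-k})\geq \frac{1}{s}(\log a_{i+s}-\log a_i)$, which rearranges to the desired concavity at $i$.

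For \eqref{adjacent} I would first approximate $\alpha$ and $\beta$ by the K\"ahler classes $\alpha + \epsilon \omega_X$ and $\beta + \epsilon \omega_X$ and let $\epsilon \to 0$; since all the intersection numbers depend continuously on the classes, it suffices to prove \eqref{adjacent} when $\alpha$ and $\beta$ are K\"ahler. In that case I would apply the mixed Hodge-Riemann bilinear relations of \cite[Thm A, Thm C]{DN06} to the $(n-2)$-tuple of K\"ahler classes
$$\bigl(\underbrace{\alpha,\dots,\alpha}_{p-1},\ \underbrace{\beta,\dots,\beta}_{m-p-1},\ \underbrace{\omega_X,\dots,\omega_X}_{n-m}\bigr).$$
The conclusion is that the symmetric bilinear form $(\eta,\xi)\mapsto \int_X \eta \wedge \xi \wedge \alpha^{p-1}\wedge \beta^{m-p-1}\wedge \omega_X^{n-m}$ on $H^{1,1}(X,\R)$ satisfies a Hodge-index type inequality, and specialising to $(\eta,\xi)=(\alpha,\beta)$ gives \eqref{adjacent} directly.

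The main obstacle is really absorbed by the appeal to \cite{DN06}: once that mixed Hodge-Riemann theorem is granted, the packaging above is standard. The one subtlety worth flagging is that the Hodge-index conclusion formally requires the self-intersection $\int_X \alpha^{p+1}\wedge \beta^{m-p-1}\wedge \omega_X^{n-m}$ to be strictly positive, but this is automatic for K\"ahler $\alpha$, and any vanishing encountered in the nef limit only makes both sides of \eqref{adjacent} vanish simultaneously, so the limit inequality persists.
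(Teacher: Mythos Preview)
Your proposal is correct and follows essentially the same route as the paper: both reduce to the adjacent log-concavity inequality $a_p^2 \geq a_{p-1}a_{p+1}$, obtain it from the signature $(1,h^{1,1}-1)$ statement of \cite[Thm.~A]{DN06} applied to a bilinear form of the shape $\int_X \eta\wedge\xi\wedge\alpha^{p-1}\wedge\beta^{m-p-1}\wedge\omega_X^{n-m}$, and handle the passage from K\"ahler to nef by a limiting argument. The only cosmetic difference is that the paper states the Hodge-index inequality for an arbitrary tuple of K\"ahler classes and then lets them tend to $\alpha,\beta$, whereas you perturb $\alpha,\beta$ first; one small point to tidy is that your discrete log-concavity reduction should be carried out \emph{before} the limit (while all $a_p>0$), since otherwise $\log a_p$ may be undefined.
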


\begin{proof}
Let $\omega_{1},\cdots, \omega_{n-2}$ be $n-2$ arbitrary K\"ahler classes. 
Thanks to \cite[Thm.A]{DN06}, 
the bilinear form on $H^{1,1}(X)$
$$Q([\lambda], [\mu])=\int_{X}\lambda\wedge\mu\wedge\omega_{1}\wedge\cdots\wedge\omega_{n-2}
\qquad\lambda, \mu\in H^{1,1}(X)$$
is of signature $(1, h^{1,1}-1)$.
Since $\alpha$, $\beta$ are nef classes,
the function $f(t)=Q (\alpha+t\beta, \alpha+t\beta)$ is indefinite on $\mathbb{R}$ 
if and only if $\alpha$ and $\beta$ are linearly independent.
Therefore 
\begin{equation}\label{equationaddapen}
\int_{X}(\alpha\wedge\beta\wedge\omega_{1}\wedge\cdots\wedge\omega_{n-2})\geq 
(\int_{X}\alpha^{2}\wedge\omega_{1}\wedge\cdots\wedge\omega_{n-2})^{\frac{1}{2}}
\cdot(\int_{X}\beta^{2}\wedge\omega_{1}\wedge\cdots\wedge\omega_{n-2})^{\frac{1}{2}}.
\end{equation}

If we let $\omega_{1}, \cdots ,\omega_{i-1}$ tend to $\alpha$,
let $\omega_{i},\cdots,\omega_{i+j-2}$ tend to $\beta$
and take $\omega_{i+j-1}=\cdots=\omega_{n-2}=\omega_{X}$ in \eqref{equationaddapen},
we have
$$\int_{X}(\alpha^{i}\wedge\beta^{j}\wedge\omega_{X}^{n-i-j})\geq 
(\int_{X}\alpha^{i-1}\wedge\beta^{j+1}\wedge\omega_{X}^{n-i-j})^{\frac{1}{2}}
\cdot(\int_{X}\alpha^{i+1}\wedge\beta^{j-1}\wedge\omega_{X}^{n-i-j})^{\frac{1}{2}} .$$
Then \eqref{equationseven} is an easy consequence of the above inequality.
\end{proof}

\begin{remark}\label{corHT}
It is easy to see that the equality holds in \eqref{equationaddapen} if and only if $\alpha$ and $\beta$ are colinear. 
\end{remark}

\end{appendix}

\def\cprime{$'$}


\begin{thebibliography}{BCHM10}

\bibitem[AD11]{AD11}
Carolina Araujo and St{\'e}phane Druel.
\newblock On {F}ano foliations.
\newblock {\em To appear in Advances in Mathematics}, 2011.

\bibitem[Amb05]{Amb05}
Florin Ambro.
\newblock The moduli {$b$}-divisor of an lc-trivial fibration.
\newblock {\em Compos. Math.}, 141(2):385--403, 2005.

\bibitem[Anc87]{Anc87}
Vincenzo Ancona.
\newblock Vanishing and nonvanishing theorems for numerically effective line
  bundles on complex spaces.
\newblock {\em Ann. Mat. Pura Appl. (4)}, 149:153--164, 1987.

\bibitem[ARM12]{AR12}
Carolina Araujo and Jos{\'e}~J. Ram{\'o}n-Mar{\'i}.
\newblock Flat deformations of {$\mathbb P^n$}.
\newblock {\em arXiv preprint}, 1212.3593, 2012.

\bibitem[BCHM10]{BCHM10}
Caucher Birkar, Paolo Cascini, Christopher~D. Hacon, and James McKernan.
\newblock Existence of minimal models for varieties of log general type.
\newblock {\em J. Amer. Math. Soc.}, 23(2):405--468, 2010.

\bibitem[BL04]{BL04}
Christina Birkenhake and Herbert Lange.
\newblock {\em Complex abelian varieties}, volume 302 of {\em Grundlehren der
  Mathematischen Wissenschaften}.
\newblock Springer-Verlag, Berlin, second edition, 2004.

\bibitem[Bou04]{Bou04}
S{\'e}bastien Boucksom.
\newblock Divisorial {Z}ariski decompositions on compact complex manifolds.
\newblock {\em Ann. Sci. \'Ecole Norm. Sup. (4)}, 37(1):45--76, 2004.

\bibitem[BP08]{BP08}
Bo~Berndtsson and Mihai P{\u{a}}un.
\newblock Bergman kernels and the pseudoeffectivity of relative canonical
  bundles.
\newblock {\em Duke Math. J.}, 145(2):341--378, 2008.

\bibitem[Bru06]{Bru06}
Marco Brunella.
\newblock A positivity property for foliations on compact {K}\"ahler manifolds.
\newblock {\em Internat. J. Math.}, 17(1):35--43, 2006.

\bibitem[BS94]{BS94}
Shigetoshi Bando and Yum-Tong Siu.
\newblock Stable sheaves and {E}instein-{H}ermitian metrics.
\newblock In {\em Geometry and analysis on complex manifolds}, pages 39--50.
  World Sci. Publ., River Edge, NJ, 1994.

\bibitem[BW79]{BW79}
J.~W. Bruce and C.~T.~C. Wall.
\newblock On the classification of cubic surfaces.
\newblock {\em J. London Math. Soc. (2)}, 19(2):245--256, 1979.

\bibitem[Cam04]{Cam04}
Fr{\'e}d{\'e}ric Campana.
\newblock Orbifolds, special varieties and classification theory.
\newblock {\em Ann. Inst. Fourier (Grenoble)}, 54(3):499--630, 2004.

\bibitem[Cao12]{Cao12}
Junyan Cao.
\newblock Deformation of {K}{\"a}hler manifolds.
\newblock {\em arXiv preprint}, 1211.2058, 2012.

\bibitem[Cao13]{Cao13}
Junyan Cao.
\newblock A remark on compact {K}{\"a}hler manifolds with nef anticanonical
  bundles and its applications.
\newblock {\em arXiv preprint}, 2013.

\bibitem[CDP12]{CDP12}
F.~{Campana}, J.-P. {Demailly}, and T.~{Peternell}.
\newblock {Rationally connected manifolds and semipositivity of the Ricci
  curvature}.
\newblock {\em arXiv preprint}, 1210.2092, 2012.

\bibitem[CP91]{CP91}
Fr{\'e}d{\'e}ric Campana and Thomas Peternell.
\newblock Projective manifolds whose tangent bundles are numerically effective.
\newblock {\em Math. Ann.}, 289(1):169--187, 1991.

\bibitem[Cut88]{Cut88}
Steven Cutkosky.
\newblock Elementary contractions of {G}orenstein threefolds.
\newblock {\em Math. Ann.}, 280(3):521--525, 1988.

\bibitem[Deb01]{Deb01}
Olivier Debarre.
\newblock {\em Higher-dimensional algebraic geometry}.
\newblock Universitext. Springer-Verlag, New York, 2001.

\bibitem[Deb06]{Deb06}
Olivier Debarre.
\newblock On coverings of simple abelian varieties.
\newblock {\em Bull. Soc. Math. France}, 134(2):253--260, 2006.

\bibitem[Dem92]{Dem92}
Jean-Pierre Demailly.
\newblock Regularization of closed positive currents and intersection theory.
\newblock {\em J. Algebraic Geom.}, 1(3):361--409, 1992.

\bibitem[Dem01]{Dem00}
Jean-Pierre Demailly.
\newblock Multiplier ideal sheaves and analytic methods in algebraic geometry.
\newblock In {\em School on {V}anishing {T}heorems and {E}ffective {R}esults in
  {A}lgebraic {G}eometry ({T}rieste, 2000)}, volume~6 of {\em ICTP Lect.
  Notes}, pages 1--148. Abdus Salam Int. Cent. Theoret. Phys., Trieste, 2001.

\bibitem[Dem12]{Dem12}
Jean-Pierre Demailly.
\newblock {\em Analytic methods in algebraic geometry}, volume~1 of {\em
  Surveys of Modern Mathematics}.
\newblock International Press, Somerville, MA, 2012.

\bibitem[DN06]{DN06}
Tien-Cuong Dinh and Vi{\^e}t-Anh Nguy{\^e}n.
\newblock The mixed {H}odge-{R}iemann bilinear relations for compact {K}\"ahler
  manifolds.
\newblock {\em Geom. Funct. Anal.}, 16(4):838--849, 2006.

\bibitem[DP04]{DP04}
Jean-Pierre Demailly and Mihai Paun.
\newblock Numerical characterization of the {K}\"ahler cone of a compact
  {K}\"ahler manifold.
\newblock {\em Ann. of Math. (2)}, 159(3):1247--1274, 2004.

\bibitem[DPS93]{DPS93}
Jean-Pierre Demailly, Thomas Peternell, and Michael Schneider.
\newblock K\"ahler manifolds with numerically effective {R}icci class.
\newblock {\em Compositio Math.}, 89(2):217--240, 1993.

\bibitem[DPS94]{DPS94}
Jean-Pierre Demailly, Thomas Peternell, and Michael Schneider.
\newblock Compact complex manifolds with numerically effective tangent bundles.
\newblock {\em J. Algebraic Geom.}, 3(2):295--345, 1994.

\bibitem[DPS96]{DPS96}
Jean-Pierre Demailly, Thomas Peternell, and Michael Schneider.
\newblock Compact {K}\"ahler manifolds with {H}ermitian semipositive
  anticanonical bundle.
\newblock {\em Compositio Math.}, 101(2):217--224, 1996.

\bibitem[Dru11]{Dru11}
St{\'e}phane Druel.
\newblock Quelques remarques sur la d\'ecomposition de {Z}ariski divisorielle
  sur les vari\'et\'es dont la premi\`ere classe de {C}hern est nulle.
\newblock {\em Math. Z.}, 267(1-2):413--423, 2011.

\bibitem[Fan06]{Fan06}
Fuquan Fang.
\newblock K{\"a}hler manifolds with numerically effective {R}icci class and
  maximal first {B}etti number are tori.
\newblock {\em C. R. Math. Acad. Sci. Paris}, 342(6):411--416, 2006.

\bibitem[Fuj11]{Fuj11}
Osamu Fujino.
\newblock On {K}awamata's theorem.
\newblock In {\em Classification of algebraic varieties}, EMS Ser. Congr. Rep.,
  pages 305--315. Eur. Math. Soc., Z\"urich, 2011.

\bibitem[Ful98]{Ful98}
William Fulton.
\newblock {\em Intersection theory}, volume~2 of {\em Ergebnisse der Mathematik
  und ihrer Grenzgebiete.}
\newblock Springer-Verlag, Berlin, second edition, 1998.

\bibitem[Gro90]{Gro}
M.~Gromov.
\newblock Convex sets and {K}\"ahler manifolds.
\newblock In {\em Advances in differential geometry and topology}, pages 1--38.
  World Sci. Publ., Teaneck, NJ, 1990.

\bibitem[Har77]{Har77}
Robin Hartshorne.
\newblock {\em Algebraic geometry}.
\newblock Springer-Verlag, New York, 1977.
\newblock Graduate Texts in Mathematics, No. 52.

\bibitem[HL97]{HL97}
Daniel Huybrechts and Manfred Lehn.
\newblock {\em The geometry of moduli spaces of sheaves}.
\newblock Aspects of Mathematics, E31. Friedr. Vieweg \& Sohn, Braunschweig,
  1997.

\bibitem[H{\"o}r10]{a8}
Andreas H{\"o}ring.
\newblock Positivity of direct image sheaves---a geometric point of view.
\newblock {\em Enseign. Math. (2)}, 56(1-2):87--142, 2010.

\bibitem[Kaw85]{Kaw85}
Yujiro Kawamata.
\newblock Minimal models and the {K}odaira dimension of algebraic fiber spaces.
\newblock {\em J. Reine Angew. Math.}, 363:1--46, 1985.

\bibitem[KM98]{KM98}
J{\'a}nos Koll{\'a}r and Shigefumi Mori.
\newblock {\em Birational geometry of algebraic varieties}, volume 134 of {\em
  Cambridge Tracts in Mathematics}.
\newblock Cambridge University Press, Cambridge, 1998.
\newblock With the collaboration of C. H. Clemens and A. Corti.

\bibitem[Kob87]{Kob87}
Shoshichi Kobayashi.
\newblock {\em Differential geometry of complex vector bundles}, volume~15 of
  {\em Publications of the Mathematical Society of Japan}.
\newblock Princeton University Press, Princeton, NJ, 1987.
\newblock Kan{\^o} Memorial Lectures, 5.

\bibitem[Kol86]{Kol86}
J{\'a}nos Koll{\'a}r.
\newblock Higher direct images of dualizing sheaves. {I}.
\newblock {\em Ann. of Math. (2)}, 123(1):11--42, 1986.

\bibitem[Laz04a]{Laz04a}
Robert Lazarsfeld.
\newblock {\em Positivity in algebraic geometry. {I}}, volume~48 of {\em
  Ergebnisse der Mathematik und ihrer Grenzgebiete.}
\newblock Springer-Verlag, Berlin, 2004.
\newblock Classical setting: line bundles and linear series.

\bibitem[Laz04b]{Laz04b}
Robert Lazarsfeld.
\newblock {\em Positivity in algebraic geometry. {II}}, volume~49 of {\em
  Ergebnisse der Mathematik und ihrer Grenzgebiete.}
\newblock Springer-Verlag, Berlin, 2004.
\newblock Positivity for vector bundles, and multiplier ideals.

\bibitem[LTZZ10]{LTZZ10}
Steven Lu, Yuping Tu, Qi~Zhang, and Quan Zheng.
\newblock On semistability of {A}lbanese maps.
\newblock {\em Manuscripta Math.}, 131(3-4):531--535, 2010.

\bibitem[Mat02]{Mat02}
Kenji Matsuki.
\newblock {\em Introduction to the {M}ori program}.
\newblock Universitext. Springer-Verlag, New York, 2002.

\bibitem[Miy83]{Miy83}
Masayoshi Miyanishi.
\newblock Algebraic methods in the theory of algebraic threefolds.
\newblock In {\em Algebraic varieties and analytic varieties ({T}okyo, 1981)},
  volume~1 of {\em Adv. Stud. Pure Math.}, pages 69--99. North-Holland,
  Amsterdam, 1983.

\bibitem[Miy87]{Miy87}
Yoichi Miyaoka.
\newblock The {C}hern classes and {K}odaira dimension of a minimal variety.
\newblock In {\em Algebraic geometry, {S}endai, 1985}, volume~10 of {\em Adv.
  Stud. Pure Math.}, pages 449--476. North-Holland, Amsterdam, 1987.

\bibitem[MP97]{MP97}
Yoichi Miyaoka and Thomas Peternell.
\newblock {\em Geometry of higher-dimensional algebraic varieties}, volume~26
  of {\em DMV Seminar}.
\newblock Birkh\"auser Verlag, Basel, 1997.

\bibitem[MP08]{MP08}
Shigefumi Mori and Yuri Prokhorov.
\newblock On {$\Bbb Q$}-conic bundles.
\newblock {\em Publ. Res. Inst. Math. Sci.}, 44(2):315--369, 2008.

\bibitem[Muk81]{Muk81}
Shigeru Mukai.
\newblock Duality between {$D(X)$} and {$D(\hat X)$} with its application to
  {P}icard sheaves.
\newblock {\em Nagoya Math. J.}, 81:153--175, 1981.

\bibitem[Nak04]{Nak04}
Noboru Nakayama.
\newblock {\em Zariski-decomposition and abundance}, volume~14 of {\em MSJ
  Memoirs}.
\newblock Mathematical Society of Japan, Tokyo, 2004.

\bibitem[P{\v a}u12]{Pau12}
Mihai P{\v a}un.
\newblock Relative adjoint transcendental classes and {A}lbanese maps of
  compact {K}{\"a}hler manifolds with nef {R}icci curvature.
\newblock {\em arXiv preprint}, 1209.2195, 2012.

\bibitem[Pet12]{Pet12}
Thomas Peternell.
\newblock Varieties with generically nef tangent bundles.
\newblock {\em J. Eur. Math. Soc. (JEMS)}, 14(2):571--603, 2012.

\bibitem[PS98]{PS98}
Thomas Peternell and Fernando Serrano.
\newblock Threefolds with nef anticanonical bundles.
\newblock {\em Collect. Math.}, 49(2-3):465--517, 1998.
\newblock Dedicated to the memory of Fernando Serrano.

\bibitem[Rei94]{Rei94}
Miles Reid.
\newblock Nonnormal del {P}ezzo surfaces.
\newblock {\em Publ. Res. Inst. Math. Sci.}, 30(5):695--727, 1994.

\bibitem[Sak84]{Sak84}
Fumio Sakai.
\newblock Weil divisors on normal surfaces.
\newblock {\em Duke Math. J.}, 51(4):877--887, 1984.

\bibitem[Sar82]{Sar82}
Victor~G. Sarkisov.
\newblock On conic bundle structures.
\newblock {\em Izv. Akad. Nauk SSSR Ser. Mat.}, 46(2):371--408, 432, 1982.

\bibitem[Sim92]{Sim92}
Carlos~T. Simpson.
\newblock Higgs bundles and local systems.
\newblock {\em Inst. Hautes \'Etudes Sci. Publ. Math.}, (75):5--95, 1992.

\bibitem[Uen87]{Uen87}
Kenji Ueno.
\newblock On compact analytic threefolds with nontrivial {A}lbanese tori.
\newblock {\em Math. Ann.}, 278(1-4):41--70, 1987.

\bibitem[Ver04]{Ver04}
Misha Verbitsky.
\newblock Coherent sheaves on generic compact tori.
\newblock In {\em Algebraic structures and moduli spaces}, volume~38 of {\em
  CRM Proc. Lecture Notes}, pages 229--247. Amer. Math. Soc., Providence, RI,
  2004.

\bibitem[Zha96]{Zha96}
Qi~Zhang.
\newblock On projective manifolds with nef anticanonical bundles.
\newblock {\em J. Reine Angew. Math.}, 478:57--60, 1996.

\bibitem[Zha05]{Zha05}
Qi~Zhang.
\newblock On projective varieties with nef anticanonical divisors.
\newblock {\em Math. Ann.}, 332(3):697--703, 2005.

\end{thebibliography}
\end{document}